\begin{document}

\renewcommand{\theenumi}{\rm (\roman{enumi})}
\renewcommand{\labelenumi}{\rm \theenumi}

\newtheorem{thm}{Theorem}[section]
\newtheorem{defi}[thm]{Definition}
\newtheorem{lem}[thm]{Lemma}
\newtheorem{prop}[thm]{Proposition}
\newtheorem{cor}[thm]{Corollary}
\newtheorem{exam}[thm]{Example}
\newtheorem{conj}[thm]{Conjecture}
\newtheorem{rem}[thm]{Remark}

\newcommand{\id}{\mathop{\text{\rm id}}}
\newcommand{\supp}{\mathop{\text{\rm supp}}}
\newcommand{\esssup}{\mathop{\rm ess\,sup}}

\title{Stochastic quantization associated with the $\exp(\Phi)_2$-quantum 
field model driven by space-time white noise on the torus}

\author{Masato Hoshino\hspace{0.7mm}$^{a)}$, Hiroshi Kawabi\hspace{0.7mm}$^{b)}$ and 
Seiichiro Kusuoka\hspace{0.7mm}$^{c)}$
\vspace{5mm}\\
\normalsize $^{a)}$ Faculty of Mathematics, Kyushu University,\\
\normalsize 744 Motooka, Nishi-ku, Fukuoka 819-0395, Japan\\
\normalsize e-mail address: {\tt{hoshino@math.kyushu-u.ac.jp}} \vspace{5mm}\\
\normalsize $^{b)}$ Department of Mathematics, Hiyoshi Campus, Keio University,\\
\normalsize  4-1-1 Hiyoshi, Kohoku-ku, Yokohama 223-8521, Japan \\
\normalsize e-mail address: {\tt{kawabi@keio.jp}} \vspace{5mm}\\
\normalsize $^{c)}$ Department of Mathematics, Graduate School of Science, Kyoto University,\\
\normalsize Kitashirakawa Oiwakecho, Sakyo-ku, Kyoto 606-8502, Japan\\
\normalsize e-mail address: {\tt{kusuoka@math.kyoto-u.ac.jp}}}
\maketitle

\begin{abstract}
We consider 
a quantum field model with 
exponential interactions on the two-dimensional torus, which is called 
the $\exp (\Phi)_{2}$-quantum field model or
H{\o}egh-Krohn's model.
In the present paper, we study the stochastic quantization of this model by 
singular stochastic partial differential equations, which is recently developed. 
By the method, we construct a unique time-global solution 
and the invariant probability measure of the corresponding stochastic quantization equation, 
and identify it with
an infinite-dimensional diffusion process, which has been constructed
by the Dirichlet form approach.
\end{abstract}
\section{Introduction}
\subsection{Background}
In recent years, there has been a growing interest in 
stochastic partial differential equations (SPDEs in short)
arising in various models of
Euclidean quantum field theory, hydrodynamics, statistical mechanics and so on.
One of the principal themes
in studies of SPDEs
is to construct a dynamical system whose invariant measure
is a given Gibbs measure on an infinite-dimensional state space 
through SPDEs.
In \cite{PW81}, Parisi and Wu proposed such a program for 
Euclidean quantum field theory, and this program is 
now called the
{\it{stochastic quantization}}. 
For a concise overview on the stochastic quantization,
we refer to \cite{AMR15, AK19, ADG19} and references therein.

In Euclidean quantum field theory, the $\Phi^{2m}_{2}$-{\it{quantum field}} ($m=2,3,\ldots$), 
a special case of the $P(\Phi)_{2}$-{\it{quantum fields}} in finite volume, is one of the most important objects
(see e.g., \cite{GJ86, Sim74}). It is a probability measure
on ${\mathcal D}'(\Lambda)$,
the space of distributions 
on the two-dimensional torus $\Lambda={\mathbb T}^{2}=
(\mathbb R/ 2\pi \mathbb Z)^{2}$, which is formally given by the expression
\begin{equation}
\mu^{(2m)}_{\sf pol}(d\phi)
\varpropto
\exp \Big \{ -
\frac{1}{2} 
\int_{\Lambda} 
\Big( m_{0}^{2}\phi(x)^{2}+\vert \nabla \phi(x) \vert
^{2} +2\phi(x)^{2m} \Big )dx
\Big \}
\prod_{x\in {\Lambda}} d\phi(x),
\label{formal-free}
\end{equation}
where $m_{0}>0$ and $\Delta$ are
mass and
the Laplacian in $L^{2}(\Lambda)$ with periodic boundary conditions, 
respectively. However, we need a
renormalization procedure for $\phi^{2m}$ to give a rigorous meaning to 
(\ref{formal-free}) because the power $\phi^{2m}$
is not defined for
$\phi\in {\mathcal D}'(\Lambda)$ in general.
We introduce the massive {\it{Gaussian free field}} $\mu_{0}$ by 
the Gaussian measure on ${\mathcal D}'(\Lambda)$
with zero mean and the covariance operator $(m_{0}^{2}-\Delta)^{-1}$, and 
replace $\phi^{2m}$ by the $2m$-th order
Wick power 
$(\phi^{2m})^{\diamond}$
with respect to $\mu_{0}$. 
Then the $\Phi^{2m}_{2}$-quantum field 
$\mu^{(2m)}_{\sf pol}$ is rigorously defined by
\begin{equation}
\mu^{(2m)}_{\sf pol}(d\phi)=\frac{1}{{Z}_{2m}}
\exp \Big (
- \int_{\Lambda}
(\phi^{2m})^{\diamond}(x) 
dx \Big )
\mu_{0}(d\phi), 
\label{Pphi2}
\end{equation}
where 
$Z_{2m}>0$ is the normalizing constant given by
\begin{equation}
Z_{2m}=\int_{ {\mathcal D}'(\Lambda)} \exp 
\Big(-\int_{\Lambda} 
(\phi^{2m})^{\diamond}(x) 
dx \Big) \mu_{0}(d\phi).
\nonumber
\end{equation}
Parisi and Wu \cite{PW81} first proposed 
an SPDE 
\begin{equation}
\partial_{t}\Phi_{t}(x)=
\frac{1}{2}(\Delta-m_{0}^{2})\Phi_{t}(x)
-m
(\Phi^{2m-1}_{t})^{\diamond}(x)
+
{\dot W}_{t}(x), 
\quad t>0,~x\in \Lambda,
\label{PW-SQE}
\end{equation}
which realizes the stochastic quantization for 
$\mu^{(2m)}_{\sf pol}$ by heuristic calculations. 
Here
$( \dot W_{t} )_{t\geq 0}$ is an $\mathbb R$-valued Gaussian
space-time white noise, that is, the time derivative of 
a standard $L^{2}(\Lambda)$-cylindrical Brownian motion
$\{ W_{t}=(W_{t}(x) )_{x\in {\Lambda}}\}_{t\geq 0}$.
This SPDE is called the ($P(\Phi)_{2}$-){\it{stochastic quantization equation}}.
Due to the singularity of the
nonlinear drift term, 
the interpretation and construction of a solution to this 
SPDE have been a challenging problem for many years.
In \cite{JM85}, 
Jona-Lasinio and Mitter studied
a modified SPDE
\begin{equation}\label{JM-SQE}\begin{array}{rl}
\partial_{t}\Phi_{t}(x)=&\displaystyle
-\frac{1}{2}(m_{0}^{2}-\Delta)^{\varepsilon} \Phi_{t}(x)
-m(m_{0}^{2}-\Delta)^{\varepsilon-1}
(\Phi^{2m-1}_{t})^{\diamond}(x)
\\
&\displaystyle
+(m_{0}^{2}-\Delta)^{\frac{\varepsilon-1}{2}}
{\dot W}_{t}(x), 
\quad t>0,
~x\in \Lambda,
\end{array}
\end{equation}
where $\varepsilon$ is a sufficiently small positive constant.
Note that $\mu^{(2m)}_{\sf pol}$ is also an invariant measure of (\ref{JM-SQE}).
This modification allows 
smoothing of both the nonlinear drift term and the driving noise term, and thus they could apply
the Girsanov transform for constructing a
solution to (\ref{JM-SQE}) in the weak sense.
Since then, there has been a large number of follow-up papers on 
the modified SPDE (\ref{JM-SQE}),
and 
both theories of SPDEs and Dirichlet forms on infinite-dimensional state spaces have been developed intensively 
(see e.g., \cite{BCM88, AR90, AR91, ARZ93, HK93, GG96}).

On the other hand, 
the Girsanov transform approach
does not work efficiently for solving the original SPDE (\ref{PW-SQE}) (i.e., the 
modified SPDE (\ref{JM-SQE}) in the case $\varepsilon=1$)
due to the singularity of the nonlinear drift term.
Applying the Dirichlet form theory,
Albeverio and R\"ockner \cite{AR91}
constructed a diffusion process 
solving (\ref{PW-SQE}) 
in the weak sense. 
Besides, Mikulevicius and Rozovskii \cite{MR99} 
developed their compactness method for SPDEs and
constructed martingale solutions of (\ref{JM-SQE}) for all
$0<\varepsilon \leq 1$. They also proved uniqueness in law for all $0<\varepsilon<1$.
Later in \cite{DPD03}, Da Prato and Debussche 
constructed a unique global solution to (\ref{PW-SQE}) 
in the strong probabilistic sense by splitting the original SPDE 
(\ref{PW-SQE}) into the Ornstein-Uhlenbeck process and the shifted equation. Since the
solution of the shifted equation is much smoother than the Ornstein-Uhlenbeck process,
they could solve the shifted equation by a fixed point argument on a suitable Besov space.
Their approach is now called the {\it{Da Prato--Debussche argument}}, and 
was applied to the infinite volume case in \cite{MW17a}.
In a recent paper \cite{RZZ17a}, R\"ockner, Zhu and Zhu 
obtained both restricted Markov uniqueness of the generator 
and the uniqueness of the martingale solution to (\ref{PW-SQE}) by
identifying
the solution 
obtained in \cite{DPD03} with one
obtained by the Dirichlet form approach. 

We should mention here that the 
{\it{$\Phi^{4}_{3}$-quantum field model}}  
in finite volume, heuristically given by
(\ref{Pphi2}) with $\Lambda={\mathbb T}^{3}=(\mathbb R/ 2\pi \mathbb Z)^{3}$ and 
$m=2$, has also been received
a lot of attention in the Euclidean quantum field theory. 
To make a rigorous meaning to the three-dimensional version of 
the probability measure $\mu^{(4)}_{\sf pol}(d\phi)$, 
we need a further renormalization procedure beyond the Wick renormalization (see e.g., \cite{BFS83} and references
therein).
For this reason, the stochastic quantization equation associated with the 
$\Phi^{4}_{3}$-quantum field model (i.e.,
the three-dimensional version of the SPDE (\ref{PW-SQE}) with $m=2$)
has not been
studied satisfactorily for a long time. After
Hairer's groundbreaking work on {\it{regularity structures}} \cite{Hai14}
and the related work, called {\it{paracontrolled calculus}}, due to Gubinelli, Imkeller and Perkowski \cite{GIP15},
there has arisen a renewed field 
of singular SPDEs, and now the 
$\Phi^{4}_{3}$-stochastic quantization equation is studied intensively by applying these
new methods (see e.g., \cite{CC18, MW17b, AK19, GH19} for recent developments on 
the $\Phi^{4}_{3}$-stochastic quantization equation).

In the present paper, we consider
a quantum field model in two-dimensional finite volume, which is different from
the $P(\Phi)_{2}$-model. 
This model also leads to interesting relativistic quantum fields, and 
was introduced by H\o egh-Krohn
\cite{Hoe71} in a Hamiltonian setting. Later its Euclidean version was constructed by Albeverio and 
H\o egh-Krohn \cite{AH74}.
In the latter paper, the {\it{${\rm{exp}}(\Phi)_{2}$-quantum field}}
\begin{equation}
\mu^{(\alpha)}_{\sf exp}(d\phi)=\frac{1}{Z^{(\alpha)}}
\exp \Big (
- \int_{\Lambda} \exp^{\diamond} ( \alpha \phi)(x) 
dx \Big )
\mu_{0}(d\phi)
\label{exp-Gibbs}
\end{equation}
was constructed and shown
to yield interesting relativistic quantum fields, where
$\Lambda={\mathbb T}^{2}$, 
$Z^{(\alpha)}>0$ is the normalizing constant,
$\alpha \in 
( -{\sqrt{4\pi}}, {\sqrt{4\pi}})$ is called the {\it{charge parameter}} and the {\it{Wick exponential}} 
$\exp^{\diamond} ( \alpha \phi)(x)$
is formally introduced by the expression
$$ \exp ^{\diamond} (\alpha \phi)(x)
\hspace{0.5mm}
=\exp \Big( \alpha \phi(x)-\frac{\alpha^{2}}{2} {\mathbb E}^{\mu_{0}} [ \phi(x)^{2}] \Big), \qquad x\in \Lambda.
$$
Here the diverging term ${\mathbb E}^{\mu_{0}} [ \phi(x)^{2}]$
plays a role of the Wick renormalization. Note that a random measure
$ \nu^{(\alpha)}_{\phi}(dx):= \exp ^{\diamond} (\alpha \phi)(x)dx$ 
on $\Lambda$ is called the {\it{Gaussian mulptiplicative chaos}}, which plays a central role in the theory of 
Liouville quantum gravity.
A connection between the ${\rm exp}(\Phi)_{2}$-quantum field model and problems
in representation theory of groups of mappings has been discussed in \cite{AHT81}. Recently,
the relevance of this model was rediscovered in connection 
with topics like Liouville quantum gravity and stochastic Ricci flow. See e.g., \cite{Kah85, RV14, DS11, DS19}
and references therein. 

The main purpose of the present paper is to study a parabolic SPDE 
\begin{equation}
\partial_{t}\Phi_{t}(x)=
\frac{1}{2}(\Delta-m_{0}^{2})\Phi_{t}(x)
-\frac{\alpha}{2} \exp^{\diamond} ( \alpha \Phi_{t}(x) )
+
{\dot W}_{t}(x), 
\qquad t>0,~x\in \Lambda,
\label{exp-SQE}
\end{equation}
which realizes the stochastic quantization for the 
${\rm{exp}}(\Phi)_{2}$-quantum field
$\mu^{(\alpha)}_{\sf exp}$. In the paper \cite{AR91} mentioned above, 
Albeverio and R\"ockner treated not only the $P(\Phi)_{2}$-case but also
the ${\rm{exp}}(\Phi)_{2}$-case, and they solved 
(\ref{exp-SQE}) weakly under $\vert \alpha \vert <{\sqrt{4\pi}}$
by using the Dirichlet form theory. 
By following the Girsanov transform approach in \cite{GG96}, 
Mihalache \cite{Mih06} constructed a unique probabilistically weak solution
to a modified SPDE 
\begin{equation}
\label{exp-modifySQE}
\begin{array}{rl}
\partial_{t}\Phi_{t}(x)=&{\displaystyle{
-\frac{1}{2}(m_{0}^{2}-\Delta)^{\varepsilon} \Phi_{t}(x)
-\frac{\alpha}{2}(m_{0}^{2}-\Delta)^{\varepsilon-1} \exp^{\diamond} ( \alpha \Phi_{t}(x) )}}
\\
&
{\displaystyle{
+(m_{0}^{2}-\Delta)^{\frac{\varepsilon-1}{2}}
{\dot W}_{t}(x), 
\qquad t>0,
~x\in \Lambda,}}
\end{array}
\end{equation}
under 
restrictive conditions on $0<\varepsilon<1$ and the charge parameter $\alpha$.
Strong uniqueness
of the generator of the modified SPDE
(\ref{exp-modifySQE}) was also discussed in \cite{AKMR19}. 
Nevertheless, to our best knowledges, there were few papers which study the original SPDE
(\ref{exp-SQE}).
Quite recently, influenced by the recent development of singular SPDEs,
Garban \cite{Gar18} studied (\ref{exp-SQE}) with $m_{0}=0$ (i.e., massless case).
Under a stronger condition 
than $\vert \alpha \vert<{\sqrt{4\pi}}$,
he constructed a unique strong solution to (\ref{exp-SQE}). 
See Remark \ref{Garban} below
for a detailed comparison with our results.
We should mention that elliptic SPDEs which also realize
the $\exp (\Phi )_2$-quantum field model were studied in e.g., \cite{AY02, ADG19}.
We further note that a much deeper analysis is possible for 
the ${\rm exp}(\Phi)_{1}$-stochastic quantization equation 
(i.e., the one-dimensional version of (\ref{exp-SQE})) because 
of non-necessity of renormalization. Uniqueness of both
the strong solution to the ${\rm exp}(\Phi)_{1}$-stochastic quantization equation 
and the corresponding generator have been proven in \cite{AKR12}.

In the present paper, 
we construct the time-global and pathwise-unique solution to the original SPDE 
\eqref{exp-SQE} by the
Da Prato-Debussche argument under $\vert \alpha \vert <\sqrt{4\pi}$. (It is easy to see that our argument in the present paper also works in the case of the modified 
SPDE (\ref{exp-modifySQE}). We omit it.) The key idea is that we regard the Wick exponentials of the Ornstein-Uhlenbeck process 
as an $L^2$-function in time and construct estimates.
The Wick exponentials of the Ornstein-Uhlenbeck process appears as an input of the solution map to the shifted equation.
To apply regularity structures or paracontrolled calculus we usually assume that the inputs are $B_{\infty ,\infty}^s$-valued processes.
However, the Wick exponentials of the Ornstein-Uhlenbeck process does not satisfy the condition.
Moreover, the nonlinear term of \eqref{exp-SQE} has exponential growth.
Hence, the SPDE \eqref{exp-SQE} is out of results by the general theories.
We do not construct any contraction map for the existence and uniqueness of the solution, but just prepare some estimates, directly construct the time-global solution and obtain the pathwise uniqueness.
By the uniqueness we also have the identification of the solution with the limit of the solutions to the stochastic quantization equations generated by the approximating measures to the 
$\exp (\Phi )_2$-measure, and with the process obtained by the Dirichlet form approach in \cite{AR91}.
The detail of the results are stated in Section \ref{sec:mainthm}.

Before closing this subsection, we would like to emphasize that
the $\exp (\Phi )_2$-quantum field model can be regarded as a model interpolating between the $\Phi ^4_2$-model and the $\Phi ^4_3$-model in the following sense. 
When we consider the shifted equation of 
\eqref{exp-SQE},
then the Wick exponential of the Ornstein-Uhlenbeck process appears as 
a coefficient and it is a $W^{-\alpha ^2/4\pi -\varepsilon ,2}$-valued process. 
On the other hand, in the case of the $\Phi^4_d$-model, the second-order Wick polynomial 
of the Ornstein-Uhlenbeck process appears as the most singular coefficient and it is a 
$W^{-d+2-\varepsilon ,\infty }$-valued process. By comparing the singularities of the coefficient, 
we have a relation $-\alpha ^2/4\pi = -d+2$. In view of the relation, formally $\alpha = \sqrt{4\pi}$ 
in the $\exp (\Phi )_2$-model associated to the $\Phi ^4_3$-model and $\alpha =0$ associated to 
the $\Phi ^4_2$-model. We remark that the relation is only based on the singularities and 
the integrability is ignored. The 
{\it{sine-Gordon quantum field model}}, which was studied in e.g., \cite{Fro76, FP77}, and recently, 
its stochastic quantization equation was also studied in \cite{HS16, CHS18} by applying regularity structures.
In terms of the singularities, the stochastic quantization equation for
the sine-Gordon model is same as one of the $\exp (\Phi)_{2}$-model.
However, sine functions are bounded and have bounded derivatives, while exponential functions are unbounded 
and the derivatives are also unbounded. This is another reason why we study the stochastic 
quantization of exponential models 
in the present paper by using a completely 
different argument from the one by \cite{HS16, CHS18}.
\subsection{Settings and main theorems}\label{sec:mainthm}
We begin with introducing some notations and objects. 
Throughout the paper, we fix $m_{0}=1$ for the simplicity of notation.
Let $\Lambda=\mathbb{T}^2=(\mathbb{R}/2\pi\mathbb{Z})^2$ be the two-dimensional torus
equipped the Lebesgue measure $dx$. Let $L^{2}(\Lambda; {\mathbb K})$ 
(${\mathbb K}={\mathbb R}, {\mathbb C}$) be the Hilbert space consisting all $\mathbb K$-valued
Lebesgue square integrable functions equipped with the usual inner product
$$ \langle f,g \rangle=\int_{\Lambda} f(x){\overline{g(x)}}dx, \quad f,g\in L^{2}(\Lambda; {\mathbb K}).$$
For ${k}=(k_{1}, k_{2}) \in {\mathbb Z}^{2}$ and $x=(x_{1}, x_{2}) \in \Lambda$,
we write $\vert k \vert=(k_{1}^{2}+k_{2}^{2})^{1/2}$ and
$k\cdot x=k_{1}x_{1}+k_{2}x_{2}$.
Since $C^{\infty}(\Lambda; {\mathbb K}) \subset L^{2}(\Lambda; {\mathbb K}) \subset {\mathcal D}'(\Lambda;\mathbb K)$,
the $L^{2}$-inner product $\langle \cdot, \cdot \rangle$ is naturally extended to the pairing of $C^{\infty}(\Lambda; \mathbb K)$ and its dual space ${\mathcal D}'(\Lambda;\mathbb K)$.
Let 
$\{ 
e_{k}; k \in \mathbb Z^{2} \}$ be the usual complete 
orthonormal system (CONS) of $L^{2}(\Lambda; \mathbb R)$
consisting of
$e_{(0,0)}(x)=(2\pi)^{-1}$ and
\begin{equation*}
%
e_{k}(x)
=
\frac{1}{{\sqrt 2}\pi}
\begin{cases}
\displaystyle{
\cos(k\cdot x)
},
& \text{ $k\in {\mathbb Z}^{2}_{+}$}
\\ 
\displaystyle{
\sin(k\cdot x)
},
& \text{ $k\in {\mathbb Z}^{2}_{-}$},
\end{cases}
\label{CONS}
\end{equation*}
where ${\mathbb Z}^{2}_{+}=\{ (k_{1}, k_{2}) \in {\mathbb Z}^{2} \vert \hspace{0.5mm} k_{1}>0 \} 
\cup \{ (0, k_{2}) \vert \hspace{0.5mm} k_{2}>0 \}$ and ${\mathbb Z}^{2}_{-}=-{\mathbb Z}^{2}_{+}$.
Although we work in the framework of real-valued functions, it is sometimes easier to do computations by using
the corresponding complex basis
$$
{\bf e}_{k}(x)=\frac{1}{2\pi}e^{{\sqrt{-1}}k\cdot x}, \quad  k\in {\mathbb Z^{2}},~x\in \Lambda. 
$$
For $s\in {\mathbb R}$, we define the Sobolev space of order $s$ with periodic boundary condition
by
$$ 
H^{s}=H^{s}(\Lambda)=\left\{ u\in {\mathcal D}'(\Lambda; {\mathbb R}) ; \sum_{k \in {\mathbb Z}^{2}} (1+\vert k \vert^{2})^{s} \vert \langle u, {\bf e}_{k} \rangle \vert^{2}<\infty
\right\},
$$
This space is a Hilbert space equipped with the inner product
$$ { (}u,v{ )}_{H^{s}}:=
\sum_{k \in {\mathbb Z}^{2}} (1+\vert k \vert^{2})^{s}   
\langle u, {\bf e}_{k} \rangle
{\overline{\langle v, {\bf e}_{k} \rangle}}   , 
\qquad u,v \in H^{s}.$$

We define the massive Gaussian free field measure $\mu_{0}$ by the centered Gaussian measure 
on $\mathcal{D}'(\Lambda)$ with covariance $(1-\triangle)^{-1}$, that is, determined by the formula
$$
\int_{\mathcal{D}'(\Lambda)}\langle \phi,{\bf e}_k\rangle \overline{\langle \phi,{\bf e}_{\ell}\rangle}\mu_0(d\phi)
=(1+|k|^2)^{-1}\mathbf{1}_{k=\ell},\quad
k,\ell\in\mathbb{Z}^2,
$$
where $\Delta$ is the Laplacian acting on $L^{2}(\Lambda)$ with periodic boundary condition.
Note that this formula implies
$$ \int_{\mathcal{D}'(\Lambda)}\|\phi\|_{H^{-\varepsilon}}^2\mu_0(d\phi)<\infty, \quad
\varepsilon>0, $$
and thus the Gaussian free field measure $\mu_0$ has a full support on $H^{-\varepsilon}(\Lambda)$.
For a charge parameter $\alpha \in (-{\sqrt{4\pi}}, {\sqrt{4\pi}})$,
we then define the \emph{$\exp (\Phi) _2$-quantum field} (or the \emph{$\exp (\Phi) _2$-measure}) 
$\mu^{(\alpha)}=\mu_{\sf{exp}}^{(\alpha)}$ on $\mathcal{D}'(\Lambda)$ by
\begin{equation}\label{expphimeas}
\mu^{(\alpha)}(d\phi)=\mu_{\sf{exp}}^{(\alpha)}(d\phi):=\frac{1}{Z^{(\alpha)}} \exp \left ( - \int_\Lambda\exp^\diamond(\alpha \phi )(x)dx\right ) \mu _0 ( d\phi ),
\end{equation}
where
$Z^{(\alpha)}>0$ is the normalizing constant and
$\exp^\diamond(\alpha\cdot)$ is the \emph{Wick exponential} which will be rigorously
constructed 
in Section \ref{sec:OU}. 
Since we prove in Section \ref{sec:OU} that the function $\int_{\Lambda} 
\exp^\diamond(\alpha \phi )(x)dx$ is a positive $L^{2}(\mu_{0})$-function for all $\vert \alpha \vert
<{\sqrt{4\pi}}$, we may also regard 
$\mu^{(\alpha)}$ as a probability measure on $H^{-\varepsilon}(\Lambda)$ (see Corollary \ref{cor:expmeas}).

In the present paper, we consider a \emph{stochastic quantization equation} 
associated with $\exp(\Phi)_2$-measure, that is, a parabolic SPDE given by
\begin{equation}\label{expsqe}
\partial _t \Phi_t (x) = \frac 12 (\triangle-1) \Phi_t (x) - \frac\alpha2 \exp^\diamond(\alpha \Phi_t)(x) + \dot W_t (x) , \quad t>0,\quad x\in \Lambda,
\end{equation}
where $W=\{ W_t(x); t\geq 0, x\in \Lambda \}$ is an $L^{2}(\Lambda)$-cylindrical Brownian motion 
defined on a filtered probability space $(\Omega,\mathcal{F},({\mathcal F}_{t})_{t\geq 0},
\mathbb{P})$. This driving noise is defined 
by the following convenient Fourier series representation
$$ W_{t}(x)=\sum_{k\in {\mathbb Z}^{2}} w^{(k)}_{t} e_{k}(x), \quad t\geq 0, x\in \Lambda,$$
where $\{w^{(k)} \}_{k\in {\mathbb Z}^{2}}$ is a sequence of independent 
one-dimensional $({\mathcal F}_{t})_{t\geq 0}$-Brownian motions starting at $0$.
See \cite[Chapter 4]{DZ92} for details.
The Wick exponential $\exp ^\diamond$ is defined only on almost everywhere with respect to suitable Gaussian measures with supports in distributions and ill-defined for a general distribution.
Hence, the exponential term of the SPDE \eqref{expsqe} is difficult to treat as it is, because $\Phi_t$ takes values in $\mathcal{D}'(\Lambda) \setminus C(\Lambda)$.
For this reason, we first consider 
an approximating equation given by regularizing the white noise $\dot{W}_t$.
Let $\psi$ be a Borel function on ${\mathbb R}^2$ with the following properties.
\begin{itemize}
\item $0\le\psi(x)\le1$ for any $x\in\mathbb{R}^2$.
\item $\psi(x)=\psi(-x)$ for any $x\in\mathbb{R}^2$.
\item $\sup_{x\in\mathbb{R}^2\setminus \{ 0\}} |x|^{-\theta}|\psi (x) -1| <\infty$ for some $\theta\in(0,1)$.
\item $\sup_{x\in\mathbb{R}^2} |x|^m |\psi (x)|<\infty$ for some $m\geq 4$.
\end{itemize}
Note that $\psi$ need not be continuous at $x\neq0$. For example, an indicator function $\psi=\mathbf{1}_K$ is allowed, if $K\subset\mathbb{R}^2$ is compact, $K=-K:=\{-x;x\in K\}$, and $0$ is an interior point of $K$.
For such a cut-off function $\psi$, we define an operator $P_N$ on ${\mathcal D}'(\Lambda )$ by
$$
P_N f (x) = \sum _{k\in {\mathbb Z}^2} \psi (2^{-N} k) \langle f,{\bf{e}}_k \rangle {\bf{e}}_k (x), \quad N\in \mathbb N,~x\in \Lambda.
$$
By the assumption on $\psi$, it is easy to show the following properties.
\begin{itemize}
\item $\|P_Nf\|_{H^{2}}\lesssim2^{4N}\|f\|_{H^{-2}}$. In particular, $P_Nf\in C(\Lambda)$ for $f\in H^{-2}(\Lambda)$ by the Sobolev embedding.
\item $\displaystyle{\lim_{N\to\infty}\|P_Nf-f\|_{H^s}=0}$ for $f\in H^s(\Lambda)$.
\end{itemize}

Then the first result is stated as follows.
\begin{thm}\label{mainthm1}
Let $|\alpha|<\sqrt{4\pi}$ and $\varepsilon>0$.
Let $N\in\mathbb{N}$ and consider the initial value problem
\begin{equation}\label{expsqe1}
\left\{
\begin{aligned}
\partial _t \Phi_t^N &= \frac 12 (\triangle-1) \Phi_t^N 
- \frac\alpha2 \exp\left(\alpha \Phi_t^N-\frac{\alpha^2}2 C_N\right) + P_N\dot W_t ,\\
\Phi_0^N&=P_N\phi,
\end{aligned}
\right.
\end{equation}
where $\phi\in\mathcal{D}'(\Lambda)$ and
$$
C_N:=\frac1{4\pi ^2}\sum_{k\in\mathbb{Z}^2}\frac{\psi(2^{-N}k)^2}{1+|k|^2}.
$$
Then for $\mu_0$-a.e. $\phi\in\mathcal{D}'(\Lambda)$, the unique time-global classical solution $\Phi^N$ converges to an $H^{-\varepsilon}$-valued stochastic process $\Phi$ in the space $C([0,T],H^{-\varepsilon}(\Lambda))$ 
for any $T>0$ $\mathbb P$-almost surely. Moreover, the limit $\Phi$ is independent to the choice of $\psi$.
\end{thm}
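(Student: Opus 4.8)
The plan is to follow the Da Prato--Debussche strategy: decompose $\Phi^N_t = Z^N_t + Y^N_t$, where $Z^N$ is the stationary (or zero-initialized with the $P_N\phi$ shift) Ornstein--Uhlenbeck process solving $\partial_t Z^N = \tfrac12(\triangle-1)Z^N + P_N\dot W$, and $Y^N = \Phi^N - Z^N$ solves the \emph{shifted equation}
\begin{equation*}
\partial_t Y^N_t = \frac12(\triangle-1)Y^N_t - \frac\alpha2 e^{\alpha Y^N_t}\cdot \exp\!\left(\alpha Z^N_t - \frac{\alpha^2}{2}C_N\right),
\end{equation*}
with $Y^N_0 = 0$. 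The first task is to control the random input $:\!e^{\alpha Z^N}\!: \;=\exp(\alpha Z^N_t - \tfrac{\alpha^2}{2}C_N)$. I would invoke the construction from Section~\ref{sec:OU}: because $\operatorname{Var}(Z^N_t(x))\to\operatorname{Var}$ of the free field pointwise and $\alpha^2/(4\pi)<1$, the Wick exponentials $:\!e^{\alpha Z^N}\!:$ converge, as $N\to\infty$, in $L^p(\Omega; L^2([0,T]; W^{-\alpha^2/4\pi-\varepsilon,2}(\Lambda)))$ (and in $L^2$ in time, crucially, rather than uniformly in time), to a limiting process $:\!e^{\alpha Z}\!:$, uniformly over the admissible cut-offs $\psi$; the key analytic inputs are the hypercontractivity/Nelson-type moment bounds for Gaussian chaos and the $L^2(\mu_0)$-integrability established earlier. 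Simultaneously, $Z^N\to Z$ in $C([0,T]; H^{-\varepsilon})$ almost surely along a subsequence, hence (after extracting) for all $N$ via Borel--Cantelli once we have summable tail bounds.

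The second task is the deterministic solution map for the shifted equation. I would set this up as a fixed-point/a priori-estimate problem in a space like $C([0,T]; H^{\delta})\cap L^\infty$, or more robustly in a Besov/H\"older space $C([0,T]; B^{\delta}_{\infty,\infty})$ with $\delta\in(\alpha^2/4\pi,1)$ small, treating $V^N := \;:\!e^{\alpha Z^N}\!:$ as a given coefficient in $L^2([0,T]; W^{-\alpha^2/4\pi-\varepsilon,2})$. The product $e^{\alpha Y^N_t}\cdot V^N_t$ must make sense: since $Y^N$ will be shown to be a continuous function (positive regularity), multiplication by a $W^{-s,2}$ distribution with $s<1$ is well-defined via the standard paraproduct/duality estimate $\|fg\|_{W^{-s,2}}\lesssim \|f\|_{C^{s+\varepsilon}}\|g\|_{W^{-s,2}}$, and then the heat semigroup $e^{t(\triangle-1)/2}$ smooths $W^{-s,2}$ into $H^{2-s-\varepsilon'}\hookrightarrow C^{0}$. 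The \emph{exponential growth} of $y\mapsto e^{\alpha y}$ is the delicate point: I would derive an a priori bound by testing the mild formulation against the sign of $Y^N$ or by a comparison/maximum-principle argument showing $Y^N$ stays bounded on $[0,T]$ with a bound depending only on $\|V^N\|_{L^2_tW^{-s,2}_x}$ (using that the drift $-\tfrac\alpha2 e^{\alpha Y}V$ has a definite sign pushing $Y$ back toward a bounded region when $\alpha>0$, and symmetrically for $\alpha<0$), upgrading to time-global existence rather than just local. This is exactly where the paper says it avoids a contraction map and instead ``directly constructs the time-global solution'': I expect the main obstacle to be closing this a priori estimate uniformly in $N$ without any boundedness of the nonlinearity — one must exploit the one-sided sign structure of $-\tfrac\alpha2 e^{\alpha y}$ together with $L^2$-in-time (not pointwise-in-time) control of $V^N$, so a Gronwall-type argument on $\|Y^N_t\|_{C^\delta}^2$ with the $L^2_t$ norm of $V^N$ appearing linearly after a Young/interpolation splitting of the product term.

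The third task is convergence and identification of the limit. Given uniform-in-$N$ bounds on $Y^N$ in $C([0,T];C^\delta)$ and the convergence $V^N\to V$ in $L^2_tW^{-s,2}_x$ together with $Z^N\to Z$, I would show $(Y^N)$ is Cauchy: writing the equation for $Y^N - Y^M$, the difference of the nonlinear terms splits as $(e^{\alpha Y^N}-e^{\alpha Y^M})V^N + e^{\alpha Y^M}(V^N-V^M)$; the first factor is Lipschitz in $Y$ on the bounded set where the a priori estimate confines the solutions (local Lipschitz constant $\lesssim \alpha e^{\alpha R}$ with $R$ the uniform bound), and the second is handled by the $L^2_t$-convergence of $V^N$, yielding a Gronwall estimate for $\|Y^N-Y^M\|_{C_tC^\delta}\to 0$. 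Then $\Phi^N = Z^N + Y^N \to Z + Y =: \Phi$ in $C([0,T];H^{-\varepsilon})$ almost surely. Independence of the limit from $\psi$ follows because the limiting Wick exponential $:\!e^{\alpha Z}\!:$ is $\psi$-independent (Section~\ref{sec:OU}), $Z$ is $\psi$-independent, and the solution map of the shifted equation is a well-defined deterministic functional of the pair $(Z,:\!e^{\alpha Z}\!:)$ by the uniqueness inherent in the Gronwall/a priori argument. The only genuinely hard piece, as noted, is the uniform time-global a priori bound on $Y^N$ against the exponential nonlinearity; everything else is paraproduct bookkeeping plus Gaussian-chaos estimates already prepared in Sections~\ref{sec:OU}.
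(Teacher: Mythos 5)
Your overall architecture matches the paper: the Da Prato--Debussche splitting, the $L^2$-in-time treatment of the Wick exponential of the OU process, the one-sided sign/comparison argument giving a uniform sup bound $\|e^{\alpha Y^N}\|_{C([0,T];C(\Lambda))}\le e^{|\alpha|\|\upsilon\|_{C(\Lambda)}}$ (this is exactly \eqref{eq:bdUpsN}), and the identification of $\Phi$ as a $\psi$-independent deterministic functional of $(X,\mathcal{X}^{(\exp,\infty)})$. But there is a genuine gap at the point you describe as ``paraproduct bookkeeping'': you propose to make sense of $e^{\alpha Y}\cdot V$ through the estimate $\|fg\|_{W^{-s,2}}\lesssim\|f\|_{C^{s+\varepsilon}}\|g\|_{W^{-s,2}}$, which requires H\"older regularity $s+\varepsilon=\beta+\varepsilon$ of $e^{\alpha Y}$. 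The Schauder gain from a forcing in $L^2([0,T];H^{-\beta})$ caps the regularity of $Y$ at $H^{1+\delta}$ with $\delta<1-\beta$, hence H\"older exponent strictly below $1-\beta$. For $\beta\ge 1/2$, i.e.\ $|\alpha|\ge\sqrt{2\pi}$, one has $1-\beta\le\beta$ and the paraproduct pairing is simply not available; this is precisely the obstruction that confines Garban-type fixed-point arguments to a strictly smaller range of $\alpha$, whereas the theorem claims the full range $|\alpha|<\sqrt{4\pi}$. The missing idea is the one the paper builds Section \ref{sec:nnegdist} around: the Wick exponential is a \emph{nonnegative} distribution, hence a measure (Theorem \ref{thm:LL}), and multiplication by a merely continuous function obeys $\|\mathcal{M}(f,\xi)\|_{B^{-s}_{p,q}}\le C\|f\|_{C(\Lambda)}\|\xi\|_{B^{-s}_{p,q}}$ via positivity of the heat kernel (Theorem \ref{thm:func*dist}), together with the continuity statements of Theorems \ref{thm:stableM} and \ref{replaced keythm}. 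Only with this sup-norm product estimate does your $L^\infty$ a priori bound close the argument for all $|\alpha|<\sqrt{4\pi}$.

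A secondary, more structural difference: the paper does not run a contraction or Gronwall scheme at all. Existence is obtained by mollifying $\mathcal{X}$, using the comparison bound and Schauder estimates to get uniform bounds, and extracting a convergent subsequence via the compact embedding of Lemma \ref{Simon}; passage to the limit in the nonlinearity again uses Theorem \ref{replaced keythm}. Uniqueness is a separate $L^2$-energy argument (Lemma \ref{lem:unique}) exploiting the monotonicity $\alpha(e^{\alpha x}-e^{\alpha y})(x-y)\ge0$ tested against the nonnegative measure $\mu_{\mathcal{X}_s}$, which never needs a Lipschitz constant of the nonlinearity paired with the distributional coefficient. Your proposed Cauchy/Gronwall estimate for $Y^N-Y^M$ in $C_tC^\delta$ would also run into trouble even granting the positivity product estimate: the heat-semigroup smoothing from $H^{-\beta}$ to a space of continuous functions costs $(t-s)^{-(1+\beta+\epsilon)/2}$, whose square is not integrable in time, so the kernel cannot be paired with the merely $L^2$-in-time input by Cauchy--Schwarz, and one would need an iterated singular-Gronwall argument. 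The compactness-plus-monotone-uniqueness route of the paper sidesteps this entirely.
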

We call the ${\Phi}$ obtained in Theorem \ref{mainthm1} the \emph{strong solution} of the SPDE \eqref{expsqe} with the initial value $\phi$.

\begin{rem}
Since the $\exp(\Phi)_2$-measure $\mu^{(\alpha)}$ is absolutely continuous with respect to $\mu_0$ 
under $|\alpha|<\sqrt{4\pi}$ (see Corollary \ref{cor:expmeas}), 
the phrase ``$\mu_0$-a.e. $\phi$" can be replaced by ``$\mu^{(\alpha)}$-a.e. $\phi$".
\end{rem}

As another approach to the SPDE \eqref{expsqe} we consider the regularized ${\rm exp}(\Phi)_{2}$-measure 
$\mu^{(\alpha)}$ by
\begin{align}\label{expNmeas}
\mu_N^{(\alpha)}(d\phi):=\frac{1}{Z^{(\alpha)}_N} \exp \left\{ - \int_\Lambda\exp\left(\alpha P_N\phi (x)-\frac{\alpha^2}{2}C_N\right)dx\right\} \mu _0 ( d\phi ),
\quad N\in\mathbb{N},
\end{align}
where $Z^{(\alpha)}_N>0$ is the normalizing constant, and the SPDE associated with this measure.
The sequence $\{\mu_N^{(\alpha)}\}$ of probability measures weakly converges to 
$\mu^{(\alpha)}$ (see Corollary \ref{cor:expmeas}).
Let $\rho$ be a nonnegative function on $\mathbb{R}^2$ and let
$$
P_Nf(x)=\int_{\mathbb{R}^2}2^{2N}\rho(2^N(x-y))\tilde{f}(y)dy,\quad x\in {\Lambda}, \ f\in\mathcal{D}'(\Lambda),
$$
where $\tilde{f}$ is the periodic extension of $f$ to $\mathbb{R}^2$.
Then the operator $P_N$ is a nonnegative operator, i.e. $P_N f \ge0$ if $f\ge0$.
Denote the Fourier transform $\rho$ by $\psi$ and assume that $\psi$ satisfies the conditions above.
We remark that we are able to choose usual mollifiers as $P_N$.
Indeed, if $\rho$ is a nonnegative and radial function in the Schwartz class with $\int _{\mathbb R ^2}\rho (x) dx =1$, then its Fourier transform $\psi$ is also in the Schwartz class and hence satisfies the conditions of $\psi$ above.

Then the second result is stated as follows.
\begin{thm}\label{mainthm2}
Let $|\alpha|<\sqrt{4\pi}$, $\varepsilon>0$, and $P_N$ as above.
Let $N\in\mathbb{N}$ and consider the solution ${\bf\Phi}^N={\bf\Phi}^N(\phi)$ of an SPDE
\begin{equation}\label{expsqe2}
\left\{
\begin{aligned}
\partial_t{\bf\Phi}_t^N&=\frac12(\triangle-1){\bf\Phi}_t^N
-\frac\alpha2 P_N\exp\left(\alpha P_N{\bf\Phi}_t^N-\frac{\alpha^2}2C_N\right)+\dot{W}_t,\\
{\bf\Phi}_0^N&=\phi\in\mathcal{D}'(\Lambda).
\end{aligned}
\right.
\end{equation}
Let $\xi_N$ be a random variable with the law $\mu_N^{(\alpha)}$ and independent of $W$.
Then $\bar{\bf\Phi}^N={\bf\Phi}(\xi_N)$ is a stationary process and the family $\{\bar{\bf\Phi}^N\}_{N=1}^{\infty}$ 
converges in law to the strong solution $\bar{\bf\Phi}$ of \eqref{expsqe} with an initial law $\mu^{(\alpha)}$, in the space 
$C([0,T];H^{-\varepsilon}(\Lambda))$ for any $T>0$.
Moreover, the law of $\bar{\bf\Phi}_t$ is $\mu^{(\alpha)}$ for any $t\ge0$.
\end{thm}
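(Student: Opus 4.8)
The plan is to reduce the stationarity and the convergence in law to the pathwise convergence already established in Theorem \ref{mainthm1}, via the Da Prato--Debussche decomposition and a coupling of the driving noises. First I would recall that the solution ${\bf\Phi}^N(\phi)$ of \eqref{expsqe2} is, by the same splitting into Ornstein--Uhlenbeck part $Z^N$ (solving the linear equation driven by $\dot W$, started from $0$) and a remainder $Y^N={\bf\Phi}^N-Z^N$ of higher regularity, globally well-posed by the estimates prepared in the sections preceding the theorem; the key point is that the coefficient $P_N\exp^\diamond(\alpha P_N{\bf\Phi}^N_t)$ is built from $\exp(\alpha P_N\phi-\tfrac{\alpha^2}2C_N)$, whose $L^2(\mu_0)$-moments are uniform in $N$ for $|\alpha|<\sqrt{4\pi}$ by the computation of $C_N$ and Nelson-type estimates. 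That the law of $\bar{\bf\Phi}^N_t={\bf\Phi}^N(\xi_N)_t$ equals $\mu_N^{(\alpha)}$ for every $t\ge0$ is the standard finite-dimensional-approximation fact that $\mu_N^{(\alpha)}$ is invariant for \eqref{expsqe2}: the drift $-\tfrac12(1-\Delta)\phi-\tfrac\alpha2 P_N\exp(\alpha P_N\phi-\tfrac{\alpha^2}2C_N)$ is, up to the factor $\tfrac12$, the (Fréchet) gradient of the potential $U_N(\phi)=\tfrac12\|\phi\|_{H^{1}}^2+\int_\Lambda\exp(\alpha P_N\phi-\tfrac{\alpha^2}2C_N)\,dx$ in $L^2(\Lambda)$, so \eqref{expsqe2} is the gradient-type (symmetric) Langevin dynamics whose reversible measure is $Z_N^{-1}e^{-U_N}\prod d\phi(x)$, which is precisely $\mu_N^{(\alpha)}$; since ${\bf\Phi}^N$ is a genuine classical solution after mollification, the invariance is verified by a direct Itô/integration-by-parts argument (or inherited from the known Dirichlet-form construction of \cite{AR91}).

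Next I would establish the convergence in law. Fix $T>0$. On a common probability space carry one cylindrical Brownian motion $W$ and a sequence of initial data $\xi_N\sim\mu_N^{(\alpha)}$, $\xi\sim\mu^{(\alpha)}$; since $\mu_N^{(\alpha)}\Rightarrow\mu^{(\alpha)}$ on $H^{-\varepsilon}(\Lambda)$ by Corollary \ref{cor:expmeas} and both are absolutely continuous w.r.t.\ $\mu_0$ with $L^p$-densities (uniformly in $N$), a Skorokhod coupling lets us assume $\xi_N\to\xi$ in $H^{-\varepsilon}$ $\mathbb P$-a.s., with the convergence also holding in $L^2$ against test functions. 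The remaining task is to show ${\bf\Phi}^N(\xi_N)\to{\bf\Phi}(\xi)$ in $C([0,T];H^{-\varepsilon})$ in probability. Here the two approximations in \eqref{expsqe1} and \eqref{expsqe2} differ: \eqref{expsqe1} mollifies the noise while \eqref{expsqe2} mollifies the drift. I would therefore \emph{not} try to compare ${\bf\Phi}^N$ directly with $\Phi^N$, but instead observe that both converge to the \emph{same} limit: the OU part of \eqref{expsqe2} is the full (un-mollified) $Z$, and the relevant input to the shifted equation is $\exp^\diamond(\alpha P_N(Z^N_t+Y^N_t))$ with $P_N$ the mollifier; the $N$-uniform bounds on $\exp^\diamond(\alpha P_N Z_t)$ in $L^2\big([0,T];W^{-\alpha^2/4\pi-\varepsilon,2}\big)$ proved in Section \ref{sec:OU} (the ``regard the Wick exponential as an $L^2$-in-time function'' idea highlighted in the introduction), together with the local Lipschitz/stability estimate for the solution map of the shifted equation in terms of these inputs and of the initial datum, give ${\bf\Phi}^N(\xi_N)\to{\bf\Phi}(\xi)$ in $C([0,T];H^{-\varepsilon})$ on the event where the inputs converge — which is $\mathbb P$-a.s. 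This is essentially the same machinery as in Theorem \ref{mainthm1}, now applied with the general initial law instead of the deterministic $\mu_0$-generic one and with the drift-mollification in place of noise-mollification; the only genuinely new estimate is the convergence $P_N\exp^\diamond(\alpha P_N\,\cdot\,)\to\exp^\diamond(\alpha\,\cdot\,)$ as inputs, which follows from the same Wick-calculus computations used to define $\exp^\diamond$ plus the mapping properties of $P_N$ recorded before the theorem.

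Finally, stationarity of $\bar{\bf\Phi}$ and the identification of its one-time marginal pass to the limit: for each $N$, $\bar{\bf\Phi}^N$ is stationary with marginal $\mu_N^{(\alpha)}$, and since $\bar{\bf\Phi}^N\to\bar{\bf\Phi}$ in law in $C([0,T];H^{-\varepsilon})$ for all $T$, the finite-dimensional distributions converge, so $\bar{\bf\Phi}$ is stationary; its one-time marginal is $\lim_N\mu_N^{(\alpha)}=\mu^{(\alpha)}$ by Corollary \ref{cor:expmeas}. Continuity of the shift maps on $C([0,T];H^{-\varepsilon})$ makes the stationarity identity stable under weak limits. The main obstacle I anticipate is the stability estimate for the shifted equation under the drift-mollification with \emph{random} (merely $H^{-\varepsilon}$-valued, $\mu^{(\alpha)}$-distributed) initial data and an exponentially growing nonlinearity: one must control the a priori bound on ${\bf\Phi}^N$ uniformly in $N$ on $[0,T]$ with only the $L^2(\mu_0)$-integrability of $\exp^\diamond(\alpha\xi)$ at hand, and then upgrade this to the convergence of the nonlinear terms $P_N\exp^\diamond(\alpha P_N{\bf\Phi}^N_t)\to\exp^\diamond(\alpha\Phi_t)$ in the time-integrated $H^{-\varepsilon}$ sense despite the exponential — this is where the "$L^2$-in-time" viewpoint and the positivity of $P_N$ (guaranteeing no sign cancellations spoil the moment bounds) are essential.
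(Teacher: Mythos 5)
Your proposal follows essentially the same route as the paper's proof: stationarity of $\bar{\bf\Phi}^N$ via integration by parts/invariance of $\mu_N^{(\alpha)}$ (the paper phrases this through the generator and Echeverr\'ia's criterion), then a Skorokhod coupling of the initial data kept independent of $W$ on a product space, the decomposition into the un-mollified Ornstein--Uhlenbeck part plus a remainder, and convergence via the stability of the shifted-equation solution maps $\mathcal{S}_N\to\mathcal{S}$ combined with the convergence of the Wick-exponential inputs $\mathcal{X}^{(\exp,N)}(\xi_N)\to\mathcal{X}^{(\exp,\infty)}(\xi)$, which the paper establishes (Lemma \ref{lem:contiexpOU}) exactly through the uniformly bounded Radon--Nikodym densities and the invariance of $\mu_0$ that you invoke. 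Two small corrections that do not change the approach: the initial datum must be carried by the OU part, so that the remainder starts from $0$ (this is what makes the maximum-principle bound $\|e^{\alpha P_N{\bf Y}^N}\|_{C([0,T];C(\Lambda))}\lesssim 1$ and the $H^{2-\beta}$-initial-data theory of the shifted equation applicable), rather than an OU started from $0$ with a distribution-valued remainder initial condition as in your first paragraph; and the continuity of the solution maps is obtained by a compactness argument as in Lemma \ref{lem:existence}, not by a local Lipschitz estimate.
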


Now we are in a position to introduce a
pre-Dirichlet form $({\cal E},{\mathfrak F}C_{b}^{\infty})$.
We fix $\beta \in(\frac{\alpha^2}{4\pi},1)$ and set 
$H=L^{2}(\Lambda; {\mathbb R})$ and $E=H^{-\beta}(\Lambda)$. 
Let ${\mathfrak F}C_{b}^{\infty}$ be the space of all smooth cylinder functions
on $E$ having the form
$$
F(\phi)=f(\langle \phi, l_{1} \rangle , \ldots , \langle \phi, l_{n} \rangle), \quad \phi \in E,
$$
with $n\in {\mathbb N}$, $f
\in C^{\infty}_{b}({\mathbb R}^{n}; {\mathbb R})$ 
and $l_{1}, \ldots , l_{n} \in {\rm Span}\{ e_{k}; k\in {\mathbb Z}^{2} \}$.
Since we have supp$(\mu^{(\alpha)})=E$, two different functions in
${\mathfrak F}C_{b}^{\infty}(K)$ are also different in $L^{p}(\mu^{(\alpha)})$-sense.
Note that ${\mathfrak F}C_{b}^{\infty}$ is dense in $L^{p}(\mu^{(\alpha)})$ for all $p\geq 1$.
For $F \in {\mathfrak F}C_{b}^{\infty}$, we define the $H$-Fr\'echet derivative 
$D_{H}F:E\to H$ by
$$
D_{H}F(\phi):=\sum_{j=1}^{n}{\partial_{j}} {f}
\big(
\langle \phi,l_{1} \rangle , \ldots,
\langle \phi,l_{n} \rangle
\big)l_{j}, \quad \phi\in E.
$$
We consider a pre-Dirichlet form $({\cal E},{\mathfrak F}C_{b}^{\infty})$
which is given by 
\begin{equation}
{\cal E}(F,G)=
\frac{1}{2} 
\int_{E} \big( D_{H}F(w), D_{H}G(w) \big)_{H}
 \mu ^{(\alpha )}(dw),~~F,G\in {\mathfrak F}C_{b}^{\infty}.
 \label{DF-intro}
\end{equation}
By following the argument in \cite{AR91, AKMR19}, we easily deduce that
$({\cal E},{\mathfrak F}C_{b}^{\infty})$ is closable on $L^{2}(\mu^{(\alpha)})$.
So we can define ${\cal D(E)}$ as the completion of ${\mathfrak F}C_{b}^{\infty}$
with respect to ${\cal E}_{1}^{1/2}$-norm. Thus, by directly applying the general methods in the theory of Dirichlet forms (cf. \cite{MR92, CF12}), we can prove quasi-regularity of $({\cal E}, {\cal D(E)})$ and the existence of a diffusion process 
${\mathbb M}=(\Theta, {\mathcal G}, ( {\mathcal G}_{t})_{t\geq 0},  (\Psi_{t})_{t\geq 0}, ( {\mathbb Q}_{\phi} )_{\phi \in E})$
properly associated with $({\cal E}, {\cal D(E)})$.

The following theorem says that the diffusion process
$\Psi=(\Psi_{t})_{t\geq 0}$
coincides with the strong solution $\Phi$.
\begin{thm}\label{mainthm3}
Let $|\alpha|<\sqrt{4\pi}$. Then for $\mu^{(\alpha)}$-a.e. $\phi$, the diffusion process
$\Psi$ coincides with the strong solution $\Phi$ of the SPDE \eqref{expsqe} 
driven by some 
$L^{2}(\Lambda)$-cylindrical 
$({\mathcal G}_{t})$-Brownian motion ${\mathcal W}=({\mathcal W}_{t})_{t\geq 0}$
with the initial value $\phi$, ${\mathbb Q}_\phi$-almost surely.
\end{thm}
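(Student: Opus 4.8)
The plan is to follow the by now standard route for identifying a Dirichlet-form diffusion with a strong SPDE solution (as in \cite{RZZ17a} for the $\Phi^{4}_{2}$-case): one shows that $\Psi$ is a probabilistically weak solution of \eqref{expsqe} and then invokes the pathwise uniqueness established earlier in the paper. First I would record the integration-by-parts formula for $\mu^{(\alpha)}$ along directions $l\in{\rm Span}\{e_{k}\}$, namely $\int_{E}\partial_{l}F\,d\mu^{(\alpha)}=\int_{E}F\bigl(\langle(1-\Delta)l,\phi\rangle+\alpha\langle\exp^{\diamond}(\alpha\phi),l\rangle\bigr)\,d\mu^{(\alpha)}$ for $F\in{\mathfrak F}C_{b}^{\infty}$, which follows from the Cameron--Martin formula for $\mu_{0}$ together with the Wick-exponential calculus of Section~\ref{sec:OU}; the point is that the density $\frac{1}{Z^{(\alpha)}}\exp\bigl(-\int_{\Lambda}\exp^{\diamond}(\alpha\phi)(x)\,dx\bigr)$ is bounded by $1/Z^{(\alpha)}$, the integrand being nonnegative, so that $L^{2}(\mu_{0})\subset L^{2}(\mu^{(\alpha)})$ and all quantities above are square integrable. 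Consequently ${\cal E}(F,G)=-\int_{E}({\cal L}F)\,G\,d\mu^{(\alpha)}$ for $F,G\in{\mathfrak F}C_{b}^{\infty}$, where ${\cal L}F(\phi)=\tfrac12\mathrm{tr}(D_{H}^{2}F(\phi))+\tfrac12\langle\phi,(\Delta-1)D_{H}F(\phi)\rangle-\tfrac\alpha2\langle\exp^{\diamond}(\alpha\phi),D_{H}F(\phi)\rangle$; since ${\cal L}F\in L^{2}(\mu^{(\alpha)})$ this exhibits $F$ as an element of the domain of the $L^{2}(\mu^{(\alpha)})$-generator $L$ of $({\cal E},{\cal D(E)})$ with $LF={\cal L}F$. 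By the Fukushima decomposition of the quasi-regular form $({\cal E},{\cal D(E)})$, for ${\cal E}$-q.e.\ $\phi$---hence, since $\mu^{(\alpha)}$ charges no ${\cal E}$-exceptional set, for $\mu^{(\alpha)}$-a.e.\ $\phi$---the process $M_{t}^{[F]}:=F(\Psi_{t})-F(\Psi_{0})-\int_{0}^{t}{\cal L}F(\Psi_{s})\,ds$ is a continuous $\mathbb{Q}_{\phi}$-martingale with mutual variations $\langle M^{[F]},M^{[G]}\rangle_{t}=\int_{0}^{t}(D_{H}F,D_{H}G)_{H}(\Psi_{s})\,ds$.

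Next I would construct the driving noise. Applying the martingale problem to the truncated coordinate functions $\chi_{R}(\langle\phi,e_{k}\rangle)\in{\mathfrak F}C_{b}^{\infty}$, with $\chi_{R}$ smooth approximations of the identity, and letting $R\to\infty$---the localization being legitimate because, by invariance of $\mu^{(\alpha)}$ and stationarity of $\Psi$ under $\mathbb{Q}_{\mu^{(\alpha)}}:=\int\mathbb{Q}_{\phi}\,\mu^{(\alpha)}(d\phi)$, the relevant integrands are $\mu^{(\alpha)}$-integrable and $\sup_{s\le t}|\langle\Psi_{s},e_{k}\rangle|<\infty$ $\mathbb{Q}_{\phi}$-a.s.\ for $\mu^{(\alpha)}$-a.e.\ $\phi$---one obtains that ${\cal W}_{t}^{(k)}:=\langle\Psi_{t},e_{k}\rangle-\langle\Psi_{0},e_{k}\rangle-\int_{0}^{t}\bigl(\tfrac12\langle(\Delta-1)\Psi_{s},e_{k}\rangle-\tfrac\alpha2\langle\exp^{\diamond}(\alpha\Psi_{s}),e_{k}\rangle\bigr)\,ds$ is a continuous $({\mathcal G}_{t})$-martingale under $\mathbb{Q}_{\phi}$ with $\langle{\cal W}^{(k)},{\cal W}^{(\ell)}\rangle_{t}=\delta_{k\ell}\,t$; here the drift integral is finite for $\mu^{(\alpha)}$-a.e.\ $\phi$ because integrating over $\phi$ against the invariant measure and using stationarity gives $\mathbb{E}^{\mathbb{Q}_{\mu^{(\alpha)}}}\!\int_{0}^{t}|\langle\exp^{\diamond}(\alpha\Psi_{s}),e_{k}\rangle|\,ds=t\!\int_{E}|\langle\exp^{\diamond}(\alpha\phi),e_{k}\rangle|\,d\mu^{(\alpha)}<\infty$ by the first step. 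By L\'evy's characterization $({\cal W}^{(k)})_{k\in{\mathbb Z}^{2}}$ is a family of independent $({\mathcal G}_{t})$-Brownian motions, so ${\cal W}_{t}:=\sum_{k}{\cal W}_{t}^{(k)}e_{k}$ is an $L^{2}(\Lambda)$-cylindrical $({\mathcal G}_{t})$-Brownian motion, and the displayed identities say precisely that the Fourier modes of $\Psi$ solve the corresponding system of linear SDEs, i.e.\ $\Psi$ is a mild solution of \eqref{expsqe} driven by ${\cal W}$ with $\Psi_{0}=\phi$.

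To conclude I would invoke pathwise uniqueness. Using the same splitting $\Psi={\cal Z}+{\cal Y}$ as in the construction of the strong solution, where ${\cal Z}$ is the Ornstein--Uhlenbeck process built from ${\cal W}$ (with the appropriate initial datum) and ${\cal Y}$ solves the shifted equation with input $\exp^{\diamond}(\alpha{\cal Z})$, one checks that $\Psi$ is a solution of \eqref{expsqe} in exactly the sense for which pathwise uniqueness has been proved: indeed, by stationarity the one-dimensional marginals of $\Psi$ are $\mu^{(\alpha)}\sim\mu_{0}$, so ${\cal Z}$ has the structure for which the $L^{2}$-in-time estimates of the earlier sections apply and ${\cal Y}$ lies in the corresponding solution class. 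Comparing $\Psi$ with the strong solution $\Phi=\Phi(\phi,{\cal W})$ driven by the same ${\cal W}$ and started at the same $\phi$, pathwise uniqueness forces $\Psi=\Phi(\phi,{\cal W})$ $\mathbb{Q}_{\phi}$-a.s.\ for $\mu^{(\alpha)}$-a.e.\ $\phi$, which is the assertion.

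The main obstacle is the interplay between the two middle steps: the identity $LF={\cal L}F$ is a priori only an $L^{2}(\mu^{(\alpha)})$-statement, in which $\exp^{\diamond}(\alpha\phi)$ figures as an equivalence class, and it must be upgraded to a pathwise statement valid along $\mathbb{Q}_{\phi}$-almost every trajectory of $\Psi$, with control of $\int_{0}^{t}\|\exp^{\diamond}(\alpha\Psi_{s})\|_{H^{-\varepsilon}}\,ds$ so that the mild formulation is meaningful. The device for both points is stationarity, which transports the time-zero construction of $\exp^{\diamond}(\alpha\,\cdot\,)$---through the Ornstein--Uhlenbeck decomposition and the Wick calculus of Section~\ref{sec:OU}---to all positive times and supplies the required integrability; this is also what justifies the truncation removing the boundedness restriction on the test functions, needed to reach the unbounded coordinate maps $\langle\,\cdot\,,e_{k}\rangle$. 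Once these are in place, the final step is a routine Yamada--Watanabe argument given the pathwise uniqueness already at hand.
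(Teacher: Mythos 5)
Your overall strategy is the same as the paper's: produce a weak solution from the Dirichlet form (integration by parts, Fukushima decomposition/AR91 machinery, L\'evy characterization, passage from the analytically weak to the mild form), then split $\Psi=\mathfrak X+\mathfrak Y$ \`a la Da Prato--Debussche and conclude by the uniqueness already established. The first two steps are essentially what the paper does (it outsources the martingale construction to \cite[Lemma 6.1, Theorem 6.2]{AR91} and the weak-to-mild equivalence to \cite[Theorem 13]{Ond04}, for which the integrability \eqref{Q-1}--\eqref{Q-2} of $\|\exp^{\diamond}(\alpha\Psi_t)\|_E$ must be secured first, via \cite[Lemma 4.2]{AR91}; your stationarity argument supplies the same kind of bound).

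The genuine gap is in the identification step, which is where the real content of the paper's proof lies and which you only gesture at. What is available from the earlier sections is \emph{not} an abstract pathwise uniqueness statement for \eqref{expsqe}: it is uniqueness of mild solutions, in the class $\mathscr{Y}_T$, of the deterministic shifted equation \eqref{eq:Ups} with a \emph{given} nonnegative input $\mathcal{X}\in L^2([0,T];H_+^{-\beta})$ (Lemma \ref{lem:unique}), and the strong solution is by definition $\Phi(\phi)=X(\phi)+\mathcal{S}(0,\mathcal{X}^{(\exp,\infty)}(\phi))$. To compare $\Psi$ with $\Phi(\phi,\mathcal{W})$ you must therefore show that the drift appearing in the mild equation for $\Psi$, namely $\exp^{\diamond}(\alpha\Psi_s)$ (which enters only as the measurable extension of an $L^2(\mu_0)$-limit, evaluated along the trajectory), factorizes as $e^{\alpha\mathfrak Y_s}\exp^{\diamond}(\alpha\mathfrak X_s)$ for a.e.\ $s$, $\mathbb{Q}_\phi$-a.s.; only then does $\mathfrak Y$ solve \eqref{eq:Ups} with input $\exp^{\diamond}(\alpha\mathfrak X(\phi))$ and only then does Lemma \ref{lem:unique} apply. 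This is exactly the paper's Lemma \ref{lem:wexplaw} ($\exp^{\diamond}(\alpha(f+\phi))=e^{\alpha f}\exp^{\diamond}(\alpha\phi)$ for $f\in H^{1+\delta}$ and $\phi$ in the convergence set $A$, proved via Theorem \ref{thm:func*dist}, Theorem \ref{thm:stableM} and Lemma \ref{lem:f(u)}), combined with the argument that $\mathbb{Q}_\phi$-a.s.\ for a.e.\ $t$ one has $\Psi_t\in A$, $\mathfrak X_t\in A$ and $\mathfrak Y_t\in H^{1+\delta}$ --- the first by invariance of $\mu^{(\alpha)}$ under $\Psi$, the second by invariance of $\mu_0$ under the OU dynamics together with the bounded density $d\mu^{(\alpha)}/d\mu_0$ (note $\mathfrak X$ is started at $\phi$, not at $\mu_0$-stationarity, so this absolute-continuity step is needed), and the third by the Schauder estimate once $\exp^{\diamond}(\alpha\Psi)\in L^2([0,T];H^{-\beta})$ is known. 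Your ``stationarity transports the time-zero construction'' remark points at the right difficulty but does not supply this factorization, and without it the sentence ``one checks that $\Psi$ is a solution in exactly the sense for which pathwise uniqueness has been proved'' is precisely the assertion to be proved. A minor further point: no Yamada--Watanabe transfer is needed, since $\Psi$ and $\Phi(\phi,\mathcal{W})$ are driven by the same noise on the same space; what is used is the deterministic uniqueness of Lemma \ref{lem:unique}, for which you must also verify $\mathfrak Y\in\mathscr{Y}_T$ $\mathbb{Q}_\phi$-a.s.
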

\begin{rem} \label{Garban}
Garban \cite{Gar18} studied the following SPDE for a parameter $\gamma\in(0,2)$.
\begin{align}\label{eq:Garban}
\partial_tX_t(x)=\frac{1}{4\pi}\triangle X_t(x)-e^{\gamma X_t(x)}+\dot{W}_t(x),\quad
t>0,\quad x\in(\mathbb{R}/\mathbb{Z})^2.
\end{align}
This equation is essentially the same as \eqref{expsqe} for the existence and uniqueness of the solution.
(The massless version of the $\exp(\Phi)_2$-measure is called a \emph{Liouville measure} in \cite{Gar18}.)
To see this, we consider the equation \eqref{eq:Garban} in the whole plane $\mathbb{R}^2$.
After that, by setting $\tilde{X}_t(x)=\sqrt{2\pi}^{-1}X_t(\sqrt{2\pi}^{-1}x)$, we have the equation
$$
\partial_t\tilde{X}_t(x)=\frac{1}{2}\triangle \tilde{X}_t(x)-\sqrt{2\pi}^{-1}e^{\sqrt{2\pi}\gamma \tilde{X}_t(x)}+\dot{\tilde{W}}_t(x),
$$
where $\tilde{W}_t(x) := \sqrt{2\pi}^{-1}W_t(\sqrt{2\pi}^{-1}x)$ has the same law as $W_t(x)$.
Therefore the relation between two parameters $\alpha$ and $\gamma$ should be given by
$$
\alpha=\sqrt{2\pi}\gamma.
$$
Garban \cite{Gar18} obtained the local well-posedness of \eqref{eq:Garban} when $\gamma<2\sqrt{2}-\sqrt{6}\fallingdotseq0.38$, and constructed the strong solution locally in time without continuity in $W$ like Theorem \ref{mainthm1} when $\gamma<2\sqrt{2}-2\fallingdotseq0.83$.
We remark that, if we multiply the term $e^{\gamma X_t(x)}$ in (\ref{eq:Garban}) by $\gamma$, the equation will be same as (\ref{expsqe}) up to multiplications by absolute constants and changes of notations.
In this paper, we obtain both the global well-posedness and the continuity in $W$ in larger region $\gamma<\sqrt{2}\fallingdotseq 1.41$, which comes from the assumption $\alpha<\sqrt{4\pi}$ and the relation $\alpha=\sqrt{2\pi}\gamma$.
The difference in the proofs is that we directly construct the global solution by the compact embedding theorem and checked the uniqueness independently, while the fixed point theorem is applied in \cite{Gar18}.

For the existence of the invariant measure, \eqref{expsqe} and \eqref{eq:Garban} are different, because the $0$-Fourier mode $\langle X_t,  {\bf{e}}_{(0,0)} \rangle$ does not have dissipativity. This implies that in the case without mass, the invariant measure is an infinite measure and we need extra treatments, for e.g. by punctures (See \cite[Theorem 1.12]{Gar18}).
\end{rem}

\begin{rem}
The assumption $|\alpha|<\sqrt{4\pi}$ is necessary to discuss the problem in $L^2$-regime with respect to the probability measure.
Indeed, we apply the fact that the Wick exponential of the Ornstein-Uhlenbeck process, which appears as the noise term in the shifted equation, belongs to the $L^2$-space with respect to the time parameter almost surely.
On the other hand, the Wick exponential with respect to the free field measure has been constructed for $\alpha \in (-\sqrt{8\pi}, \sqrt{8\pi})$.
However, if $\sqrt{4\pi} < |\alpha | < \sqrt{8\pi}$, the constructed Wick exponential is not in $L^2$-space, but in $L^p$-space for some $p\in (1,2)$ with respect to the free field measure. 
See \cite{Kus92}, for the detail.
\end{rem}

The organization of the rest of the present paper is as follows.
In Section \ref{sec:OU} we introduce the exponential Wick product on the Gaussian free field measure and study the regularity of the Wick exponentials and the quantum field generated by them.
Furthermore, we also introduce the process generated by the Wick exponentials of the Orinstein-Uhlenbeck process 
and see the stability of the process in the initial value.
In Section \ref{sec:wellposed} we prove Theorem \ref{mainthm1}.
Precisely, we will see the existence and uniqueness of the solution to the shifted equation.
We remark that the argument in Section \ref{sec:wellposed} is pathwise and that we directly construct a solution global in time.
The key idea of the proof is that we regard the Wick exponentials of the Orinstein-Uhlenbeck process as an $L^2$-function in time.
In the section we also discuss some functional inequalities of nonnegative distributions (see Section \ref{sec:nnegdist}).
In Section \ref{sec:stationary} we prepare a sequence of stationary solutions associated to the approximating measures of $\exp (\Phi )_2$-measures and see the convergence of the sequence to the solution obtained in Theorem \ref{mainthm1} (Theorem \ref{mainthm2}).
The stability of the Wick exponentials of the Orinstein-Uhlenbeck process 
obtained in Section \ref{sec:OU} is applied in the proof.
In Section \ref{sec:DF} we prove Theorem \ref{mainthm3}, which concludes that the process constructed by 
Dirichlet forms coincides with the solution obtained in Theorem \ref{mainthm1}.
In particular, it yields the pathwise uniqueness of the SPDE associated to the Dirichlet form.

Throughout this paper, we use the notation $A\lesssim B$ for two functions $A = A(\lambda)$ and $B = B(\lambda)$ of a variable $\lambda$, if there exists a constant $c>0$ independent of $\lambda$ such that $A\le cB$. We write $A\simeq B$ if $A\lesssim B$ and $B\lesssim A$. We write $A\lesssim_\mu B$ if we want to emphasize that the constant $c$ depends on another variable $\mu$.


\section{Wick exponential of the Ornstein-Uhlenbeck process}\label{sec:OU}

In this section, we prepare
some properties of 
the infinite-dimensional Ornstein-Uhlenbeck (OU in short) process, and Wick exponentials.

\subsection{Infinite-dimensional OU process}
Let $X=X(\phi)$ be the unique solution of the initial value problem
\begin{align}\label{eq:OU}
\left\{
\begin{aligned}
\partial_tX_t&=\frac12(\triangle-1)X_t+\dot{W}_t,\\
X_0&=\phi
\end{aligned}
\right.
\end{align}
for $\phi\in\mathcal{D}'(\Lambda)$.
It is known that $\mu_0$ is an invariant measure of the OU process $X$
(see e.g., \cite[Theorem 6.2.1]{DZ96}).

\begin{prop}\label{prop:aprioriOU}
For $\varepsilon>0$, $\delta\in(0,1)$, and $m\in\mathbb{N}$, there exists a constant $C>0$ such that one has the a priori estimate
\begin{align}\label{ineq:aprioriOU}
\mathbb{E}{\Big [}  \|X(\phi)\|_{C([0,\infty);H^{-\varepsilon})\cap C^{\delta/2}([0,\infty);H^{-\varepsilon-\delta})}^m {\Big ]}
\le C(1+\|\phi\|_{H^{-\varepsilon}}^m).
\end{align}
\end{prop}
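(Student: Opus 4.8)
The plan is to split the OU process into its deterministic part and stochastic part, estimate each separately, and then combine using a Gaussian hypercontractivity argument to upgrade to the $m$-th moment. Writing $X_t(\phi) = e^{t(\Delta-1)/2}\phi + Z_t$, where $Z_t = \int_0^t e^{(t-s)(\Delta-1)/2}\,dW_s$ is the stationary OU process started from $0$ (equivalently the solution with a stationary $\mu_0$-distributed initial condition, up to the transient correction), the deterministic part is handled by the smoothing property of the heat semigroup: since $e^{t(\Delta-1)/2}$ contracts on every $H^s$ and maps $H^{-\varepsilon}$ into $C^{\delta/2}([0,\infty);H^{-\varepsilon-\delta})$ with norm controlled by $\|\phi\|_{H^{-\varepsilon}}$ (the extra $\delta$ derivatives buy the time-Hölder regularity, and the mass term $-1$ gives the decay in $t$ needed for the $C([0,\infty))$ rather than $C([0,T])$ statement), one gets the linear-in-$\|\phi\|$ bound for free, and then its $m$-th power gives the $\|\phi\|^m$ term. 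So the real content is the bound on $Z$.

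For $Z$, first I would establish the estimate for $m=2$ (or any fixed even integer is just as easy) by a direct Fourier-space computation: $\mathbb{E}[\|Z_t - Z_s\|_{H^{-\varepsilon-\delta}}^2]$ is a sum over $k\in\mathbb{Z}^2$ of $(1+|k|^2)^{-\varepsilon-\delta}$ times the variance of the $k$-th Fourier mode increment, and the latter is bounded by $C|t-s|^{\delta}(1+|k|^2)^{\delta-1}$ by interpolating the two elementary bounds for the increment of a one-dimensional OU process (bounded by its stationary variance $\sim (1+|k|^2)^{-1}$, and bounded by $\sim |t-s|$). Summability over $k\in\mathbb{Z}^2$ of $(1+|k|^2)^{-\varepsilon-1}$ then gives $\mathbb{E}[\|Z_t-Z_s\|_{H^{-\varepsilon-\delta}}^2]\lesssim|t-s|^{\delta}$, and similarly $\mathbb{E}[\|Z_t\|_{H^{-\varepsilon}}^2]\lesssim 1$ uniformly in $t$. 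The uniformity in $t$ over $[0,\infty)$ again comes from the mass term: the transient correction decays exponentially.

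The passage from the $L^2$-bound to the $L^m$-bound, and from pointwise-in-$(s,t)$ bounds to a bound on the Hölder norm on $[0,\infty)$, is where I expect the only real work. Since $Z$ is a Gaussian process, all its moments are comparable: $\mathbb{E}[\|Z_t-Z_s\|_{H^{-\varepsilon-\delta}}^m]\lesssim_m |t-s|^{m\delta/2}$ by equivalence of Gaussian moments (Kahane–Khintchine, or equivalently the Wiener-chaos hypercontractivity from Section \ref{sec:OU}). Then one applies the Kolmogorov–Chentsov continuity theorem in the quantitative form that controls $\mathbb{E}[\|Z\|_{C^{\delta'/2}([0,T];H^{-\varepsilon-\delta})}^m]$ by $\|Z\|$ at a base point plus the Hölder seminorm, choosing the Hölder exponent slightly below $\delta/2$ and then renaming $\delta$; to get the statement on $[0,\infty)$ rather than $[0,T]$ one covers $[0,\infty)$ by unit intervals $[n,n+1]$, applies the estimate on each with a constant uniform in $n$ (thanks to stationarity of the increments and the exponential decay of the transient part), and sums the resulting geometric-type series using the exponential-in-$t$ decay of $\mathbb{E}[\|Z_t\|^2]$ toward its stationary value together with the mass gap. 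The main obstacle is bookkeeping: simultaneously obtaining $C([0,\infty);H^{-\varepsilon})$ regularity (no loss of derivatives, uniform in time) and $C^{\delta/2}([0,\infty);H^{-\varepsilon-\delta})$ regularity (loss of $\delta$ derivatives, Hölder in time) from one family of Fourier-mode estimates, and making the $[0,\infty)$ uniformity rigorous rather than just $[0,T]$; everything else is routine once the Gaussian-moment and Kolmogorov machinery is invoked.
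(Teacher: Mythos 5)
Your proposal is correct and follows essentially the same route as the paper: split $X$ via the mild form into the free evolution of $\phi$ (handled by the heat-semigroup estimates of Proposition \ref{prop:heatsemigr}) plus the stochastic convolution, bound second moments of increments in $H^{-\varepsilon-\delta}$ by an It\^o-isometry/Fourier-mode computation interpolated to give the factor $|t-s|^\delta$, upgrade to $m$-th moments by Gaussian hypercontractivity, and conclude with Kolmogorov's continuity theorem. Your per-mode interpolation and the paper's semigroup-property splitting of the increment are equivalent computations, and your extra remarks on uniformity over $[0,\infty)$ only flesh out a step the paper leaves implicit.
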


\begin{proof}
$X$ solves \eqref{eq:OU} in the mild form
\begin{align}\label{eq:OUmild}
X_t=e^{\frac12(\triangle-1)t}\phi+\int_0^te^{\frac12(\triangle-1)(t-s)}dW_s
=:X_t^{(1)}+X_t^{(2)}, \quad t\geq 0.
\end{align}
For $X^{(1)}$, \eqref{ineq:aprioriOU} is a consequence of Proposition \ref{prop:heatsemigr}.
The continuity of $t\mapsto X_t^{(1)}$ in $H^{-\varepsilon}$ follows from the dominated convergence theorem.
For $X^{(2)}$, by the It\^o isometry,
\begin{align*}
\mathbb{E} \big [ \|X_t^{(2)}\|_{H^{-\varepsilon}}^2 \big ]
=\sum_{k\in\mathbb{Z}^2}(1+|k|^2)^{-\varepsilon}\int_0^te^{-(1+|k|^2)(t-s)}ds
\le\sum_{k\in\mathbb{Z}^2}(1+|k|^2)^{-1-\varepsilon}<\infty.
\end{align*}
Let $0\le s<t\le T$. By the semigroup property,
\begin{align*}
X_t^{(2)}-X_s^{(2)}=\left(e^{\frac12(\triangle-1)(t-s)}-1\right)X_s^{(2)}+\int_s^te^{\frac12(\triangle-1)(t-r)}dW_r.
\end{align*}
By the It\^o isometry again,
\begin{align*}
\mathbb{E} \Big [ \|X_t^{(2)}-X_s^{(2)}\|_{H^{-\varepsilon-\delta}}^2 \Big ]
&\lesssim(t-s)^{\delta}\mathbb{E} \Big [ \|X_s^{(2)}\|_{H^{-\varepsilon}}^2 \Big ]
+\sum_{k\in\mathbb{Z}^2}(1+|k|^2)^{-\varepsilon-\delta}\frac{1-e^{-(1+|k|^2)(t-s)}}{1+|k|^2}\\
&\lesssim(t-s)^{\delta}\mathbb{E} \Big [ \|X_s^{(2)} \|_{H^{-\varepsilon}}^2 \Big ]
+\sum_{k\in\mathbb{Z}^2}(1+|k|^2)^{-\varepsilon-\delta}(1+|k|^2)^{-1+\delta}(t-s)^{\delta}\\
&\lesssim|t-s|^{\delta}.
\end{align*}
By the hypercontractivity of Gaussian random variables, we have 
$$\mathbb{E}\Big [\|X_t^{(2)}-X_s^{(2)}\|_{H^{-\varepsilon-\delta}}^{2m} \Big ]
\le C_m|t-s|^{\delta m}, \quad m\in\mathbb{N}$$ 
for some $C_m>0$. Hence \eqref{ineq:aprioriOU} is a consequence of the Kolmogorov's theorem.
\end{proof}

\subsection{Wick exponential of GFF}

For $x\in\mathbb{R}$ and $\sigma\ge0$, let $\{H_n(x;\sigma)\}_{n=0}^\infty$ be the Hermite polynomials defined via the generating function
$$
e^{\alpha x-\frac{\alpha^2}2\sigma}=\sum_{n=0}^\infty\frac{\alpha^n}{n!}H_n(x;\sigma),
\quad\alpha\in\mathbb{R}.
$$
It is well known that, if $X$ and $Y$ are jointly Gaussian random variables with means $0$ and covariances $\sigma_X$ and $\sigma_Y$ respectively, then one has
\begin{align}\label{eq:hermiteOG}
\mathbb{E}\left[H_n(X;\sigma_X)H_m(Y;\sigma_Y)\right]=\delta_{nm}n!\mathbb{E}[XY]^n.
\end{align}

Let $\phi$ be a generic element of the probability space $(\mathcal{D}'(\Lambda),\mu_0)$. Since $\mu _0$-a.e. $\phi\in H^{-\varepsilon}$, the Wick exponential of $\phi$ is defined via an approximation.
Recall that $P_N$ is an operator on ${\mathcal D}'(\Lambda )$ defined by
$$
P_N f (x) = \sum _{k\in {\mathbb Z}^2} \psi (2^{-N} k) \langle f,{\bf e}_k \rangle {\bf e}_k (x).
$$
For simplicity, denote $\psi (2^{-N}\cdot )$ by $\psi _N$. 
We define the approximating Wick exponential $\exp_N^\diamond (\alpha \phi )$ by
$$
\exp_N^\diamond(\alpha\phi)(x):=\sum_{n=0}^\infty\frac{\alpha^n}{n!}H_n \big( P_N\phi (x);C_N \big),
\quad x\in \Lambda,
$$
where
$$
C_N=\int_{\mathcal{D}'(\Lambda)}(P_N\phi(x))^2\mu_0(d\phi)=\frac1{4\pi ^2}\sum_{k\in\mathbb{Z}^2}\frac{\psi_N(k)^2}{1+|k|^2}.
$$
The fact that $\exp_N^\diamond(\alpha\cdot)\ge0$ is obvious, because
$$
\exp_N^\diamond(\alpha \phi)(x)=\exp\left(\alpha P_N\phi(x) -\frac{\alpha^2}2C_N\right)\ge0, \quad x\in \Lambda.
$$

\begin{thm}\label{thm:expgff}
Let $|\alpha|<\sqrt{4\pi}$ and $\beta\in(\frac{\alpha^2}{4\pi},1)$.
Then the sequence of functions $\{\exp_N^\diamond(\alpha\phi)\}$ converges in $H^{-\beta}$, $\mu_0$-almost everywhere and in $L^2(\mu_0; H^{-\beta})$.
Moreover, the limit $\exp^\diamond(\alpha\phi)$ is independent of the choice of $\psi$.
\end{thm}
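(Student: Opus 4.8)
The strategy is the standard Wick-chaos computation for Gaussian multiplicative chaos, adapted to the periodic Sobolev setting. First I would fix $\beta\in(\frac{\alpha^2}{4\pi},1)$ and, for $M<N$, estimate $\mathbb{E}^{\mu_0}[\|\exp_N^\diamond(\alpha\phi)-\exp_M^\diamond(\alpha\phi)\|_{H^{-\beta}}^2]$. Expanding the $H^{-\beta}$-norm in the Fourier basis $\{{\bf e}_k\}$ and using the orthogonality relation \eqref{eq:hermiteOG} for Hermite polynomials of jointly Gaussian variables, the cross terms organize into a sum over $k\in\mathbb{Z}^2$ and $n\ge 0$ of the form
\begin{equation*}
\sum_{k\in\mathbb{Z}^2}(1+|k|^2)^{-\beta}\sum_{n\ge0}\frac{\alpha^{2n}}{n!}\,\big|\,\widehat{c_N^{\,n}}(k)-\widehat{c_M^{\,n}}(k)\,\big|^2\quad\text{(schematically)},
\end{equation*}
where $c_N(x,y)=\mathbb{E}^{\mu_0}[P_N\phi(x)P_N\phi(y)]$ is the regularized covariance kernel. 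The key arithmetic input is that $c_N(x,y)$ behaves like $-\frac{1}{2\pi}\log|x-y|$ near the diagonal (up to bounded terms), so that $e^{\alpha^2 c_N(x,y)}\sim |x-y|^{-\alpha^2/2\pi}$, which is locally integrable precisely when $\alpha^2/2\pi<2$, i.e. $|\alpha|<\sqrt{4\pi}$; moreover its Fourier transform in the difference variable decays like $(1+|k|^2)^{-1+\alpha^2/4\pi}$, which is summable against $(1+|k|^2)^{-\beta}$ exactly because $\beta>\alpha^2/4\pi$. This gives a uniform bound $\mathbb{E}^{\mu_0}[\|\exp_N^\diamond(\alpha\phi)\|_{H^{-\beta}}^2]<\infty$ and, via dominated convergence applied to the mode-wise differences (using $\psi_N\to 1$ pointwise and $0\le\psi_N\le1$), shows the sequence is Cauchy in $L^2(\mu_0;H^{-\beta})$.

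Having established $L^2(\mu_0;H^{-\beta})$ convergence, I would upgrade to $\mu_0$-almost-everywhere convergence. The cleanest route is to show the convergence is fast enough along a subsequence — e.g. bound $\mathbb{E}^{\mu_0}[\|\exp_{N+1}^\diamond(\alpha\phi)-\exp_N^\diamond(\alpha\phi)\|_{H^{-\beta}}^2]\lesssim 2^{-\kappa N}$ for some $\kappa>0$ by tracking the rate at which $\psi_{N+1}-\psi_N$ kills low frequencies (here the hypotheses $|x|^{-\theta}|\psi(x)-1|$ bounded and $|x|^m|\psi(x)|$ bounded for $m\ge4$ give the quantitative decay), then invoke Borel--Cantelli together with the fact that $\exp_N^\diamond(\alpha\cdot)$ is a martingale in $N$ with respect to the filtration generated by the low-frequency modes of $\phi$ — so the full sequence converges $\mu_0$-a.e., not merely a subsequence. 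Alternatively one appeals directly to the $L^2$-martingale convergence theorem in $H^{-\beta}$. Either way, almost-sure convergence follows from the martingale structure plus the uniform $L^2$ bound from the first step.

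Finally, independence of the limit from the choice of $\psi$: given two admissible cut-offs $\psi$ and $\psi'$ with operators $P_N$, $P_N'$, I would estimate $\mathbb{E}^{\mu_0}[\|\exp_N^\diamond(\alpha\phi)-\exp_N^{\diamond,\psi'}(\alpha\phi)\|_{H^{-\beta}}^2]\to 0$ as $N\to\infty$ by the same Wick-expansion/dominated-convergence argument — the relevant kernels are $\mathbb{E}^{\mu_0}[P_N\phi(x)P_N'\phi(y)]$ and the three variants, all of which converge to the same limiting Green's function $(1-\Delta)^{-1}(x,y)$ in the appropriate sense because $\psi_N,\psi_N'\to 1$ pointwise with domination. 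Since both sequences converge in $L^2(\mu_0;H^{-\beta})$ and their difference tends to zero, the limits coincide $\mu_0$-a.e.

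\textbf{Main obstacle.} The technical heart is the first step: controlling $\sum_{k}(1+|k|^2)^{-\beta}$ against the Fourier coefficients of $\sum_n \frac{\alpha^{2n}}{n!}c_N(\cdot,\cdot)^n = e^{\alpha^2 c_N}-1$ uniformly in $N$. One must show the regularized log-kernel estimate $c_N(x,y)=-\frac{1}{2\pi}\log|x-y|+O(1)$ with constants uniform in $N$ (this is where the precise form of the covariance $(1-\Delta)^{-1}$ on $\mathbb{T}^2$ and the hypotheses on $\psi$ enter), deduce the local integrability of $|x-y|^{-\alpha^2/2\pi}$ and the consequent frequency decay rate $-2+\alpha^2/2\pi$ of its transform, and then check that $\beta>\alpha^2/4\pi$ makes the resulting series converge. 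Keeping every bound uniform in $N$ (so that dominated convergence applies mode-by-mode) rather than merely finite is the delicate bookkeeping point.
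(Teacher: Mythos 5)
Your plan follows essentially the same route as the paper's proof: Hermite/Wick orthogonality reduces the $L^2(\mu_0;H^{-\beta})$ estimate to the regularized covariance, the logarithmic kernel produces the singularity $|x-y|^{-\alpha^2/2\pi}$ whose integrability and Fourier decay encode exactly $|\alpha|<\sqrt{4\pi}$ and $\beta>\frac{\alpha^2}{4\pi}$ (the paper keeps the term-by-term chaos expansion and beats an $(n-1)!$ Green-function bound with the $1/n!$, rather than resumming to $e^{\alpha^2 c_N}$ as you do; the computations are equivalent), and the rate $2^{-\lambda N}$ extracted from $|\psi(x)-1|\lesssim|x|^{\theta}$ yields both almost-everywhere convergence of the full sequence (the paper sums consecutive differences via a Young-type inequality; your Borel--Cantelli on consecutive differences accomplishes the same) and the independence of the limit from $\psi$. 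One caveat: the martingale structure you invoke is not available in this generality, since for a cut-off $\psi$ that is not a nested spectral projection (e.g.\ a smooth mollifier) the increment $P_{N+1}\phi-P_N\phi$ is not independent of the modes entering $P_N\phi$, so $\{\exp_N^\diamond(\alpha\phi)\}_N$ is not a martingale; this remark is harmless, because your quantitative rate plus Borel--Cantelli already gives full-sequence $\mu_0$-a.e.\ convergence without it.
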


\begin{proof}
In the proof, all constants $C$ used below depend neither on $n$ nor $N$.
Let $N\in {\mathbb N}$ and $\ell\in\mathbb{Z}^2$. By the formula \eqref{eq:hermiteOG} and the fact that $H_0(x;\sigma)=1$, we have
\begin{align*}
&\int_{\mathcal{D}'(\Lambda)}\left|\left\langle \exp_{N+1}^\diamond(\alpha\phi) - \exp_N^\diamond(\alpha\phi) ,
{\bf e}_\ell \right\rangle\right|^2\mu_0(d\phi)\\
&= \sum _{n=1}^\infty \frac{\alpha^{2n}}{(n!)^2}
\int_{\mathcal{D}'(\Lambda)}
\left| \left\langle H_n(P_{N+1}\phi;C_N) - H_n(P_N\phi;C_N) , {\bf e}_\ell \right\rangle \right|^2\mu_0(d\phi)\\
&=\sum _{n=1}^\infty \frac{\alpha^{2n}}{(n!)^2} \int_\Lambda\int_\Lambda
\int_{\mathcal{D}'(\Lambda)}
\Bigl[ \left\{H_n(P_{N+1}\phi(x);C_N) - H_n(P_N\phi(x);C_N)\right\}\\
&\hspace{3cm}\times\left\{H_n(P_{N+1}\phi(y);C_N) - H_n(P_N\phi(y);C_N)\right\}\Bigr]\mu_0(d\phi)
\overline{{\bf e}_\ell(x)}{\bf e}_\ell(y)dxdy\\
&=
\sum _{n=1}^\infty \frac{\alpha^{2n}}{n!} \int _{\Lambda} \int _{\Lambda}
\left( \sum _{k\in {\mathbb Z}^2}\frac{\psi _{N+1}(k)^2}{1+|k|^2} {\bf e}_k (x) \overline{{\bf e}_k(y)}\right) ^n 
\overline{{\bf e}_\ell(x)} {\bf e}_\ell(y) dx dy \\
&\quad
-2 \sum _{n=1}^\infty \frac{\alpha^{2n}}{n!} {\rm Re} \int _{\Lambda} \int _{\Lambda}
\left( \sum _{k\in {\mathbb Z}^2}\frac{\psi _N(k) \psi _{N+1}(k)}{1+|k|^2} {\bf e}_k (x) 
\overline{{\bf e}_k(y)}\right) ^n \overline{{\bf e}_\ell(x)} {\bf e}_\ell(y) dx dy \\
&\quad
+ \sum _{n=1}^\infty \frac{\alpha^{2n}}{n!} \int _{\Lambda} \int _{\Lambda}
\left( \sum _{k\in {\mathbb Z}^2}\frac{\psi _N(k)^2}{1+|k|^2} {\bf e}_k (x) \overline{{\bf e}_k(y)}\right) ^n 
\overline{{\bf e}_\ell(x)} {\bf e}_\ell(y) dx dy \\
&=
\sum _{n=1}^\infty \frac{\alpha^{2n}}{(2\pi )^{n} n!}
\sum _{\substack{k_1,k_2,\dots, k_n \in {\mathbb Z}^2;\\ k_1+k_2+\dots +k_n =\ell}}
\frac{\psi _{N+1}(k_1)^2\psi _{N+1}(k_2)^2 \cdots \psi _{N+1}(k_n)^2}
{(1+|k_1|^2)(1+|k_2|^2) \cdots (1+|k_n|^2)}\\
&\quad
-2 \sum _{n=1}^\infty \frac{\alpha^{2n}}{(2\pi )^{n} n!}
\sum _{\substack{k_1,k_2,\dots, k_n \in {\mathbb Z}^2;\\ k_1+k_2+\dots +k_n =\ell}}
\frac{\psi _{N}(k_1)\psi _{N+1}(k_1) \psi _{N}(k_2)\psi _{N+1}(k_2)\cdots \psi _N(k_n) \psi _{N+1}(k_n)}{(1+|k_1|^2)(1+|k_2|^2) \cdots (1+|k_n|^2)} \\
&\quad
+ \sum _{n=1}^\infty \frac{\alpha^{2n}}{(2\pi )^{n} n!}
\sum _{\substack{k_1,k_2,\dots, k_n \in {\mathbb Z}^2;\\ k_1+k_2+\dots +k_n =\ell}}
\frac{\psi _N(k_1)^2\psi _N(k_2)^2 \cdots \psi _N(k_n)^2}{(1+|k_1|^2)(1+|k_2|^2) \cdots (1+|k_n|^2)} \\
&= \sum _{n=1}^\infty \frac{\alpha^{2n}}{(2\pi )^{n} n!}
\sum _{\substack{k_1,k_2,\dots, k_n \in {\mathbb Z}^2;\\ k_1+k_2+\dots +k_n =\ell}}
\frac{\left[ \psi _{N+1}(k_1) \psi _{N+1}(k_2) \cdots \psi _{N+1}(k_n) - \psi _N(k_1) \psi _N(k_2) \cdots \psi _N(k_n) \right] ^2}{(1+|k_1|^2)(1+|k_2|^2) \cdots (1+|k_n|^2)} .
\end{align*}
Since the assumptions on $\psi$ yields that, for any $\lambda \in(0,\theta)$,
\begin{align*}
&|\psi _{N+1}(k_1) \psi _{N+1}(k_2) \cdots \psi _{N+1}(k_n) - \psi _N(k_1) \psi _N(k_2) \cdots \psi _N(k_n)| \\
&\leq \sum _{j=1}^n \left( \left| \psi (2^{-N-1}k_j) - 1 \right| + \left| \psi (2^{-N}k_j) - 1 \right| \right) \\
&\leq C 2^{-\lambda N} \sum _{j=1}^n |k_j|^\lambda,
\end{align*}
hence we have
\begin{align*}
&\int_{\mathcal{D}'(\Lambda)}\left\| \exp_{N+1}^\diamond(\alpha\phi) - \exp_N^\diamond(\alpha\phi) \right\| _{H^{-\beta}}^2\mu_0(d\phi)\\
&\leq C 2^{-\lambda N} \sum _{n=1}^\infty \frac{\alpha^{2n}}{(2\pi )^{n} (n-1)!}
\sum _{\ell\in {\mathbb Z}^2} \frac{1}{(1+|\ell|^2)^\beta}
\sum _{\substack{k_1,k_2,\dots, k_n \in {\mathbb Z}^2;\\ k_1+k_2+\dots +k_n =\ell}}
|k_1|^\lambda \prod _{m=1}^n \frac{1}{1+|k_m|^2}\\
&\leq C 2^{-\lambda N} \sum _{n=1}^\infty \frac{\alpha^{2n}}{(2\pi )^{n} (n-1)!}
\sum _{\ell\in {\mathbb Z}^2} \frac{1}{(1+|\ell|^2)^\beta}
\sum _{\substack{k_1,k_2,\dots, k_n \in {\mathbb Z}^2;\\ k_1+k_2+\dots +k_n =\ell}}
\frac1{(1+|k_1|^2)^{1-\lambda}}\prod _{m=2}^n \frac{1}{1+|k_m|^2}.
\end{align*}
By the Young's inequality,
\begin{align*}
&
\left( \int_{\mathcal{D}'(\Lambda)} \left( \sum _{N=1}^\infty \left\| \exp_{N+1}^\diamond(\alpha\phi) - \exp_N^\diamond(\alpha\phi) \right\| _{H^{-\beta}} \right) ^2 \mu_0(d\phi) \right) ^{1/2}
\\
&\leq \sum _{N=1}^\infty \left( \int_{\mathcal{D}'(\Lambda)} \left\| \exp_{N+1}^\diamond(\alpha\phi) - \exp_N^\diamond(\alpha\phi) \right\| _{H^{-\beta}}^2 \mu_0(d\phi) \right) ^{1/2}
\\
&\leq \sum _{N=1}^\infty 2^{-\lambda N/2}+ \sum _{N=1}^\infty 2^{\lambda N/2}
\int_{\mathcal{D}'(\Lambda)}
\left\| \exp_{N+1}^\diamond(\alpha\phi) - \exp_N^\diamond(\alpha\phi) \right\| _{H^{-\beta}} ^2 \mu_0(d\phi).
\end{align*}
In view of this inequality, for the almost sure and $L^2$-convergence of $\{ \exp_N^\diamond(\alpha\phi) \}$ it is sufficient to show
\begin{equation}\label{ineq:sffcondwexp}
\sum _{n=1}^\infty \frac{\alpha^{2n}}{(2\pi )^{n} (n-1)!}
\sum _{\ell\in {\mathbb Z}^2} \frac{1}{(1+|\ell|^2)^\beta}
\sum _{\substack{k_1,k_2,\dots, k_n \in {\mathbb Z}^2;\\ k_1+k_2+\dots +k_n =\ell}}
\frac1{(1+|k_1|^2)^{1-\lambda}}\prod _{m=2}^n \frac{1}{1+|k_m|^2}<\infty
\end{equation}
for sufficiently small $\lambda >0$.
By using the Green function
\begin{align*}
K^\gamma (x,y)&:= \sum _{k\in {\mathbb Z}^2} (1+|k|^2) ^{-\gamma} {\bf e}_k(x) \overline{{\bf e}_k(y)},
\end{align*}
of $(1-\triangle)^\gamma$ for $\gamma\in(0,1]$, we have
\begin{align*}
&\sum _{\ell\in {\mathbb Z}^2} \frac{1}{(1+|\ell|^2)^\beta}
\sum _{\substack{k_1,k_2,\dots, k_n \in {\mathbb Z}^2;\\ k_1+k_2+\dots +k_n =\ell}}
\frac1{(1+|k_1|^2)^{1-\lambda}}\prod _{m=2}^n \frac{1}{1+|k_m|^2}\\
&=(2\pi )^n \sum _{\ell\in {\mathbb Z}^2} \int _{\Lambda} \int _{\Lambda} \left( \sum _{k\in {\mathbb Z}^2}\frac{1}{(1+|k|^2)^{1-\lambda}} {\bf e}_k (x) \overline{{\bf e}_k(y)}\right) 
\left( \sum _{k\in {\mathbb Z}^2}\frac{1}{1+|k|^2} {\bf e}_k (x) \overline{{\bf e}_k(y)}\right) ^{n-1} \\ 
&\quad \hspace{10cm} \times \frac{1}{(1+|\ell^2|)^\beta} \overline{{\bf e}_l(x)} {\bf e}_l(y) dx dy \\
&= (2\pi )^n \int _{\Lambda} \int _{\Lambda} K^{1-\lambda} (x,y) (K^1(x,y))^{n-1} K^\beta (x,y) dx dy .
\end{align*}
By using the fact that
\begin{align*}
K^\gamma (x,y) &\leq C_\gamma ( 1+|x-y|^{2\gamma -2}), \quad \gamma \in (0,1),\\
K^1(x,y) &\leq C - \frac{1}{2\pi}\log (1\wedge |x-y|),
\end{align*}
(see \cite[Lemma 5.2]{MR99} or \cite[Proposition A.2]{AKMR19}) and an elementary inequality
\[
(x+c)^n \leq (1+\lambda )^{n-1} x^n + c^n \left( 1+\frac{1}{\lambda}\right) ^{n-1} , \quad x,c \in (0,\infty) ,\ n\in {\mathbb N} , 
\]
we have
\begin{align*}
&\sum _{\ell\in {\mathbb Z}^2} \frac{1}{(1+|\ell|^2)^\beta}
\sum _{\substack{k_1,k_2,\dots, k_n \in {\mathbb Z}^2;\\ k_1+k_2+\dots +k_n =\ell}}
\frac1{(1+|k_1|^2)^{1-\lambda}}\prod _{m=2}^n \frac{1}{1+|k_m|^2}\\
&\leq C(2\pi )^n \int _{|x|<1} \left( C - \frac{1}{2\pi}\log |x| \right)^{n-1} ( 1+|x|^{-2\lambda}) ( 1+|x|^{2\beta -2}) dx +C\\
&\leq C(2\pi)^n \left[ \left(1+\frac1\lambda\right)^{n-1}C^n
+ (1+\lambda )^{n-1} \frac1{(2\pi)^{n-1}}\int _{|x|<1} |x|^{2\beta -2-2\lambda} \left( - \log |x| \right)^{n-1} dx\right] \\
&\leq C^n+ C(1+\lambda)^{n-1}\int _0^1 r^{2(\beta -\lambda)-1} \left( - \log r \right)^{n-1} dr \\
&= C^n + C(1+\lambda)^{n-1}\int _0^\infty t^{n-1} e^{-2(\beta -\lambda) t} dt\\
&\leq C^n+ C\left( \frac{1+\lambda}{2(\beta -\lambda)} \right) ^{n-1} (n-1)!.
\end{align*}
Therefore, if $\alpha ^2 / (4\pi \beta )<1$, by choosing $\lambda \in (0,1)$ sufficiently small we obtain \eqref{ineq:sffcondwexp}.

We show the uniqueness. Let $\{\exp_N^{\diamond,1}(\alpha\phi)\}$ and $\{\exp_N^{\diamond,2}(\alpha\phi)\}$ be the sequences defined by the Fourier multipliers $\psi_1$ and $\psi_2$, respectively. Similarly to calculations above, by using the inequality
$$
|\psi_1(2^{-N}k)-\psi_2(2^{-N}k)|
\le|\psi_1(2^{-N}k)-1|+|\psi_2(2^{-N}k)-1|
\le C2^{-\lambda N}|k|^\lambda,
$$
we can conclude that
\begin{align*}
\int_{\mathcal{D}'(\Lambda)}\left\|\exp_N^{\diamond,1}(\alpha\phi)-\exp_N^{\diamond,2}(\alpha\phi)\right\|_{H^{-\beta}}^2\mu_0(d\phi)
\lesssim2^{-\lambda N}\xrightarrow{N\to\infty}0.
\end{align*}
Hence the limits $\exp^{\diamond,1}(\alpha\phi)$ and $\exp^{\diamond,2}(\alpha\phi)$ coincide as an element of $L^2(\mu_0;H^{-\beta})$.
\end{proof}

\subsection{$\exp(\Phi)_2$-quantum field}

Since $\exp^\diamond(\alpha\cdot)$ is a nonnegative distribution defined $\mu_0$-almost everywhere, we can define the $\exp(\Phi)_2$-measure.

\begin{cor}\label{cor:expmeas}
The $\exp(\Phi)_2$-measure $\mu^{(\alpha)}$ 
defined by \eqref{expphimeas} is well-defined as the limit of the approximating measures $\{\mu_N^{(\alpha)}\}$ defined by \eqref{expNmeas} in weak topology, and 
absolutely continuous with respect to $\mu_0$.
In particular, the support of $\mu^{(\alpha)}$ is in $H^{-\varepsilon}$ for $\varepsilon >0$.
Moreover, the Radon-Nikodym derivatives $\left\{\frac{d\mu_N^{(\alpha)}}{d\mu_0}\right\}$ are uniformly bounded.
\end{cor}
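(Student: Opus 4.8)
The plan is to extract everything from the $L^2(\mu_0;H^{-\beta})$-convergence established in Theorem \ref{thm:expgff}. First I would note that, writing $V_N(\phi):=\int_\Lambda \exp_N^\diamond(\alpha\phi)(x)\,dx = \langle \exp_N^\diamond(\alpha\phi),\mathbf{1}\rangle$, the pairing of $\exp_N^\diamond(\alpha\phi)$ with the constant function $\mathbf{1}\in H^{\beta}$ is a bounded linear functional on $H^{-\beta}$, so Theorem \ref{thm:expgff} gives $V_N(\phi)\to V(\phi):=\int_\Lambda\exp^\diamond(\alpha\phi)(x)\,dx$ both $\mu_0$-a.s. and in $L^2(\mu_0)$. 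Since each $\exp_N^\diamond(\alpha\phi)\ge0$ pointwise, $V_N(\phi)\ge0$, hence $V(\phi)\ge0$ $\mu_0$-a.s., and therefore $0\le e^{-V(\phi)}\le1$; likewise $0\le e^{-V_N(\phi)}\le1$. In particular the density $e^{-V}$ is a bounded (by $1$), nonnegative, $\mu_0$-integrable function, and $Z^{(\alpha)}=\int e^{-V}\,d\mu_0\in(0,1]$, with $Z^{(\alpha)}>0$ because $V<\infty$ $\mu_0$-a.s. This already shows $\mu^{(\alpha)}$ in \eqref{expphimeas} is a well-defined probability measure, absolutely continuous with respect to $\mu_0$ with density $\tfrac{1}{Z^{(\alpha)}}e^{-V}\le \tfrac{1}{Z^{(\alpha)}}$.

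Next I would handle the approximating measures. Each $\mu_N^{(\alpha)}$ in \eqref{expNmeas} has $\mu_0$-density $\tfrac{1}{Z_N^{(\alpha)}}e^{-V_N}$ with $Z_N^{(\alpha)}=\int e^{-V_N}\,d\mu_0\in(0,1]$; since $V_N\to V$ $\mu_0$-a.s. and $e^{-V_N}\le1$, the bounded convergence theorem yields $Z_N^{(\alpha)}\to Z^{(\alpha)}>0$, so $\sup_N \tfrac{1}{Z_N^{(\alpha)}}<\infty$, which is precisely the claimed uniform boundedness of the Radon--Nikodym derivatives $\tfrac{d\mu_N^{(\alpha)}}{d\mu_0}$. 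For the weak convergence, let $F\in C_b(H^{-\varepsilon})$; then
\begin{align*}
\int F\,d\mu_N^{(\alpha)}-\int F\,d\mu^{(\alpha)}
=\int F(\phi)\Big(\tfrac{e^{-V_N(\phi)}}{Z_N^{(\alpha)}}-\tfrac{e^{-V(\phi)}}{Z^{(\alpha)}}\Big)\mu_0(d\phi),
\end{align*}
and since $\mu_0$ is supported on $H^{-\varepsilon}$, $F$ is $\mu_0$-integrable and bounded, while the integrand in parentheses tends to $0$ $\mu_0$-a.s. and is dominated by $\tfrac{1}{Z_N^{(\alpha)}}+\tfrac{1}{Z^{(\alpha)}}\le C$; bounded convergence gives $\mu_N^{(\alpha)}\to\mu^{(\alpha)}$ weakly. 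Finally, $\mu^{(\alpha)}\ll\mu_0$ and the known full support of $\mu_0$ on $H^{-\varepsilon}$ combine with the strict positivity of the density $\tfrac1{Z^{(\alpha)}}e^{-V}>0$ $\mu_0$-a.s.\ to give $\mathrm{supp}(\mu^{(\alpha)})=H^{-\varepsilon}$ for every $\varepsilon>0$.

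The only genuine point requiring care — the ``main obstacle'' such as it is — is justifying that $V\ge0$ and $V<\infty$ hold $\mu_0$-a.s., i.e.\ that the a.s.\ limit of $V_N$ is a nonnegative real number rather than $+\infty$; this is exactly where the $L^2(\mu_0)$-convergence (not merely a.s.\ convergence of the distribution-valued objects) from Theorem \ref{thm:expgff} is used, since it forces $V\in L^2(\mu_0)$ and hence $V<\infty$ a.s., and the pointwise nonnegativity of the approximants is preserved under the limit. Everything else is bounded convergence applied to densities trapped in $[0,1]$, so no integrability subtleties arise beyond what Theorem \ref{thm:expgff} already supplies.
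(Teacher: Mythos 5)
Your proposal is correct, and its overall skeleton matches the paper's: positivity of the Wick exponentials makes all densities bounded by $1$, dominated/bounded convergence gives the weak convergence of $\mu_N^{(\alpha)}$ to $\mu^{(\alpha)}$, and absolute continuity with respect to $\mu_0$ yields both the Radon--Nikodym statement and the support claim. The one step you handle genuinely differently is the lower bound on the normalizing constants. The paper applies Jensen's inequality together with the exact normalization $\int_{\mathcal{D}'(\Lambda)}\exp_N^\diamond(\alpha\phi)(x)\,\mu_0(d\phi)=1$ to get the explicit bound $Z_N^{(\alpha)}\ge e^{-(2\pi)^2}$ uniformly in $N$, from which $Z^{(\alpha)}>0$ and the uniform bound on $\frac{d\mu_N^{(\alpha)}}{d\mu_0}\le 1/Z_N^{(\alpha)}$ follow at once. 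You instead deduce $Z^{(\alpha)}>0$ from the $\mu_0$-a.s.\ finiteness of $V=\langle\exp^\diamond(\alpha\phi),\mathbf{1}\rangle$ (supplied by the $L^2(\mu_0;H^{-\beta})$ and a.s.\ convergence in Theorem \ref{thm:expgff}), and obtain $\sup_N 1/Z_N^{(\alpha)}<\infty$ indirectly from $Z_N^{(\alpha)}\to Z^{(\alpha)}>0$ together with the (easy, and worth stating explicitly) positivity of each $Z_N^{(\alpha)}$, which holds because $V_N(\phi)<\infty$ for $\mu_0$-a.e.\ $\phi$ since $P_N\phi$ is continuous. Both routes are valid; the paper's Jensen argument buys a quantitative, $N$-independent constant without invoking the convergence of $Z_N^{(\alpha)}$, while yours is a soft argument leaning entirely on Theorem \ref{thm:expgff}. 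Note also that you prove the stronger statement $\mathrm{supp}(\mu^{(\alpha)})=H^{-\varepsilon}$ (strict positivity of the density plus full support of $\mu_0$), whereas the corollary only asserts that $\mu^{(\alpha)}$ is carried by $H^{-\varepsilon}$; this is harmless and in fact is the fact used later for the Dirichlet form.
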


\begin{proof}
From the positivity of $\exp_N^\diamond$, the function $\phi\mapsto\exp\{-\int_\Lambda\exp _N^\diamond(\alpha\phi)(x)dx\}$ is bounded by $1$, $\mu_0$-almost everywhere.
For the normalizing constant, by the dominated convergence theorem and Jensen's inequality,
\begin{align*}
Z^{(\alpha)}
&=\lim_{N\to\infty}\int_{\mathcal{D}'(\Lambda)}\exp \left\{ - \int_\Lambda\exp_N^\diamond(\alpha \phi )(x)dx\right\}\mu_0(d\phi)\\
&\ge\lim_{N\to\infty}\exp \left\{ - \int_{\mathcal{D}'(\Lambda)}
\mu_{0}(d\phi)
\int_\Lambda\exp_N^\diamond(\alpha \phi )(x)dx
\right\} =\exp\left\{-\int_\Lambda dx\right\}
=e^{-(2\pi)^2}>0.
\end{align*}
Here we use the fact that 
$\int_{
\mathcal{D}'(\Lambda)}
\exp_N^\diamond(\alpha\phi)(x)\mu_0(d\phi)=1$ ($x\in \Lambda$), which follows from the definition.
Hence, by the dominated convergence theorem again, $\mu^{(\alpha)}$ is defined as the limit of $\{\mu_N^{(\alpha)}\}$ in weak topology.
Absolute continuity and the boundedness of the Radon-Nikodym derivatives follows from the uniform boundedness of $\phi\mapsto\exp\{-\int_\Lambda\exp _N^\diamond(\alpha\phi)(x)dx\}$.
Absolute continuity of $\mu^{(\alpha)}$ with respect to $\mu_0$ and the fact that the support of $\mu _0$ is in $H^{-\varepsilon }$ for $\varepsilon >0$ immediately imply that the support of $\mu^{(\alpha)}$ is in $H^{-\varepsilon }$ for $\varepsilon >0$.
\end{proof}

\subsection{Wick exponential of the OU process}

For the OU process $X=X(\phi)$, we also define the approximating Wick exponential
\begin{equation*}
{\mathcal X}_t^{(\exp ,N)}(\phi) = \exp_N^\diamond(\alpha X_t(\phi))(x).
\end{equation*}
$\mathcal{X}_t^{(\exp,N)}$ can be regarded as a random variable on the product space $(\Omega\times\mathcal{D}'(\Lambda),\mathbb{P}\otimes \mu_0)$.

\begin{thm}\label{thm:expOU}
Let $|\alpha|<\sqrt{4\pi}$ and $\beta\in(\frac{\alpha^2}{4\pi},1)$.
Then $\{ {\mathcal X}^{(\exp ,N)} \}$ converges in $L^2([0,T];H^{-\beta})$ for any $T>0$, $\mathbb{P}\otimes\mu_0$-almost surely and in $L^2(\mathbb{P}\otimes\mu_0)$.
Moreover, the limit $\mathcal{X}^{(\exp,\infty)}$ is independent of the choice of $\psi$.
\end{thm}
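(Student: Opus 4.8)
The plan is to transfer the static estimates obtained in the proof of Theorem~\ref{thm:expgff} to the time-dependent setting by exploiting the invariance of $\mu_0$ under the OU semigroup. Since $\mu_0$ is an invariant measure of $X$ and $\mathcal{X}_t^{(\exp,N)}(\phi)=\exp_N^\diamond(\alpha X_t(\phi))$ is a measurable function of $X_t(\phi)$, for every fixed $t\ge 0$ the law of $X_t(\phi)$ under $\mathbb{P}\otimes\mu_0$ is exactly $\mu_0$ (at $t=0$ this is the hypothesis $X_0=\phi$, and for $t>0$ it is stationarity). Hence, for each $t$,
\[
\mathbb{E}^{\mathbb{P}\otimes\mu_0}\big[\|\mathcal{X}_t^{(\exp,N+1)}-\mathcal{X}_t^{(\exp,N)}\|_{H^{-\beta}}^2\big]
=\int_{\mathcal{D}'(\Lambda)}\big\|\exp_{N+1}^\diamond(\alpha\phi)-\exp_N^\diamond(\alpha\phi)\big\|_{H^{-\beta}}^2\,\mu_0(d\phi),
\]
and the right-hand side was shown in the proof of Theorem~\ref{thm:expgff} to be bounded by $C2^{-\lambda N}$ for a sufficiently small $\lambda>0$ depending only on $\alpha,\beta$, uniformly in $t$. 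Integrating over $t\in[0,T]$ and using Fubini's theorem, I obtain $\mathbb{E}^{\mathbb{P}\otimes\mu_0}[\|\mathcal{X}^{(\exp,N+1)}-\mathcal{X}^{(\exp,N)}\|_{L^2([0,T];H^{-\beta})}^2]\le CT2^{-\lambda N}$; in particular each $\mathcal{X}^{(\exp,N)}$ lies in $L^2(\mathbb{P}\otimes\mu_0;L^2([0,T];H^{-\beta}))$.

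With this geometric decay, the convergence follows exactly as in Theorem~\ref{thm:expgff}: by Minkowski's inequality in $L^2(\mathbb{P}\otimes\mu_0)$,
\[
\Big(\mathbb{E}^{\mathbb{P}\otimes\mu_0}\Big[\Big(\sum_{N=1}^\infty\|\mathcal{X}^{(\exp,N+1)}-\mathcal{X}^{(\exp,N)}\|_{L^2([0,T];H^{-\beta})}\Big)^2\Big]\Big)^{1/2}\le\sum_{N=1}^\infty\big(CT2^{-\lambda N}\big)^{1/2}<\infty,
\]
so that $\sum_{N}\|\mathcal{X}^{(\exp,N+1)}-\mathcal{X}^{(\exp,N)}\|_{L^2([0,T];H^{-\beta})}<\infty$ both $\mathbb{P}\otimes\mu_0$-almost surely and in $L^2(\mathbb{P}\otimes\mu_0)$. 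Consequently $\{\mathcal{X}^{(\exp,N)}\}$ is Cauchy in $L^2([0,T];H^{-\beta})$ for $\mathbb{P}\otimes\mu_0$-a.e.\ realization and in $L^2(\mathbb{P}\otimes\mu_0;L^2([0,T];H^{-\beta}))$, hence converges to a limit $\mathcal{X}^{(\exp,\infty)}$. Taking a sequence $T_n\uparrow\infty$ and intersecting the corresponding full-measure events gives the $\mathbb{P}\otimes\mu_0$-a.s.\ convergence in $L^2([0,T];H^{-\beta})$ simultaneously for every $T>0$.

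Finally, the independence of the limit from the cut-off $\psi$ is handled by the same device: if $\mathcal{X}^{(\exp,N),1}$ and $\mathcal{X}^{(\exp,N),2}$ are built from $\psi_1$ and $\psi_2$, then for each $t$ stationarity reduces $\mathbb{E}^{\mathbb{P}\otimes\mu_0}[\|\mathcal{X}_t^{(\exp,N),1}-\mathcal{X}_t^{(\exp,N),2}\|_{H^{-\beta}}^2]$ to $\int_{\mathcal{D}'(\Lambda)}\|\exp_N^{\diamond,1}(\alpha\phi)-\exp_N^{\diamond,2}(\alpha\phi)\|_{H^{-\beta}}^2\,\mu_0(d\phi)\lesssim 2^{-\lambda N}$, which was already established at the end of the proof of Theorem~\ref{thm:expgff}; integrating in $t$ and sending $N\to\infty$ shows the two limits coincide in $L^2([0,T];H^{-\beta})$ for all $T$. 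The only genuinely new ingredient compared with Theorem~\ref{thm:expgff} is the invariance of $\mu_0$ used to freeze the time marginal, so I do not expect a real obstacle; the mildest care needed is in checking joint measurability of $(t,\omega,\phi)\mapsto\mathcal{X}_t^{(\exp,N)}$ so that Fubini applies, which follows from the path continuity of $X$ provided by Proposition~\ref{prop:aprioriOU}.
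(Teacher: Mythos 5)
Your proposal is correct and follows essentially the same route as the paper: both reduce the time-dependent estimate to the static bound $\int_{\mathcal{D}'(\Lambda)}\|\exp_{N+1}^\diamond(\alpha\phi)-\exp_N^\diamond(\alpha\phi)\|_{H^{-\beta}}^2\,\mu_0(d\phi)\lesssim 2^{-\lambda N}$ from Theorem \ref{thm:expgff} by using the invariance of $\mu_0$ under the OU dynamics together with Fubini, and then sum the increments to obtain both $\mathbb{P}\otimes\mu_0$-a.s.\ and $L^2(\mathbb{P}\otimes\mu_0)$ convergence, with the independence of $\psi$ handled by the same reduction. The only cosmetic difference is that you apply Minkowski's inequality directly to the geometrically decaying second moments, whereas the paper uses the splitting $\sqrt{x}\le 2^{-\lambda N/2}+2^{\lambda N/2}x$; this is the same underlying estimate.
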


\begin{proof}
The proof is almost the same as that of Theorem \ref{thm:expgff}. By the invariance of $\mu_0$,
\begin{align*}
&\mathbb{E} \bigg [
 \int_{\mathcal{D}'(\Lambda)}\mu_0(d\phi)
\sum _{N=1}^\infty 
\left \{ \int_0^T\left\| \mathcal{X}_t^{(\exp,N+1)}(\phi) - \mathcal{X}_t^{(\exp,N)}(\phi) \right\| _{H^{-\beta}}^2dt
\right \}^{1/2}
\bigg ]
\\
&\le\sum_{N=1}^\infty2^{-\lambda N/2}
+\sum_{N=1}^\infty2^{\lambda N/2}
\mathbb{E}\bigg [ \int_{\mathcal{D}'(\Lambda)}\mu_0(d\phi)
\int_0^T\left\| \mathcal{X}_t^{(\exp,N+1)}(\phi) - \mathcal{X}_t^{(\exp,N)}(\phi) \right\| _{H^{-\beta}}^2dt
\bigg ]
\\
&\le\sum_{N=1}^\infty2^{-\lambda N/2}
+\sum_{N=1}^\infty2^{\lambda N/2}T
\int_{\mathcal{D}'(\Lambda)}\left\| \exp_{N+1}^\diamond(\alpha\phi) - \exp_N^\diamond(\alpha\phi) \right\| _{H^{-\beta}}^2\mu_0(d\phi)
<\infty.
\end{align*}
\end{proof}

We show the ``stability" of $\mathcal{X}^{(\exp,\infty)}$ with respect to $\phi$ in the following sense.

\begin{lem}\label{lem:contiexpOU}
Let $\xi_N$ and $\xi_\infty$ be $H^{-2}$-valued random variables independent to $W$.
Assume that the laws $\nu_N$ and $\nu_\infty$ of $\xi_N$ and $\xi$ respectively are absolutely continuous with respect to $\mu_0$, and their Radon-Nikodym derivatives $\frac{d\nu_N}{d\mu_0}$ and $\frac{d\nu_\infty}{d\mu_0}$ are uniformly bounded over $N$.
If $\xi_N$ converges to $\xi_\infty$ in $H^{-2}$ almost surely, then we have
$$
\mathcal{X}^{(\exp,\infty)}(\xi_N)\to
\mathcal{X}^{(\exp,\infty)}(\xi_\infty)
$$
in $L^2([0,T];H^{-\beta})$ for any $T>0$, in probability.
\end{lem}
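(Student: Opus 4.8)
The plan is to reduce the stability statement to the stability of the \emph{approximating} Wick exponentials $\mathcal{X}^{(\exp,N)}$ and then pass to the limit in $N$, exploiting the uniform bound on the Radon--Nikodym derivatives throughout. First I would write, for $X = X(\xi_N)$ and $Y = X(\xi_\infty)$ the OU processes started from $\xi_N$ and $\xi_\infty$ with the \emph{same} driving noise $W$ (legitimate since $\xi_N,\xi_\infty$ are independent of $W$), the explicit decomposition $X_t = e^{\frac12(\triangle-1)t}\xi_N + Z_t$ and $Y_t = e^{\frac12(\triangle-1)t}\xi_\infty + Z_t$, where $Z_t = \int_0^t e^{\frac12(\triangle-1)(t-s)}dW_s$ is the common stochastic convolution. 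Thus $P_N X_t$ and $P_N Y_t$ differ only through the deterministic-given-$\omega$ term $P_N e^{\frac12(\triangle-1)t}(\xi_N-\xi_\infty)$, whose $C(\Lambda)$-norm tends to $0$ almost surely for each fixed $N$ by the smoothing of $P_N$ (the bound $\|P_Nf\|_{H^2}\lesssim 2^{4N}\|f\|_{H^{-2}}$, Sobolev embedding, and the convergence $\xi_N\to\xi_\infty$ in $H^{-2}$). Since $H_n(\cdot;C_N)$ is a fixed polynomial, continuity of the map $u\mapsto \exp_N^\diamond(\alpha u) = \exp(\alpha u - \tfrac{\alpha^2}{2}C_N)$ from $C(\Lambda)$ to $C(\Lambda)\subset H^{-\beta}$, together with a dominated-convergence argument in $t$ (using that $P_N Z_t$ is $L^\infty$ in $t$ on $[0,T]$ in a suitable sense, or more simply a pathwise uniform bound), gives $\mathcal{X}^{(\exp,N)}(\xi_N)\to\mathcal{X}^{(\exp,N)}(\xi_\infty)$ in $L^2([0,T];H^{-\beta})$ almost surely, hence in probability, for each fixed $N$.

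Next I would control the two ``tail in $N$'' errors $\mathcal{X}^{(\exp,\infty)}(\xi_N)-\mathcal{X}^{(\exp,N)}(\xi_N)$ and $\mathcal{X}^{(\exp,\infty)}(\xi_\infty)-\mathcal{X}^{(\exp,N)}(\xi_\infty)$ \emph{uniformly in the starting point}. The point is that by Theorem \ref{thm:expOU} the quantity $\mathbb{E}\int_{\mathcal D'(\Lambda)}\mu_0(d\phi)\int_0^T\|\mathcal{X}_t^{(\exp,N+1)}(\phi)-\mathcal{X}_t^{(\exp,N)}(\phi)\|_{H^{-\beta}}^2 dt$ is summable in $N$; since $\nu_N = (\tfrac{d\nu_N}{d\mu_0})\,\mu_0$ with Radon--Nikodym derivative bounded by a constant $M$ independent of $N$, we get
\begin{align*}
\mathbb{E}\int_0^T\big\|\mathcal{X}_t^{(\exp,\infty)}(\xi_N)-\mathcal{X}_t^{(\exp,N)}(\xi_N)\big\|_{H^{-\beta}}^2 dt
&= \mathbb{E}\int_{\mathcal D'(\Lambda)}\nu_N(d\phi)\int_0^T\big\|\mathcal{X}_t^{(\exp,\infty)}(\phi)-\mathcal{X}_t^{(\exp,N)}(\phi)\big\|_{H^{-\beta}}^2 dt\\
&\le M\,\mathbb{E}\int_{\mathcal D'(\Lambda)}\mu_0(d\phi)\int_0^T\big\|\mathcal{X}_t^{(\exp,\infty)}(\phi)-\mathcal{X}_t^{(\exp,N)}(\phi)\big\|_{H^{-\beta}}^2 dt,
\end{align*}
which tends to $0$ as $N\to\infty$, uniformly in the sense that the same bound with $\nu_\infty$ in place of $\nu_N$ also holds. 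Therefore both tail errors go to $0$ in $L^2(\mathbb P; L^2([0,T];H^{-\beta}))$, hence in probability, uniformly over the family.

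Finally I would assemble the three pieces by the standard $\varepsilon/3$ argument: given $\delta>0$, choose $N$ large so that both tail errors have $L^2$-norm (and hence probability of exceeding $\delta/3$) small uniformly, then use the fixed-$N$ almost sure convergence from the first step to handle the middle term $\mathcal{X}^{(\exp,N)}(\xi_N)-\mathcal{X}^{(\exp,N)}(\xi_\infty)$ for all $n$ large. Combining via the triangle inequality in $L^2([0,T];H^{-\beta})$ and a union bound over the three events yields convergence in probability. The main obstacle I anticipate is the fixed-$N$ step: one must argue carefully that the almost-sure convergence $P_N e^{\frac12(\triangle-1)t}(\xi_N-\xi_\infty)\to 0$ in $C(\Lambda)$ combined with the exponential nonlinearity (which is only locally Lipschitz) still yields $L^2$-in-time convergence of $\exp_N^\diamond(\alpha P_N X_t)$; this is where one needs a pathwise domination, e.g.\ by $\sup_{t\le T}\|P_N Z_t\|_{C(\Lambda)}<\infty$ a.s.\ (itself a consequence of Proposition \ref{prop:aprioriOU} and the smoothing of $P_N$), to apply dominated convergence in $t$.
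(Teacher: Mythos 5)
Your proposal is correct and follows essentially the same route as the paper: stability at a fixed cutoff level by exploiting that the two OU processes share the same stochastic convolution (so their difference is the smoothed semigroup term, controlled by $2^{4M}\|\xi_N-\xi_\infty\|_{H^{-2}}$), uniform-in-$N$ control of the cutoff-versus-limit error via the uniformly bounded Radon--Nikodym derivatives and the invariance of $\mu_0$, and then a three-term triangle argument taking the initial-data limit before the cutoff limit. The only blemish is notational: you use $N$ both for the Wick-exponential cutoff and for the index of $\xi_N$, whereas these must be two independent indices (the paper's $M$ and $N$), but your stated uniformity and order of limits show you intend exactly the correct two-index argument.
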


\begin{proof}
Let $M\in\mathbb{N}$.
By the mild form \eqref{eq:OUmild} of $X$, we have
\begin{align*}
\|P_MX_t(\xi_N)-P_MX_t(\xi_\infty)\|_{C([0,T];C(\Lambda))}
&\lesssim\sup_{t\in[0,T]}\|e^{\frac12(\triangle-1)t}P_M(\xi_N-\xi_\infty)\|_{H^2}\\
&\le 2^{4M}\|\xi_N-\xi_\infty\|_{H^{-2}}
\xrightarrow{N\to\infty}0,
\end{align*}
almost surely. Hence for any fixed $M\in\mathbb{N}$,
\begin{align*}
\mathcal{X}^{(\exp,M)}(\xi_N)
&=\exp\left(\alpha P_MX(\xi_N)-\frac{\alpha^2}2C_M\right)\\
&\to\exp\left(\alpha P_MX(\xi_\infty)-\frac{\alpha^2}2C_M\right)
=\mathcal{X}^{(\exp,M)}(\xi_\infty)
\end{align*}
in $C([0,T];C(\Lambda))$ almost surely.
On the other hand, since the Radon-Nikodym derivatives 
$\frac{d\nu_N}{d\mu_0}$ and $\frac{d\nu_\infty}{d\mu_0}$ 
are uniformly bounded, by using invariance of $\mu _0$ with respect to $X_t$ we have
\begin{align*}
&\sup_{N\in\mathbb{N}\cup\{\infty\}}\mathbb{E}
\Big [
\|\mathcal{X}^{(\exp,M)}(\xi_N)-\mathcal{X}^{(\exp,\infty)}(\xi_N)\|_{L^2([0,T];H^{-\beta})}^2
\Big ]
\\
&\lesssim \mathbb{E} \left[ \int_{\mathcal{D}'(\Lambda)}\|\mathcal{X}^{(\exp,M)}(\phi)-\mathcal{X}^{(\exp,\infty)}(\phi)\|_{L^2([0,T];H^{-\beta})}^2 \mu_0(d\phi) \right]
\\
&= T \int_{\mathcal{D}'(\Lambda)}\| \exp_M^\diamond(\alpha \phi ) - \exp ^\diamond(\alpha \phi )\|_{H^{-\beta}} ^2 \mu_0(d\phi) .
\end{align*}
Hence, by Corollary \ref{cor:expmeas} we have
\[
\sup_{N\in\mathbb{N}\cup\{\infty\}}\mathbb{E}
\Big [
\|\mathcal{X}^{(\exp,M)}(\xi_N)-\mathcal{X}^{(\exp,\infty)}(\xi_N)\|_{L^2([0,T];H^{-\beta})}
\Big ]
\xrightarrow{M\to\infty}0.
\]
By using the inequality $(a+b)\wedge1\le a+(b\wedge1)$ for $a,b\ge0$, we have
\begin{align*}
&\mathbb{E}\left[\|\mathcal{X}^{(\exp,\infty)}(\xi_N)-
\mathcal{X}^{(\exp,\infty)}(\xi_\infty)\|_{L^2([0,T];H^{-\beta})}\wedge1\right]\\
&\le 2\sup_{N\in\mathbb{N}\cup\{\infty\}}\mathbb{E}
\Big [
\|\mathcal{X}^{(\exp,M)}(\xi_N)-\mathcal{X}^{(\exp,\infty)}(\xi_N)\|_{L^2([0,T];H^{-\beta})}
\Big ]
\\
&\quad+\mathbb{E}\left[\|\mathcal{X}^{(\exp,M)}(\xi_N)-
\mathcal{X}^{(\exp,M)}(\xi_\infty)\|_{L^2([0,T];H^{-\beta})}\wedge1\right].
\end{align*}
In the right-hand side, by letting $N\to\infty$ first and then $M\to\infty$, we have the required convergence result.
\end{proof}


\section{Global well-posendess of the strong solution}\label{sec:wellposed}

In this section, we consider the approximating equation \eqref{expsqe1}.
To show Theorem \ref{mainthm1}, we use the Da Prato-Debussche argument. Precisely, we decompose $\Phi^N=X^N+Y^N$, where $X^N$ and $Y^N$ solve
\begin{align}
\label{eq:X}
&\left\{
\begin{aligned}
\partial _t X_t^N &= \frac 12 (\triangle-1) X_t^N + P_N\dot W_t ,\\
X_0^N&=P_N\phi,
\end{aligned}
\right.\\[5pt]
\label{eq:Y}
&\left\{
\begin{aligned}
\partial _t Y_t^N &= \frac 12 (\triangle-1) Y_t^N
- \frac\alpha2 \exp(\alpha Y_t^N)\exp\left(\alpha X_t^N -\frac{\alpha^2}2C_N\right) ,\\
Y_0^N&=0.
\end{aligned}
\right.
\end{align}
Note that $X^N=P_NX(\phi)$, where $X(\phi)$ is the solution of \eqref{eq:OU} with the initial value $\phi$. Hence the renormalized exponential of $X^N$ in the latter equation is equal to
$$
\exp\left(\alpha X_t^N -\frac{\alpha^2}2C_N\right)=\mathcal{X}_t^{(\exp,N)}(\phi).
$$
Since $\mathcal{X}^{(\exp,N)}$ converges to an $L^2([0,T];H^{-\beta})$-valued nonnegative random variable $\mathcal{X}^{(\exp,\infty)}$, in this section we consider the \emph{deterministic} equation
\begin{align*}
\partial_t\Upsilon_t&=\frac12(\triangle-1)\Upsilon_t-\frac\alpha2 e^{\alpha \Upsilon_t}\mathcal{X}_t
\end{align*}
for any generic nonnegative $\mathcal{X}\in L^2([0,T];H^{-\beta})$.

\subsection{Products of continuous functions and nonnegative distributions}\label{sec:nnegdist}

A distribution $\xi\in\mathcal{D}'(\Lambda)$ is said to be \emph{nonnegative} if $\xi(\varphi)\ge0$ for any nonnegative $\varphi\in\mathcal{D}(\Lambda)$.
The product of $f\in C(\Lambda)$ and $\xi\in\mathcal{D}'(\Lambda)$ is ill-defined in general, but if $\xi$ is nonnegative, then such product is well-defined in the following sense.

\begin{thm}[{\cite[Theorem 6.22]{LL01}}]\label{thm:LL}
For any nonnegative $\xi\in\mathcal{D}'(\Lambda)$, there exists a unique nonnegative Borel measure $\mu_\xi$ such that
$$
\xi(\varphi)=\int_{\Lambda}\varphi(x)\mu_\xi(dx),\quad
\varphi\in\mathcal{D}(\Lambda).
$$
Consequently, the domain of $\xi$ is extended to $C(\Lambda)$.
\end{thm}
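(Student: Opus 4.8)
The plan is to reduce the statement to the Riesz--Markov--Kakutani representation theorem, the only nontrivial input being that nonnegativity automatically upgrades $\xi$ to a functional continuous in the uniform norm. First I would exploit compactness of $\Lambda=\mathbb{T}^2$: the constant function $\mathbf{1}$ belongs to $\mathcal{D}(\Lambda)$, and for every real-valued $\varphi\in\mathcal{D}(\Lambda)$ the functions $\|\varphi\|_\infty\mathbf{1}-\varphi$ and $\|\varphi\|_\infty\mathbf{1}+\varphi$ are nonnegative and smooth. Applying $\xi$ to both and using $\xi\ge0$ gives $|\xi(\varphi)|\le\xi(\mathbf{1})\|\varphi\|_\infty$, so $\xi$ is a bounded positive linear functional on $(\mathcal{D}(\Lambda),\|\cdot\|_\infty)$ with norm $\xi(\mathbf{1})$.

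Next, since $\mathcal{D}(\Lambda)$ is dense in $C(\Lambda)$ for the uniform norm (by periodic mollification), $\xi$ extends uniquely to a bounded linear functional $\bar\xi$ on $C(\Lambda)$. This extension stays nonnegative: if $f\in C(\Lambda)$ with $f\ge0$, then the mollifications of $\tilde f$ are nonnegative elements of $\mathcal{D}(\Lambda)$ converging uniformly to $f$, whence $\bar\xi(f)=\lim\xi(\rho_n*\tilde f)\ge0$. Thus $\bar\xi$ is a positive linear functional on $C(\Lambda)$ with $\Lambda$ compact, and the Riesz--Markov--Kakutani theorem produces a unique finite regular nonnegative Borel measure $\mu_\xi$ with $\bar\xi(f)=\int_\Lambda f\,d\mu_\xi$ for all $f\in C(\Lambda)$; restricting to $\varphi\in\mathcal{D}(\Lambda)$ yields the claimed identity. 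For uniqueness of $\mu_\xi$ among all Borel measures representing $\xi$ on $\mathcal{D}(\Lambda)$: two such measures have equal integrals against every $\varphi\in\mathcal{D}(\Lambda)$, hence (by density again) against every $f\in C(\Lambda)$, and a finite Borel measure on a compact metric space is determined by these values. The asserted extension of the domain of $\xi$ to $C(\Lambda)$ is then simply the definition $\xi(f):=\int_\Lambda f\,d\mu_\xi$, which agrees with the original pairing on $\mathcal{D}(\Lambda)$.

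The main obstacle, such as it is, is the uniform estimate $|\xi(\varphi)|\le\xi(\mathbf{1})\|\varphi\|_\infty$: this is the one place where nonnegativity is used essentially, converting the derivative-dependent local continuity built into the definition of a distribution into a zeroth-order bound. On a general open domain this step would require a partition of unity together with nonnegative cutoff functions dominating $\varphi$ on its support; compactness of the torus (so that $\mathbf{1}$ itself is an admissible test function) makes it immediate. Everything after that is the standard Riesz representation machinery, so I do not expect further difficulty.
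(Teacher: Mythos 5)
Your proposal is correct; the paper gives no proof of this statement, citing it as \cite[Theorem 6.22]{LL01}, and your argument is essentially the standard one from that reference: positivity yields the uniform bound $|\xi(\varphi)|\le\xi(\mathbf{1})\,\|\varphi\|_{\infty}$ (with the compactness of $\Lambda=\mathbb{T}^2$ letting the constant function replace the cutoffs needed on a general open set), then density of $C^{\infty}(\Lambda)$ in $C(\Lambda)$ and the Riesz--Markov--Kakutani theorem finish the existence, while testing against $\mathbf{1}$ forces any representing measure to be finite, giving uniqueness. No gaps.
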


\begin{defi}
For any nonnegative $\xi\in\mathcal{D}'(\Lambda)$ and any $f\in C(\Lambda)$, we define the Borel measure
$$
\mathcal{M}(f,\xi)(dx):=f(x)\mu_\xi(dx)
$$
where $\mu _\xi (dx)$ is the measure obtained in Theorem \ref{thm:LL}.
\end{defi}

We prove some properties of $\mathcal{M}$. First we recall the following basic result.

\begin{prop}[{\cite[Theorem 2.34]{BCD11}}]\label{prop:heat-besov}
For any $s>0$ and $p,q\in[1,\infty]$, one has the equivalence of norms
$$
\|\xi\|_{B_{p,q}^{-s}}\simeq\|e^{\triangle}\xi\|_{L^p(\Lambda)}+\left\|t^{\frac{s}2}\|e^{t\triangle}\xi\|_{L^p(\Lambda)}\right\|_{L^q([0,1];\frac{dt}t)}.
$$
\end{prop}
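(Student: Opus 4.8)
The statement is the standard heat‑kernel characterization of negative‑order Besov spaces, so the plan is to reduce it to Littlewood--Paley theory: introduce the dyadic block decomposition $\xi=\sum_{j\ge-1}\Delta_j\xi$ with $\|\xi\|_{B^{-s}_{p,q}}\simeq\big\|(2^{-js}\|\Delta_j\xi\|_{L^p})_{j\ge-1}\big\|_{\ell^q}$, and compare the right‑hand side of the claimed equivalence with this discrete quantity. On $\mathbb T^2$ the heat semigroup is the Fourier multiplier $e^{-t|k|^2}$, which fixes the zero mode, so the endpoint term $\|e^{\triangle}\xi\|_{L^p}$ (at $t=1$) is what replaces the low‑frequency block $\Delta_{-1}\xi$ and absorbs constants. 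Everything rests on two multiplier estimates. First, the heat flow contracts a frequency‑$2^j$ block exponentially: for $j\ge0$, $\|e^{t\triangle}\Delta_j\xi\|_{L^p}\lesssim e^{-ct2^{2j}}\|\Delta_j\xi\|_{L^p}$ uniformly in $t\ge0$, $j$, $p$, while $\|e^{t\triangle}\Delta_{-1}\xi\|_{L^p}\lesssim\|\Delta_{-1}\xi\|_{L^p}$. Second, a matching ``inverse flow'' bound valid only at small times: $\|\Delta_j\xi\|_{L^p}\lesssim\|e^{t\triangle}\xi\|_{L^p}$ for every $0<t\le2^{-2j}$ (and $\|\Delta_{-1}\xi\|_{L^p}\lesssim\|e^{\triangle}\xi\|_{L^p}$), which holds because on the annulus $|k|\sim2^j$ the symbol $e^{-t|k|^2}$ is then bounded below, hence invertible with a uniformly controlled multiplier.

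\textbf{The multiplier estimates.} To get these on the torus I would invoke the transference principle: a multiplier $m\colon\mathbb Z^2\to\mathbb C$ which is the restriction of some $\widetilde m$ on $\mathbb R^2$ whose inverse Fourier transform is a finite measure has $L^p(\mathbb T^2)$‑operator norm bounded by $\|\mathcal F^{-1}_{\mathbb R^2}\widetilde m\|_{\mathrm{TV}}$, uniformly in $p\in[1,\infty]$. For the contraction bound the symbol is $e^{-t|\zeta|^2}\varphi(2^{-j}\zeta)$ ($\varphi$ the Littlewood--Paley bump supported in $|\zeta|\sim1$); rescaling $\zeta=2^j\eta$ reduces the claim to $\|\mathcal F^{-1}_{\mathbb R^2}[e^{-\tau|\eta|^2}\varphi(\eta)]\|_{L^1}\lesssim e^{-c\tau}$ with $\tau=t2^{2j}$. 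This follows by repeated integration by parts: on the fixed annulus $\operatorname{supp}\varphi$ each derivative of $e^{-\tau|\eta|^2}\varphi(\eta)$ is $O\!\big(e^{-c\tau}(1+\tau)^{C}\big)$, so the inverse transform decays like $|x|^{-M}$ with constant $e^{-c\tau}(1+\tau)^{C}\lesssim e^{-c'\tau}$. The inverse‑flow estimate is the same argument with symbol $\varphi(2^{-j}\zeta)e^{t|\zeta|^2}$, i.e. after rescaling $\varphi(\eta)e^{\tau|\eta|^2}$ with $\tau=t2^{2j}\le1$, a smooth compactly supported function with $L^1$ inverse transform bounded uniformly for $\tau\in[0,1]$.

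\textbf{Assembly.} For the inequality ``$\lesssim$'', write $\|e^{t\triangle}\xi\|_{L^p}\le\sum_{j\ge-1}\|e^{t\triangle}\Delta_j\xi\|_{L^p}\lesssim\|\Delta_{-1}\xi\|_{L^p}+\sum_{j\ge0}e^{-ct2^{2j}}\|\Delta_j\xi\|_{L^p}$; multiply by $t^{s/2}$, take the $L^q([0,1];dt/t)$‑norm, split the $j$‑sum at $2^{2j}\sim1/t$, and use $\big(\int_0^1(t^{s/2}e^{-ct2^{2j}})^q\tfrac{dt}t\big)^{1/q}\lesssim2^{-js}$ together with a discrete Young (Schur‑test) estimate, while the $t=1$ contribution is exactly $\|e^{\triangle}\xi\|_{L^p}$. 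For ``$\gtrsim$'', from $\|\Delta_j\xi\|_{L^p}\lesssim\|e^{t\triangle}\xi\|_{L^p}$ on $t\in I_j:=[2^{-2j-2},2^{-2j}]$ average over $I_j$ to get $2^{-jsq}\|\Delta_j\xi\|_{L^p}^q\lesssim\int_{I_j}(t^{s/2}\|e^{t\triangle}\xi\|_{L^p})^q\tfrac{dt}t$ (using $t^{s/2}\simeq2^{-js}$ there); since the $I_j$ are disjoint and cover $(0,1]$, summing over $j\ge0$ bounds $\sum_{j\ge0}2^{-jsq}\|\Delta_j\xi\|_{L^p}^q$ by the integral term, and $\|\Delta_{-1}\xi\|_{L^p}\lesssim\|e^{\triangle}\xi\|_{L^p}$. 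The cases $q=\infty$ or $p=\infty$ require only replacing integrals/sums by suprema. The main obstacle is the first step, namely establishing the $L^1$ kernel bound with its \emph{exponential} gain $e^{-c\tau}$ rather than mere boundedness: one has to track precisely how the derivatives of $e^{-\tau|\eta|^2}$ grow on the annulus and check that the polynomial‑in‑$\tau$ losses from integration by parts are dominated by the exponential. Once that is in place, the rest is bookkeeping.
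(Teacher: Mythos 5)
Your proof is correct, but note that the paper does not prove this proposition at all: it is quoted as \cite[Theorem 2.34]{BCD11}, so the only ``comparison'' to be made is with the textbook argument. What you write is essentially that argument, and it is sound: the two multiplier estimates (exponential damping $\|e^{t\triangle}\Delta_j\xi\|_{L^p}\lesssim e^{-ct2^{2j}}\|\Delta_j\xi\|_{L^p}$, and the inverse-flow bound for $t\le 2^{-2j}$, both obtained from an $L^1$ kernel bound after rescaling) are exactly the analogues of the spectral-localization lemmas used in \cite{BCD11}, and the Schur/Young bookkeeping and the dyadic time intervals $I_j$ for the converse are the standard assembly. Where your write-up adds genuine value is in adapting the statement to the setting actually used in the paper: \cite[Theorem 2.34]{BCD11} is formulated on $\mathbb{R}^d$ (with the low-frequency part handled through the class $\mathcal{S}'_h$), whereas the proposition here is on $\Lambda=\mathbb{T}^2$, with time restricted to $[0,1]$ and the zero mode absorbed by the extra term $\|e^{\triangle}\xi\|_{L^p}$; your transference step for multipliers restricted to $\mathbb{Z}^2$ and your treatment of the block $\Delta_{-1}$ are precisely what bridges this gap (for the low block one can even argue more simply, since $e^{t\triangle}$ is an $L^p$-contraction by positivity of the heat kernel). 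Two cosmetic points: the phrase ``the $t=1$ contribution is exactly $\|e^{\triangle}\xi\|_{L^p}$'' should rather say that the endpoint term is bounded by $\|\xi\|_{B^{-s}_{p,\infty}}$ via the same block estimate at $t=1$; and in the damping estimate only $t\in[0,1]$ is needed, so the uniformity in all $t\ge0$ is not essential. Neither affects the validity of the argument.
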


\begin{thm}[\cite{Gar18}]\label{thm:func*dist}
Let $s>0$ and $p,q\in[1,\infty]$. There exists a constant $C>0$ such that, one has
$$
\|\mathcal{M}(f,\xi)\|_{B_{p,q}^{-s}}\le C\|f\|_{C(\Lambda)}\|\xi\|_{B_{p,q}^{-s}}
$$
for any nonnegative $\xi\in B_{p,q}^{-s}$ and $f\in C(\Lambda)$.
\end{thm}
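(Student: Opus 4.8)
The plan is to reduce the bound on $\mathcal{M}(f,\xi)$ to a pointwise estimate on the heat semigroup applied to the measure $\mu_\xi$, and then invoke the characterization of Besov norms in Proposition \ref{prop:heat-besov}. First I would write, for $t\in(0,1]$,
\[
(e^{t\triangle}\mathcal{M}(f,\xi))(x)=\int_\Lambda p_t(x,y)f(y)\,\mu_\xi(dy),
\]
where $p_t(x,y)\ge0$ is the periodic heat kernel. Since $\mu_\xi$ is a nonnegative measure (Theorem \ref{thm:LL}) and $p_t\ge0$, the integrand is dominated in absolute value by $\|f\|_{C(\Lambda)}\,p_t(x,y)$, so that pointwise
\[
|(e^{t\triangle}\mathcal{M}(f,\xi))(x)|\le\|f\|_{C(\Lambda)}\int_\Lambda p_t(x,y)\,\mu_\xi(dy)=\|f\|_{C(\Lambda)}\,(e^{t\triangle}\xi)(x),
\]
using that $e^{t\triangle}\xi$, computed via its action on the continuous function $p_t(x,\cdot)$, coincides with $\int p_t(x,y)\mu_\xi(dy)$ and is itself nonnegative. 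The same estimate holds at $t=1$ (or any fixed reference time) for the first term in the Besov characterization.

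Next I would take $L^p(\Lambda)$-norms in $x$ on both sides, which preserves the inequality because everything is nonnegative: $\|e^{t\triangle}\mathcal{M}(f,\xi)\|_{L^p}\le\|f\|_{C(\Lambda)}\|e^{t\triangle}\xi\|_{L^p}$ for every $t\in(0,1]$, and similarly $\|e^{\triangle}\mathcal{M}(f,\xi)\|_{L^p}\le\|f\|_{C(\Lambda)}\|e^{\triangle}\xi\|_{L^p}$. Multiplying by $t^{s/2}$ and taking the $L^q([0,1];dt/t)$-norm in $t$ gives
\[
\left\|t^{\frac s2}\|e^{t\triangle}\mathcal{M}(f,\xi)\|_{L^p}\right\|_{L^q([0,1];\frac{dt}t)}\le\|f\|_{C(\Lambda)}\left\|t^{\frac s2}\|e^{t\triangle}\xi\|_{L^p}\right\|_{L^q([0,1];\frac{dt}t)}.
\]
Adding the two contributions and applying Proposition \ref{prop:heat-besov} to both $\mathcal{M}(f,\xi)$ and $\xi$ yields $\|\mathcal{M}(f,\xi)\|_{B_{p,q}^{-s}}\lesssim\|f\|_{C(\Lambda)}\|\xi\|_{B_{p,q}^{-s}}$, which is the claim.

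I expect the only genuine subtlety — and hence the main obstacle — to be the bookkeeping needed to justify identifying $e^{t\triangle}\xi$ with the function $x\mapsto\int_\Lambda p_t(x,y)\,\mu_\xi(dy)$, i.e. checking that the distributional action of $\xi$ on the smooth test function $p_t(x,\cdot)$ agrees with integration against $\mu_\xi$ (Theorem \ref{thm:LL} extends $\xi$ to $C(\Lambda)$, and $p_t(x,\cdot)$ is smooth, so this is legitimate but worth a line), and that this function indeed lies in $L^p$ with the stated norm control, rather than merely being measurable. One also needs $\xi\in B_{p,q}^{-s}$ to ensure the right-hand side is finite, which is part of the hypothesis. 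Everything else is the positivity-monotonicity argument above, which is essentially immediate once the heat-kernel representation is in place; no frequency decomposition or paraproduct estimate is required precisely because nonnegativity lets us bound $|f|$ by its sup norm inside the (nonnegative) integral.
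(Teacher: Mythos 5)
Your argument is correct and is essentially identical to the paper's proof: both bound $|e^{t\triangle}\mathcal{M}(f,\xi)(x)|$ pointwise by $\|f\|_{C(\Lambda)}(e^{t\triangle}\xi)(x)$ using positivity of the heat kernel and of $\mu_\xi$, and then conclude via the heat-semigroup characterization of Besov norms in Proposition \ref{prop:heat-besov}. Your extra remarks on identifying $e^{t\triangle}\xi$ with $\int_\Lambda p_t(\cdot,y)\,\mu_\xi(dy)$ just make explicit a step the paper leaves implicit.
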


\begin{proof}
Since the heat kernel $p_t(x,y)$ associated with $e^{t\triangle}$ is positive, we have
\begin{align*}
|e^{t\Delta}\mathcal{M}(f,\xi)(x)|
&=\left|\int_{\Lambda} p_t(x,y)f(y)\mu_\xi(dy)\right|\\
&\le\|f\|_{C(\Lambda)}\int_{\Lambda}
p_t(x,y)\mu_\xi(dy)
=\|f\|_{C(\Lambda)}(e^{t\Delta}\xi)(x).
\end{align*}
Hence the result follows from Proposition \ref{prop:heat-besov}.
\end{proof}

\begin{thm}\label{thm:stableM}
Let $s>0$ and $p,q\in[1,\infty]$. Denote by $B_{p,q}^{-s,+}$ the subspace of all nonnegative elements in $B_{p,q}^{-s}$. The map
$$
\mathcal{M}:C(\Lambda)\times B_{p,q}^{-s,+}\to B_{p,q}^{-s}
$$
is continuous.
\end{thm}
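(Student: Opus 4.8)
The plan is to exploit the bilinear structure of $\mathcal{M}$ together with the a priori bound of Theorem \ref{thm:func*dist}, and to reduce continuity in the distributional variable to the case of a \emph{smooth} first variable. Since $C(\Lambda)$ and $B_{p,q}^{-s}$ are normed spaces, it suffices to check sequential continuity: suppose $f_n\to f$ in $C(\Lambda)$ and $\xi_n\to\xi$ in $B_{p,q}^{-s}$ with all $\xi_n,\xi\in B_{p,q}^{-s,+}$; I want to show $\mathcal{M}(f_n,\xi_n)\to\mathcal{M}(f,\xi)$ in $B_{p,q}^{-s}$. Set $R:=\sup_n\|\xi_n\|_{B_{p,q}^{-s}}<\infty$, which is finite because a convergent sequence is bounded. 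Given $\delta>0$, choose $g\in C^\infty(\Lambda)$ with $\|f-g\|_{C(\Lambda)}<\delta$ and decompose
\[
\mathcal{M}(f_n,\xi_n)-\mathcal{M}(f,\xi)=\mathcal{M}(f_n-g,\xi_n)+\bigl(\mathcal{M}(g,\xi_n)-\mathcal{M}(g,\xi)\bigr)+\mathcal{M}(g-f,\xi),
\]
where I use that, for a fixed nonnegative $\eta$, $\mathcal{M}(h_1,\eta)-\mathcal{M}(h_2,\eta)=\mathcal{M}(h_1-h_2,\eta)$ since both sides equal $(h_1-h_2)\mu_\eta$.

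For the first and third terms, Theorem \ref{thm:func*dist} gives at once
\[
\|\mathcal{M}(f_n-g,\xi_n)\|_{B_{p,q}^{-s}}\le C\|f_n-g\|_{C(\Lambda)}\,R,\qquad
\|\mathcal{M}(g-f,\xi)\|_{B_{p,q}^{-s}}\le C\delta\,\|\xi\|_{B_{p,q}^{-s}}.
\]
The middle term is the delicate one, and here smoothness of $g$ enters: for every nonnegative $\eta\in\mathcal{D}'(\Lambda)$ the measure $\mathcal{M}(g,\eta)$ coincides, as a distribution, with the ordinary product $g\eta$, because for $\varphi\in\mathcal{D}(\Lambda)$ one has $\mathcal{M}(g,\eta)(\varphi)=\int_\Lambda g\varphi\,d\mu_\eta=\eta(g\varphi)=(g\eta)(\varphi)$ by Theorem \ref{thm:LL}, the point being that $g\varphi\in\mathcal{D}(\Lambda)$. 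Hence $\mathcal{M}(g,\xi_n)-\mathcal{M}(g,\xi)=g(\xi_n-\xi)$, and multiplication by a fixed smooth function is a bounded operator on $B_{p,q}^{-s}$ for all $s\in\mathbb{R}$, $p,q\in[1,\infty]$ (Bony's paraproduct estimates; see e.g.\ \cite{BCD11}), so that
\[
\|\mathcal{M}(g,\xi_n)-\mathcal{M}(g,\xi)\|_{B_{p,q}^{-s}}=\|g(\xi_n-\xi)\|_{B_{p,q}^{-s}}\le C_g\,\|\xi_n-\xi\|_{B_{p,q}^{-s}}\xrightarrow{n\to\infty}0.
\]

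Putting the three estimates together and taking $\limsup_{n\to\infty}$, using $\|f_n-g\|_{C(\Lambda)}\le\|f_n-f\|_{C(\Lambda)}+\delta\to\delta$, I obtain
\[
\limsup_{n\to\infty}\|\mathcal{M}(f_n,\xi_n)-\mathcal{M}(f,\xi)\|_{B_{p,q}^{-s}}\le C\delta\bigl(R+\|\xi\|_{B_{p,q}^{-s}}\bigr),
\]
and letting $\delta\downarrow0$ forces the left-hand side to vanish, which is the assertion. The one genuine obstacle is the middle term: since $\xi_n-\xi$ need not be nonnegative, Theorem \ref{thm:func*dist} does not apply to it directly, and the purpose of approximating $f$ by a smooth $g$ is precisely that for smooth multipliers $\mathcal{M}$ reduces to the ordinary (sign-insensitive) product, whose continuity on $B_{p,q}^{-s}$ is classical. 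One must be slightly careful that the estimate for $\eta\mapsto g\eta$ has to be of the form $\|g\eta\|_{B_{p,q}^{-s}}\lesssim\|g\|_{C^\sigma}\|\eta\|_{B_{p,q}^{-s}}$ with the $B_{p,q}^{-s}$-norm of $\eta$ entering linearly, so that the smallness of $\xi_n-\xi$ is carried through; a crude heat-kernel commutator bound involving the total variation $|\xi_n-\xi|$ would not suffice, so the paraproduct estimate is really needed at this point.
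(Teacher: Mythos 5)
Your proof is correct and follows essentially the same route as the paper: approximate $f$ by a smooth $g$, control the terms $\mathcal{M}(f_n-g,\xi_n)$ and $\mathcal{M}(g-f,\xi)$ by the bound of Theorem \ref{thm:func*dist}, and handle the remaining term by identifying $\mathcal{M}(g,\cdot)$ with the classical product $g\,\cdot$ and invoking the Besov multiplication estimate (\cite[Theorems 2.82 and 2.85]{BCD11}). The only cosmetic difference is that you establish joint sequential continuity in $(f,\xi)$ in one stroke, whereas the paper treats continuity in $f$ and in $\xi$ separately, using the same decomposition for the latter.
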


\begin{proof}
The continuity with respect to $f\in C(\Lambda)$ is obvious from Theorem \ref{thm:func*dist}.
Here we show the continuity with respect to $\xi\in B_{p,q}^{-s,+}$. Fix $f\in C(\Lambda)$ and let $\{\xi_N\}_{N\in\mathbb{N}}$ be an arbitrary sequence in $B_{p,q}^{-s,+}$ such that $\xi_N\to\xi$ in $B_{p,q}^{-s}$.
Since $C^\infty(\Lambda)$ is dense in $C(\Lambda)$, for any $\varepsilon>0$, there exists $g\in C^\infty(\Lambda)$ such that $\|f-g\|_{C(\Lambda)}<\varepsilon$.
As stated in \cite[Theorems 2.82 and 2.85]{BCD11}, the product map
$$
B_{\infty,\infty}^{s+1}\times B_{p,q}^{-s}\ni(g,\xi)\mapsto g\xi\in B_{p,q}^{-s}
$$
is continuous and coincides with $\mathcal{M}(g,\xi)$ if $(g,\xi)\in C^\infty(\Lambda)\times B_{p,q}^{-s,+}$, so we have
\begin{align*}
&\|\mathcal{M}(f,\xi_N)-\mathcal{M}(f,\xi)\|_{B_{p,q}^{-s}}\\
&\le\|\mathcal{M}(f-g,\xi_N)\|_{B_{p,q}^{-s}}+\|\mathcal{M}(f-g,\xi)\|_{B_{p,q}^{-s}}
+\|g(\xi^N-\xi)\|_{B_{p,q}^{-s}}\\
&\lesssim\varepsilon\|\xi_N\|_{B_{p,q}^{-s}}
+\varepsilon\|\xi\|_{B_{p,q}^{-s}}
+\|g\|_{B_{\infty,\infty}^{s+1}}\|\xi_N-\xi\|_{B_{p,q}^{-s}}.
\end{align*}
Letting $N\to\infty$,
\begin{align*}
\limsup_{N\to\infty}\|\mathcal{M}(f,\xi_N)-\mathcal{M}(f,\xi)\|_{B_{p,q}^{-s}}
\lesssim\varepsilon\|\xi\|_{B_{p,q}^{-s}}.
\end{align*}
Since $\varepsilon$ is arbitrary, we have
\begin{align*}
\lim_{N\to\infty}\|\mathcal{M}(f,\xi_N)-\mathcal{M}(f,\xi)\|_{B_{p,q}^{-s}}=0.
\end{align*}
Thus we have the continuity with respect to $\xi\in B_{p,q}^{-s,+}$
\end{proof}

%
%

The time-dependent version of Theorem \ref{thm:stableM} has an important role in this paper.

\begin{thm}\label{replaced keythm}
Let $s>0$, $p,q\in[1,\infty]$, and $r\in(1,\infty]$.
For any time-dependent $(Y,\mathcal{X}) \in L^1([0,T];C(\Lambda))\times L^r([0,T];B_{p,q}^{-s,+})$ and any function $f\in C_b^1(\mathbb{R})$, consider the time-dependent distribution
$$
\mathcal{M}(f(Y),\mathcal{X})(t)
:=\mathcal{M}(f(Y_t),\mathcal{X}_t).
$$
Then the correspondence $(Y,\mathcal{X})\mapsto\mathcal{M}(f(Y),\mathcal{X})$
is well-defined as a map
$$
L^1([0,T];C(\Lambda))\times L^r([0,T];B_{p,q}^{-s,+}) \to L^r([0,T];B_{p,q}^{-s}).
$$
Moreover, it is continuous as a map
$$
L^1([0,T];C(\Lambda))\times L^r([0,T];B_{p,q}^{-s,+}) \to L^{r'}([0,T];B_{p,q}^{-s})
$$
for any $r'\in[1,r)$.
\end{thm}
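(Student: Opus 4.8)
The plan is to upgrade the static statements of Theorems~\ref{thm:func*dist}, \ref{thm:stableM} to the time-dependent setting by combining pointwise-in-time bounds with standard $L^r$-space machinery. First I would check well-definedness and the norm bound. For $(Y,\mathcal{X})$ in the given product space, observe that for a.e.\ $t$ the pair $(f(Y_t),\mathcal{X}_t)$ lies in $C(\Lambda)\times B_{p,q}^{-s,+}$, so $\mathcal{M}(f(Y_t),\mathcal{X}_t)$ is defined by Theorem~\ref{thm:func*dist}, and
$$
\|\mathcal{M}(f(Y_t),\mathcal{X}_t)\|_{B_{p,q}^{-s}}\le C\|f(Y_t)\|_{C(\Lambda)}\|\mathcal{X}_t\|_{B_{p,q}^{-s}}\le C\|f\|_{C_b^1}\|\mathcal{X}_t\|_{B_{p,q}^{-s}}.
$$
Since $f$ is bounded, the right-hand side is in $L^r([0,T])$, giving $\mathcal{M}(f(Y),\mathcal{X})\in L^r([0,T];B_{p,q}^{-s})$ with $\|\mathcal{M}(f(Y),\mathcal{X})\|_{L^r_tB_{p,q}^{-s}}\lesssim\|f\|_{C_b^1}\|\mathcal{X}\|_{L^r_tB_{p,q}^{-s}}$. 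Strong measurability of $t\mapsto\mathcal{M}(f(Y_t),\mathcal{X}_t)$ can be obtained by approximating $\mathcal{X}$ by simple functions in $L^r([0,T];B_{p,q}^{-s})$ (so convergent a.e.\ along a subsequence) and using the pointwise continuity of $\mathcal{M}$ in its second argument from Theorem~\ref{thm:stableM}; since these simple-function images are clearly measurable, so is the limit.

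For the continuity statement, take $(Y^{(n)},\mathcal{X}^{(n)})\to(Y,\mathcal{X})$ in $L^1([0,T];C(\Lambda))\times L^r([0,T];B_{p,q}^{-s,+})$ and aim to show $\mathcal{M}(f(Y^{(n)}),\mathcal{X}^{(n)})\to\mathcal{M}(f(Y),\mathcal{X})$ in $L^{r'}([0,T];B_{p,q}^{-s})$ for $r'<r$. Passing to a subsequence, I may assume $Y^{(n)}_t\to Y_t$ in $C(\Lambda)$ and $\mathcal{X}^{(n)}_t\to\mathcal{X}_t$ in $B_{p,q}^{-s}$ for a.e.\ $t$; since $f\in C_b^1$, $f(Y^{(n)}_t)\to f(Y_t)$ in $C(\Lambda)$ for a.e.\ $t$. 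By Theorem~\ref{thm:stableM} the joint continuity of $\mathcal{M}$ gives $\mathcal{M}(f(Y^{(n)}_t),\mathcal{X}^{(n)}_t)\to\mathcal{M}(f(Y_t),\mathcal{X}_t)$ in $B_{p,q}^{-s}$ for a.e.\ $t$. Then I would invoke a Vitali-type convergence theorem: the $L^r$ bound $\|\mathcal{M}(f(Y^{(n)}_t),\mathcal{X}^{(n)}_t)\|_{B_{p,q}^{-s}}\le C\|f\|_{C_b^1}\|\mathcal{X}^{(n)}_t\|_{B_{p,q}^{-s}}$ with $\mathcal{X}^{(n)}\to\mathcal{X}$ in $L^r$ gives uniform integrability of the $r'$-th powers (for $r'<r$, an $L^r$-convergent sequence has uniformly integrable $r'$-th powers), so a.e.\ convergence plus uniform integrability yields $L^{r'}$-convergence. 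Since every subsequence has a further subsequence converging to the same limit $\mathcal{M}(f(Y),\mathcal{X})$ in $L^{r'}([0,T];B_{p,q}^{-s})$, the whole sequence converges.

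The main obstacle I anticipate is the loss of integrability exponent from $r$ to $r'$, and understanding why it is genuinely needed: the bound on $\mathcal{M}(f(Y^{(n)}),\mathcal{X}^{(n)})$ is only uniform in $L^r_t$, not equi-integrable in $L^r_t$ itself, so one cannot conclude $L^r$-continuity by dominated convergence — Vitali's theorem applied at the exponent $r'<r$ is exactly what circumvents this, exploiting that $\{\|\mathcal{X}^{(n)}_t\|_{B_{p,q}^{-s}}^{r'}\}_n$ is uniformly integrable because $\mathcal{X}^{(n)}\to\mathcal{X}$ in $L^r$ and $r'/r<1$. A secondary technical point is the strong measurability of the time-dependent distribution, which requires that $B_{p,q}^{-s}$-valued a.e.\ limits of simple functions be measurable; this is routine once the pointwise continuity of $\mathcal{M}$ is in hand. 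I do not expect the role of $f\in C_b^1$ to be essential beyond boundedness of $f$ and uniform continuity on the relevant range of $Y$ — the $C^1$ hypothesis is presumably there for later applications (differentiability of the solution map), so here I only use $\|f\|_\infty<\infty$ and continuity of $f$.
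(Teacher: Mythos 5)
Your proposal is correct and follows essentially the same route as the paper: the pointwise bound $\|\mathcal{M}(f(Y_t),\mathcal{X}_t)\|_{B_{p,q}^{-s}}\lesssim\|f\|_\infty\|\mathcal{X}_t\|_{B_{p,q}^{-s}}$ from Theorem \ref{thm:func*dist} for well-definedness, then a.e.-in-$t$ convergence via Theorem \ref{thm:stableM} combined with uniform integrability of the $r'$-th powers (Vitali/Lebesgue) to get $L^{r'}$-continuity, exactly as in the paper's proof. Your extra care with subsequences, measurability, and the observation that only boundedness and continuity of $f$ (the paper uses $\|f'\|_\infty$ for the $L^1$ estimate on $f(Y^N)-f(Y)$) are needed is sound and does not change the argument.
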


\begin{proof}
Since $f(Y)\in L^\infty([0,T];C(\Lambda))$, by Theorem \ref{thm:func*dist} the product $\mathcal{M}(f(Y),\mathcal{X})$ is well-defined and
\begin{align}\label{replaced keythm uniform integrable}
\|\mathcal{M}(f(Y),\mathcal{X})\|_{L^r([0,T];B_{p,q}^{-s})}
\lesssim\|f\|_{\infty}\|\mathcal{X}\|_{L^r([0,T];B_{p,q}^{-s})}.
\end{align}
Next we show the convergence. Let $\{(Y^N,\mathcal{X}^N)\}_{N\in\mathbb{N}}$ be an arbitrary sequence such that
\begin{itemize}
\item $\{Y^N\}_N$ is a sequence of measurable functions on $[0,T]\times\Lambda$ such that
\begin{align*}
Y^N\to Y
\quad\text{in $L^1([0,T];C(\Lambda))$},
\end{align*}
\item $\{\mathcal{X}^N\}_N$ is a sequence of $L^r([0,T];B_{p,q}^{-s,+})$ such that
\begin{align*}
\mathcal{X}^N\to \mathcal{X}
\quad\text{in $L^r([0,T];B_{p,q}^{-s})$}.
\end{align*}
\end{itemize}
Since $f'$ is bounded,
\begin{align*}
\|f(Y^N)-f(Y)\|_{L^1([0,T];C(\Lambda))}
\le\|f'\|_\infty\|Y^N-Y\|_{L^1([0,T];C(\Lambda))}\xrightarrow{N\to\infty}0.
\end{align*}
In particular,
\begin{align*}
\left\{
\begin{aligned}
f(Y_t^N)&\to f(Y_t)
\quad\text{in $C(\Lambda)$},\\
\mathcal{X}_t^N&\to\mathcal{X}_t
\quad\text{in $B_{p,q}^{-s,+}$},
\end{aligned}
\right.
\end{align*}
for almost every $t\in[0,T]$. Hence by Theorem \ref{thm:stableM} we have
\begin{align*}
\mathcal{M}(f(Y_t^N),\mathcal{X}_t^N)
\to \mathcal{M}(f(Y_t),\mathcal{X}_t)
\quad\text{in $B_{p,q}^{-s}$}
\end{align*}
for almost every $t\in[0,T]$.
Note that $\mathcal{M}(f(Y^N),\mathcal{X}^N)$ is bounded in $L^r([0,T];B_{p,q}^{-s})$ by the estimate \eqref{replaced keythm uniform integrable}.
Since $r>1$, for any $r'\in[1,r)$, the function $\|\mathcal{M}(f(Y_t^N),\mathcal{X}_t^N)\|_{B_{p,q}^{-s}}^{r'}$ is uniformly integrable. By Lebesgue's convergence theorem, we have
\begin{align*}
\int_0^T\|\mathcal{M}(f(Y_t^N),\mathcal{X}_t^N)-\mathcal{M}(f(Y_t),\mathcal{X}_t)\|_{B_{p,q}^{-s}}^{r'} dt
\xrightarrow{N\to\infty}0.
\end{align*}
\end{proof}

\subsection{Global well-posedness of $\Upsilon$}

We fix the parameters $\beta\in(0,1)$ and $T>0$.
In this section, we consider the initial value problem
\begin{align}
\label{eq:Ups}
\left\{
\begin{aligned}
\partial_t\Upsilon_t&=\frac12(\triangle-1)\Upsilon_t-\frac\alpha2 \mathcal{M}(e^{\alpha \Upsilon_t},\mathcal{X}_t),\\
\Upsilon_0&=\upsilon,
\end{aligned}
\right.
\end{align}
for any given $\mathcal{X}\in L^2([0,T];H_+^{-\beta})$ and $\upsilon\in H^{2-\beta}$.
We denote by $H_+^{-\beta}$ the subspace of all nonnegative elements in $H^{-\beta}$.
To solve the equation \eqref{eq:Ups}, we introduce the space
\begin{align*}
\mathscr{Y}_T&=\left\{\Upsilon\in L^2([0,T];C(\Lambda)\cap H^1)\cap C([0,T];L^2(\Lambda))\ ;
e^{\alpha\Upsilon}\in L^\infty([0,T];C(\Lambda))
\right\}.
\end{align*}
Our aim is to show the following theorem.

\begin{thm}\label{thm:solmapUps}
Let $\mathcal{X}\in L^2([0,T];H_+^{-\beta})$ and $\upsilon\in H^{2-\beta}$.
Then there exists a unique mild solution $\Upsilon\in\mathscr{Y}_T$ of \eqref{eq:Ups}, that is, the equation
\begin{align}\label{eq:mildUps}
\Upsilon_t=e^{\frac12(\triangle-1)t}\upsilon-\frac\alpha2\int_0^te^{\frac12(\triangle-1)(t-s)}\mathcal{M}(e^{\alpha\Upsilon_s},\mathcal{X}_s)ds
\end{align}
holds for any $t\in(0,T]$.
Moreover, this solution belongs to the space $L^2([0,T];H^{1+\delta})\cap C([0,T];H^\delta)$ for any $\delta\in(0,1-\beta)$, and the mapping
$$
\mathcal{S}:H^{2-\beta}\times L^2([0,T];H_+^{-\beta})\ni(\upsilon,\mathcal{X})
\mapsto \Upsilon\in L^2([0,T];H^{1+\delta})\cap C([0,T];H^\delta)
$$
is continuous.
\end{thm}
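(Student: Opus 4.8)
The plan is to solve \eqref{eq:Ups} by a two-stage argument: first a local-in-time fixed point in the space $\mathscr{Y}_T$, then a global extension via an a priori bound that prevents blow-up. For the local step, I would work on a short interval $[0,\tau]$ and set up the Picard iteration map
$$
\Gamma(\Upsilon)_t := e^{\frac12(\triangle-1)t}\upsilon - \frac\alpha2\int_0^t e^{\frac12(\triangle-1)(t-s)}\mathcal{M}(e^{\alpha\Upsilon_s},\mathcal{X}_s)\,ds.
$$
The key regularity input is that $\mathcal{M}(e^{\alpha\Upsilon_s},\mathcal{X}_s)$ lives in $H^{-\beta}$ with norm controlled by $\|e^{\alpha\Upsilon_s}\|_{C(\Lambda)}\|\mathcal{X}_s\|_{H^{-\beta}}$ (Theorem \ref{thm:func*dist} with $p=q=2$, $s=\beta$), and that $\mathcal{X}\in L^2([0,T];H^{-\beta})$. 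Then the heat-semigroup smoothing estimates from Proposition \ref{prop:heatsemigr} (referenced earlier) give, for $\delta\in(0,1-\beta)$, a gain of $1+\delta-(-\beta)=1+\beta+\delta<2$ derivatives at the price of a time-integrable singularity, so $\Gamma$ maps into $C([0,T];H^\delta)\cap L^2([0,T];H^{1+\delta})$, hence into $C([0,T];C(\Lambda))$ by Sobolev embedding in two dimensions (since $\delta>0$). The exponential Lipschitz bound $|e^{\alpha a}-e^{\alpha b}|\le |\alpha|e^{|\alpha|(\|a\|_\infty\vee\|b\|_\infty)}|a-b|$ together with the same smoothing estimate makes $\Gamma$ a contraction on a small ball of $C([0,\tau];C(\Lambda))$ for $\tau$ small depending on $\|\upsilon\|_{H^{2-\beta}}$ and $\|\mathcal{X}\|_{L^2}$; one checks the fixed point actually lies in $\mathscr{Y}_T$ by bootstrapping the regularity once more.

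For the global step, the essential observation is a one-sided sign: since $\mathcal{X}_t\ge0$ and $e^{\alpha\Upsilon_t}>0$, the drift $-\frac\alpha2\mathcal{M}(e^{\alpha\Upsilon_t},\mathcal{X}_t)$ is a nonpositive measure when $\alpha>0$ (and nonnegative when $\alpha<0$), so testing \eqref{eq:Ups} against a suitable positive/negative function controls $\Upsilon$ from one side for free. To get a genuine a priori bound in $L^2$ and $H^1$, I would use the energy identity: pairing the equation with $\Upsilon_t$ gives
$$
\frac12\frac{d}{dt}\|\Upsilon_t\|_{L^2}^2 + \frac12\|\Upsilon_t\|_{H^1}^2 = -\frac\alpha2\int_\Lambda \Upsilon_t(x)\,e^{\alpha\Upsilon_t(x)}\,\mu_{\mathcal{X}_t}(dx),
$$
and then exploit that $a\mapsto a e^{\alpha a}$ is bounded above (for $\alpha>0$, by $0$ on $a\ge0$ and by $1/(e|\alpha|)$ overall) so the right side is $\lesssim \|\mathcal{X}_t\|_{H^{-\beta}}$ — actually one needs to be slightly more careful since $\mu_{\mathcal X_t}$ is only a measure, so I would instead pair against $\Upsilon_t$ after one step of heat smoothing, or pair against $1$ and against $\Upsilon_t^-$ separately, using $\|\mathcal{X}_t\|_{H^{-\beta}}\in L^2_t$. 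This yields $\sup_{[0,T]}\|\Upsilon_t\|_{L^2}+\|\Upsilon\|_{L^2([0,T];H^1)}<\infty$ with a bound depending only on $\|\upsilon\|$, $\|\mathcal{X}\|_{L^2([0,T];H^{-\beta})}$, hence $\|e^{\alpha\Upsilon_t}\|_\infty$ stays finite and the local solution extends to all of $[0,T]$.

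The continuity of the solution map $\mathcal{S}$ I would deduce from Theorem \ref{replaced keythm}: if $(\upsilon^N,\mathcal{X}^N)\to(\upsilon,\mathcal{X})$ in $H^{2-\beta}\times L^2([0,T];H_+^{-\beta})$, then the a priori bounds give uniform control of $\Upsilon^N$ in $L^2([0,T];H^{1+\delta})\cap C([0,T];H^\delta)$, so along a subsequence $\Upsilon^N\to\widetilde\Upsilon$ in $L^1([0,T];C(\Lambda))$ (by compact embedding), whence $\mathcal{M}(e^{\alpha\Upsilon^N},\mathcal{X}^N)\to\mathcal{M}(e^{\widetilde\Upsilon},\mathcal{X})$ in $L^{r'}([0,T];H^{-\beta})$ for $r'<2$ by the time-dependent stability theorem; passing to the limit in the mild formulation identifies $\widetilde\Upsilon$ as the (unique) solution with data $(\upsilon,\mathcal{X})$, and uniqueness upgrades subsequential convergence to full convergence. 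The main obstacle I anticipate is the interplay between the exponential nonlinearity — which is only locally Lipschitz and grows superlinearly — and the roughness of $\mathcal{X}$: the a priori estimate must genuinely use the sign of the drift together with the boundedness of $a\mapsto ae^{\alpha a}$ from above, because no bound on $\|\Upsilon\|_\infty$ is available a priori and a crude Gronwall argument would fail. Getting the energy estimate to close despite $\mu_{\mathcal X_t}$ being merely a measure (not testable against the merely-$L^2$ function $\Upsilon_t$ without care) is the delicate point, and I would handle it by regularizing and passing to the limit, or by noting $\Upsilon_t\in C(\Lambda)$ for $t>0$ from the local theory so the pairing is legitimate on any subinterval bounded away from $0$.
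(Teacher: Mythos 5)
Your overall architecture (local fixed point, then global continuation via an energy bound, then continuity by compactness plus Theorem \ref{replaced keythm} and uniqueness) diverges from the paper, and two of its steps have genuine gaps. First, the local contraction in a small ball of $C([0,\tau];C(\Lambda))$ cannot be set up: with $\mathcal{X}$ only in $L^2([0,T];H^{-\beta})$, the Duhamel term $\int_0^te^{\frac12(\triangle-1)(t-s)}\mathcal{M}(e^{\alpha\Upsilon_s},\mathcal{X}_s)ds$ gains strictly less than one derivative uniformly in time (Proposition \ref{prop:Schauder} with $r=r'=2$ yields only $C([0,T];H^{1-\beta-\varepsilon})\cap L^2([0,T];H^{2-\beta-\varepsilon})$), and since $1-\beta-\varepsilon<1$ the space $H^{1-\beta-\varepsilon}$ does not embed into $C(\Lambda)$ in two dimensions. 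So the Picard map does not send $C([0,\tau];C(\Lambda))$ into itself for any $\beta>0$, and shrinking $\tau$ does not help because the obstruction is spatial regularity at fixed time, not the size of a constant; this is precisely why the paper avoids any fixed point (cf.\ Remark \ref{Garban}: the fixed-point route, as in Garban's work, forces a more restrictive range of the charge parameter) and instead constructs the solution directly by compactness: approximate $\mathcal{X}$ by nonnegative continuous $\mathcal{X}^N$, solve classically, use the Schauder estimate and Simon's compact embedding (Lemma \ref{Simon}), and pass to the limit in the mild equation via Theorem \ref{replaced keythm}. Second, your continuation step rests on the claim that the $L^2_x$/$H^1_x$ energy bound implies $\|e^{\alpha\Upsilon_t}\|_{\infty}$ stays finite; this is false in dimension two, where $H^1(\Lambda)\not\hookrightarrow L^\infty(\Lambda)$, so the energy estimate does not close the continuation argument.

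The fix is the observation you mention in passing but do not exploit: since $\mathcal{X}_t\ge0$, the heat kernel is positive and the drift has the sign of $-\alpha$, the mild form itself gives, for $\alpha>0$, $\Upsilon_t\le e^{\frac12(\triangle-1)t}\upsilon\le\|\upsilon\|_{C(\Lambda)}$ (and the symmetric lower bound for $\alpha<0$), hence $\|e^{\alpha\Upsilon}\|_{L^\infty([0,T];C(\Lambda))}\le e^{|\alpha|\|\upsilon\|_{C(\Lambda)}}$ globally in time and for free; only this one-sided bound on $\alpha\Upsilon$ is needed, because the nonlinearity sees $\Upsilon$ only through $e^{\alpha\Upsilon}$. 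This is exactly \eqref{eq:bdUpsN} in Lemma \ref{lem:existence}, and with it no energy estimate is needed for global existence at all. Your energy-identity idea is, however, the right tool for a different purpose: uniqueness in the full class $\mathscr{Y}_T$ (which a contraction in a smaller space would not give, since elements of $\mathscr{Y}_T$ are only $L^2$ in time with values in $C(\Lambda)\cap H^1$). The paper's Lemma \ref{lem:unique} runs precisely such an argument on the difference of two solutions, mollifying with $e^{\lambda\triangle}$ to justify the computation against the measure $\mu_{\mathcal{X}_s}$ and using the monotonicity of $x\mapsto e^{\alpha x}$ together with the nonnegativity of $\mu_{\mathcal{X}_s}$ so that the nonlinear term is nonpositive. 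Your final step, the continuity of $\mathcal{S}$ by uniform a priori bounds, compact embedding, the stability Theorem \ref{replaced keythm}, and identification of subsequential limits through uniqueness, does coincide with the paper's argument once the existence, uniqueness and a priori estimate are established as above.
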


We first show the uniqueness of the solution.

\begin{lem}\label{lem:unique}
For any $\mathcal{X}\in L^2([0,T];H_+^{-\beta})$ and $\upsilon\in H^{2-\beta}$, there is at most one mild solution $\Upsilon\in\mathscr{Y}_T$ of the equation \eqref{eq:Ups}.
\end{lem}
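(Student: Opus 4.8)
The plan is to prove uniqueness by a Gronwall-type argument applied to the difference of two solutions, exploiting crucially that the exponential nonlinearity is \emph{monotone} together with the nonnegativity of $\mathcal{X}$. Suppose $\Upsilon^{(1)},\Upsilon^{(2)}\in\mathscr{Y}_T$ are two mild solutions of \eqref{eq:Ups} with the same data $(\upsilon,\mathcal{X})$, and set $Z=\Upsilon^{(1)}-\Upsilon^{(2)}$. Then $Z_0=0$ and, at least formally, $Z$ solves
\begin{align*}
\partial_tZ_t=\tfrac12(\triangle-1)Z_t-\tfrac\alpha2\bigl(\mathcal{M}(e^{\alpha\Upsilon^{(1)}_t},\mathcal{X}_t)-\mathcal{M}(e^{\alpha\Upsilon^{(2)}_t},\mathcal{X}_t)\bigr).
\end{align*}
The key observation is that, by Theorem \ref{thm:LL}, for each $t$ the distribution $\mathcal{X}_t$ is a nonnegative measure $\mu_{\mathcal{X}_t}$, so
\begin{align*}
\mathcal{M}(e^{\alpha\Upsilon^{(1)}_t},\mathcal{X}_t)-\mathcal{M}(e^{\alpha\Upsilon^{(2)}_t},\mathcal{X}_t)=\bigl(e^{\alpha\Upsilon^{(1)}_t}-e^{\alpha\Upsilon^{(2)}_t}\bigr)\mu_{\mathcal{X}_t},
\end{align*}
and since $Z_t\in C(\Lambda)$ the pairing $\langle Z_t,(e^{\alpha\Upsilon^{(1)}_t}-e^{\alpha\Upsilon^{(2)}_t})\mu_{\mathcal{X}_t}\rangle$ makes sense and, because $u\mapsto e^{\alpha u}$ is monotone in the direction of the sign of $\alpha$ while $\mu_{\mathcal{X}_t}\ge0$, the quantity $\alpha(e^{\alpha\Upsilon^{(1)}_t}-e^{\alpha\Upsilon^{(2)}_t})(\Upsilon^{(1)}_t-\Upsilon^{(2)}_t)$ is pointwise nonnegative. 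Hence the contribution of the nonlinear term to $\frac{d}{dt}\|Z_t\|_{L^2}^2$ has a favorable sign.

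Concretely, I would test the equation for $Z$ against $Z_t$ in $L^2(\Lambda)$. Because $Z\in\mathscr{Y}_T\subset L^2([0,T];H^1)\cap C([0,T];L^2)$ and $\partial_tZ$ lies in a suitable negative-order space, a standard energy-identity justification (e.g. via the mild formulation and smoothing of the semigroup, or regularizing in space) gives
\begin{align*}
\tfrac12\|Z_t\|_{L^2}^2+\int_0^t\tfrac12\|Z_s\|_{H^1}^2\,ds=-\tfrac\alpha2\int_0^t\int_\Lambda\bigl(e^{\alpha\Upsilon^{(1)}_s}-e^{\alpha\Upsilon^{(2)}_s}\bigr)Z_s\,\mu_{\mathcal{X}_s}(dx)\,ds.
\end{align*}
The integrand on the right is $\frac\alpha2(e^{\alpha\Upsilon^{(1)}_s(x)}-e^{\alpha\Upsilon^{(2)}_s(x)})(\Upsilon^{(1)}_s(x)-\Upsilon^{(2)}_s(x))\ge0$ for every $x$ (mean value theorem: the difference of exponentials has the same sign as $\alpha$ times the difference of arguments), and $\mu_{\mathcal{X}_s}\ge0$, so the whole right-hand side is $\le0$. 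Therefore $\|Z_t\|_{L^2}^2\le0$ for all $t$, forcing $Z\equiv0$, i.e. $\Upsilon^{(1)}=\Upsilon^{(2)}$.

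The main obstacle is the rigorous justification of the energy identity: a priori $Z$ has only the regularity coming from membership in $\mathscr{Y}_T$, and $\mathcal{M}(e^{\alpha\Upsilon^{(i)}},\mathcal{X})$ lives in $L^2([0,T];H^{-\beta})$ by Theorem \ref{thm:func*dist}, which is rough, so pairing $\partial_tZ_t$ with $Z_t$ is not automatic. I would handle this by working with the mild form \eqref{eq:mildUps} for $Z$, using the parabolic smoothing of $e^{\frac12(\triangle-1)t}$ to see that $Z\in C((0,T];H^\delta)$ with $\delta>\beta$ so that the duality pairing $\langle\mathcal{M}(e^{\alpha\Upsilon^{(1)}_s},\mathcal{X}_s)-\mathcal{M}(e^{\alpha\Upsilon^{(2)}_s},\mathcal{X}_s),Z_s\rangle$ is well-defined and equals the measure integral above (since $Z_s\in C(\Lambda)$, the pairing against the measure and the Sobolev duality pairing agree), and then differentiating $\|Z_t\|_{L^2}^2$ — or, to avoid differentiating at $t=0$, integrating the mild identity against $Z_t$ and using Young's inequality on the linear semigroup term to absorb it. A slightly cleaner alternative avoiding all energy estimates: mollify by applying $P_M$, test against $P_M^2 Z_t$, use that $P_M$ preserves nonnegativity of the measure only approximately — this is messier, so I expect the duality-pairing route combined with the sign of the nonlinear term to be the cleanest, and that sign (monotonicity of $\exp$ plus positivity of the chaos measure) is really the heart of the argument.
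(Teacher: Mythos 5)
Your argument is essentially the paper's proof: the same $L^2$-energy estimate for the difference $Z$, with the nonlinear term discarded thanks to the monotonicity of $x\mapsto e^{\alpha x}$ combined with the nonnegativity of the measure $\mu_{\mathcal{X}_t}$, and the energy identity justified by spatial regularization (the paper takes $Z^\lambda=e^{\lambda\triangle}Z$, uses the Schauder estimate to get enough time regularity so the identity holds as a Young integral, and then lets $\lambda\to0$). One minor slip in your sketch: parabolic smoothing does not give $Z_t\in H^\delta$ with $\delta>\beta$ when $\beta\ge\tfrac12$, but this is not needed, since $Z\in L^2([0,T];H^1\cap C(\Lambda))$ from membership in $\mathscr{Y}_T$ already makes the pairing with $D\in L^2([0,T];H^{-\beta})$ well-defined for a.e.\ $t$ and identifies it with the integral against the measure.
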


\begin{proof}
Let $\Upsilon,\Upsilon'\in\mathscr{Y}_T$ be two solutions of \eqref{eq:Ups} with the same $\mathcal{X}$ and $\upsilon$. Then $Z=\Upsilon-\Upsilon'$ solves the equation
\begin{align*}
\left\{\partial_t-\frac12(\triangle-1)\right\}Z_t
&=-\frac\alpha2\mathcal{M}(e^{\alpha \Upsilon_t}-e^{\alpha \Upsilon_t'},\mathcal{X}_t)
=:D_t.
\end{align*}
Since $e^{\alpha\Upsilon},e^{\alpha\Upsilon'}\in L^\infty([0,T];C(\Lambda))$ and $\mathcal{X}\in L^2([0,T];H_+^{-\beta})$, we have that $D\in L^2([0,T];H^{-\beta})$ by Theorem \ref{thm:func*dist}.
Let $\lambda >0$ and define $Z^\lambda =e^{\lambda \triangle}Z$. Then $Z^\lambda$ solves the equation
\begin{align*}
\left\{\partial_t-\frac12(\triangle-1)\right\}Z_t^\lambda
&=e^{\lambda \triangle}D.
\end{align*}
By the regularizing effect (see Proposition \ref{prop:heatsemigr}), $e^{\lambda \triangle}D$ belongs to $L^2([0,T];C^\infty(\Lambda))$. Then by the Schauder estimate (see Proposition \ref{prop:Schauder}), we have that $Z^\lambda$ belongs to $C^{1-\kappa}([0,T];C^\infty(\Lambda))$ for any $\kappa>0$.
Hence we can justify the energy equation
\begin{align*}
&\int_\Lambda|Z_t^\lambda (x)|^2dx
=2\int_0^t\int_\Lambda Z_s^\lambda (x)\partial_sZ_s^\lambda (x)dxds\\
&=-\int_0^t\int_\Lambda|\nabla Z_s^\lambda (x)|^2dxds-\int_0^t\int_\Lambda|Z_s^\lambda (x)|^2dxds
+2\int_0^t\int_\Lambda Z_s^\lambda (x)e^{\lambda \triangle}D_s(x)dxds
\end{align*}
where the first equality is justified as a Young's integral. Letting $\lambda\to0$, we have
\begin{align*}
&\int_\Lambda|Z_t(x)|^2dx\\
&=-\int_0^t\int_\Lambda|\nabla Z_s(x)|^2dxds-\int_0^t\int_\Lambda|Z_s(x)|^2dxds
+2\int_0^t\int_\Lambda Z_s(x)D_s(x)dxds.
\end{align*}
For the last term,
\begin{align*}
2\int_\Lambda Z_s(x)D_s(x)dx
&=-\alpha\int_\Lambda(e^{\alpha\Upsilon_s(x)}-e^{\alpha\Upsilon_s'(x)})Z_s(x) \mu _{\mathcal{X}_s} (dx)\\
&=-\alpha^2\int_\Lambda e^{A(\alpha\Upsilon_s(x),\alpha\Upsilon_s(x))}|Z_s(x)|^2 \mu _{\mathcal{X}_s} (dx)\le0,
\end{align*}
where $\mu _{\mathcal{X}_s}$ is the measure appeared in Theorem \ref{thm:LL} and $A(x,y)$ is a continuous function on $\mathbb{R}^2$ defined by
$$
A(x,y)=
\begin{cases}
\log\frac{e^x-e^y}{x-y},&x\neq y,\\
x&x=y.
\end{cases}
$$
Hence we have $\|Z_t\|_{L^2(\Lambda)}=0$ for any $t\in(0,T]$, which implies $\Upsilon=\Upsilon'$ in $\mathscr{Y}_T$.
\end{proof}

Next we show the existence. The following embedding is frequently used below.

\begin{lem}[{\cite[Corollary~5]{JSimon}}]\label{Simon}
Let $A\subset B\subset C$ be Banach spaces such that the inclusion $A\hookrightarrow B$ is compact. Let $p,r\in[1,\infty]$ and $s> \max\{ 0, \frac1r-\frac1p\}$.
Then the embedding
$$
L^p([0,T];A)\cap W^{s,r}([0,T];C) \hookrightarrow L^p([0,T];B)
$$
is compact. When $p=\infty$ (resp. $r=\infty$), the norm $L^p([0,T];\cdot)$ (resp. $W^{s,r}([0,T];\cdot)$) is replaced by $C([0,T];\cdot)$ (resp. $C^s([0,T];\cdot)$). 
\end{lem}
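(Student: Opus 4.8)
The plan is to prove this as an instance of the Aubin--Lions--Simon compactness principle, reducing the target compactness in $L^p([0,T];B)$ to two simpler facts through an Ehrling-type interpolation inequality that encodes the compactness of $A\hookrightarrow B$. First I would record that, since $A\hookrightarrow B$ is compact (and $B\hookrightarrow C$ continuous), for every $\eta>0$ there is a constant $C_\eta$ with
\[
\|v\|_B\le\eta\|v\|_A+C_\eta\|v\|_C,\qquad v\in A.
\]
This is the standard contradiction argument: if it failed there would be $v_n$ with $\|v_n\|_B=1$, $\|v_n\|_A\le1/\eta$, and $\|v_n\|_C\to0$; compactness extracts a $B$-convergent subsequence whose limit has $B$-norm $1$ but $C$-norm $0$, contradicting $B\hookrightarrow C$. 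Integrating this pointwise-in-$t$ inequality and using the triangle inequality in $L^p([0,T];\cdot)$ gives
\[
\|u\|_{L^p([0,T];B)}\le\eta\|u\|_{L^p([0,T];A)}+C_\eta\|u\|_{L^p([0,T];C)}.
\]

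Given a bounded sequence $\{u_n\}$ in $L^p([0,T];A)\cap W^{s,r}([0,T];C)$ with bound $M$, the displayed inequality shows it suffices to extract a subsequence converging in $L^p([0,T];C)$: if $u_n\to u$ there, then $\|u_n-u_m\|_{L^p([0,T];B)}\le2\eta M+C_\eta\|u_n-u_m\|_{L^p([0,T];C)}$, and choosing $\eta$ small and then $n,m$ large makes the right side arbitrarily small, so $\{u_n\}$ is Cauchy, hence convergent, in $L^p([0,T];B)$. Thus the whole problem collapses to proving that a set bounded in $L^p([0,T];A)\cap W^{s,r}([0,T];C)$ is relatively compact in $L^p([0,T];C)$.

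This last step is the heart of the argument, and I would carry it out with the vector-valued Riesz--Fr\'echet--Kolmogorov criterion. Relative compactness in $L^p([0,T];C)$ follows from two conditions: (a) the local time-averages $\delta^{-1}\int_t^{t+\delta}u(\sigma)\,d\sigma$ lie in a fixed relatively compact subset of $C$, and (b) the time-translations are uniformly small, $\sup_n\|\tau_hu_n-u_n\|_{L^p([0,T-h];C)}\to0$ as $h\to0$. Condition (a) is immediate: by H\"older and the $L^p([0,T];A)$-bound these averages are bounded in $A$ uniformly in $t$ and $n$, and since $A\hookrightarrow C$ is compact (the compact $A\hookrightarrow B$ composed with $B\hookrightarrow C$) they form a relatively compact family in $C$. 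The content lies in condition (b), which is exactly where the fractional time-regularity and the sharp exponent condition $s>\max\{0,1/r-1/p\}$ enter.

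I expect condition (b) to be the main obstacle. The mechanism is a one-dimensional fractional Sobolev embedding in the time variable: when $s-(1/r-1/p)>0$ one has $W^{s,r}([0,T];C)\hookrightarrow W^{s',p}([0,T];C)$ with $s'=s-1/r+1/p>0$, and the translation estimate in $W^{s',p}$ then yields $\|\tau_hu-u\|_{L^p([0,T-h];C)}\lesssim h^{s'}[u]_{W^{s',p}}\lesssim h^{s'}\|u\|_{W^{s,r}}$, uniformly over the bounded family. The hypothesis $s>\max\{0,1/r-1/p\}$ is precisely what guarantees $s'>0$, the requirement $s>0$ being the binding one when $r\le p$. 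The endpoint cases must be handled separately: for $r=\infty$ the space $W^{s,\infty}=C^s$ gives $\|\tau_hu-u\|_C\le h^s[u]_{C^s}$ directly, while $p=\infty$ requires upgrading the conclusion to compactness in $C([0,T];B)$ by an Arzel\`a--Ascoli argument in time, using (b) as equicontinuity and (a) as pointwise relative compactness. Pinning down the translation estimate with the correct power of $h$ and verifying the fractional Sobolev embedding on the bounded interval is the delicate technical point; everything else is soft functional analysis.
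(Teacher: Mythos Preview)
The paper does not give its own proof of this lemma; it is stated as a citation of \cite[Corollary~5]{JSimon} and used as a black box. Your proposal follows the standard Aubin--Lions--Simon strategy (Ehrling interpolation, reduction to relative compactness in $L^p([0,T];C)$, then a Riesz--Fr\'echet--Kolmogorov/Arzel\`a--Ascoli argument via the fractional time-regularity), which is indeed the route taken in Simon's original paper, so there is nothing to compare against here.
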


\begin{lem}\label{lem:existence}
For any $\mathcal{X}\in L^2([0,T];H_+^{-\beta})$ and $\upsilon\in H^{2-\beta}$, there is at least one mild solution $\Upsilon\in\mathscr{Y}_T$. Moreover, for any $\delta\in(0,1-\beta)$, there exists a constant $C>0$ independent of $\mathcal{X}$ and $\upsilon$ such that one has the a priori estimate
\begin{align}\label{eq:aprioriUps}
\begin{aligned}
&\|\Upsilon\|_{L^2([0,T];H^{1+\delta})\cap C([0,T];H^\delta)\cap C^{\delta/2}([0,T];L^2)}\\
&\qquad\le C\left\{\|\upsilon\|_{H^{2-\beta}}+e^{|\alpha|\|\upsilon\|_{C(\Lambda)}}\|\mathcal{X}\|_{L^2([0,T];H^{-\beta})}\right\}.
\end{aligned}
\end{align}
\end{lem}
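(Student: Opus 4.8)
The plan is to construct the solution by a Galerkin-type approximation combined with the compactness embedding of Lemma \ref{Simon}, and then extract the a priori bound \eqref{eq:aprioriUps} from energy estimates that exploit the crucial \emph{sign} of the nonlinear term. Concretely, for each $N$ I would mollify the equation by replacing $\mathcal{X}$ by the smooth $\mathcal{X}^N := e^{\frac1N\triangle}\mathcal{X} \geq 0$ (or by the spectral truncation $P_N\mathcal X$, which is also nonnegative in the mollifier setting) and $\upsilon$ by a smooth approximation $\upsilon^N \to \upsilon$ in $H^{2-\beta}$. For the regularized problem the drift $-\tfrac\alpha2 \mathcal{M}(e^{\alpha\Upsilon^N},\mathcal{X}^N)$ is a genuine smooth function times a continuous function, so a standard fixed-point argument in $C([0,T_\ast];C(\Lambda))$ gives a unique local classical solution; the key point, and the heart of the argument, is that this solution is in fact \emph{global} and enjoys bounds uniform in $N$.

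The uniform bounds come from two energy identities. First, testing the equation against $\Upsilon^N_t$ gives, exactly as in the proof of Lemma \ref{lem:unique},
\begin{align*}
\tfrac12\|\Upsilon_t^N\|_{L^2}^2 + \int_0^t\|\nabla\Upsilon_s^N\|_{L^2}^2\,ds + \int_0^t\|\Upsilon_s^N\|_{L^2}^2\,ds
= \tfrac12\|\upsilon^N\|_{L^2}^2 - \tfrac\alpha2\int_0^t\!\!\int_\Lambda \Upsilon_s^N e^{\alpha\Upsilon_s^N}\,\mu_{\mathcal{X}_s^N}(dx)\,ds.
\end{align*}
The nonlinear term is controlled from below because $x\mapsto \alpha x e^{\alpha x}$ is bounded below on $\mathbb R$ (by $-e^{-1}$), so $-\tfrac\alpha2 x e^{\alpha x}\leq \tfrac1{2e}$, and $\mu_{\mathcal X^N_s}(\Lambda) = \langle \mathcal X^N_s, 1\rangle \lesssim \|\mathcal X^N_s\|_{H^{-\beta}}$; this yields an $L^\infty_t L^2_x \cap L^2_t H^1_x$ bound depending only on $\|\upsilon\|_{L^2}$ and $\|\mathcal X\|_{L^2_t H^{-\beta}}$. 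Second, I need the $L^\infty$ bound on $e^{\alpha\Upsilon}$ that defines the space $\mathscr{Y}_T$. For this I would use the mild formulation: writing $\Upsilon^N = e^{\frac12(\triangle-1)t}\upsilon^N - \tfrac\alpha2\int_0^t e^{\frac12(\triangle-1)(t-s)}\mathcal{M}(e^{\alpha\Upsilon^N_s},\mathcal{X}^N_s)\,ds$ and using that the heat semigroup is positivity-preserving together with $\mathcal X^N\geq 0$, the sign of $\alpha$ forces one-sided control: if $\alpha>0$ the integral term is $\leq 0$, so $\Upsilon^N_t \leq e^{\frac12(\triangle-1)t}\upsilon^N \leq \|\upsilon\|_{C(\Lambda)}$ pointwise (and symmetrically for $\alpha<0$), giving $e^{|\alpha|\Upsilon^N}\lesssim e^{|\alpha|\|\upsilon\|_{C(\Lambda)}}$ in the relevant direction; in the other direction one uses the $L^2_tH^1_x$ bound plus a bootstrap. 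Once $e^{\alpha\Upsilon^N}$ is bounded in $L^\infty_tC(\Lambda)$ uniformly in $N$, Theorem \ref{thm:func*dist} gives $\mathcal{M}(e^{\alpha\Upsilon^N},\mathcal{X}^N)$ bounded in $L^2([0,T];H^{-\beta})$ by $e^{|\alpha|\|\upsilon\|_{C}}\|\mathcal X\|_{L^2_tH^{-\beta}}$, and then the Schauder/regularizing estimates (Propositions \ref{prop:heatsemigr}, \ref{prop:Schauder}) applied to the mild form upgrade this to the claimed bound in $L^2([0,T];H^{1+\delta})\cap C([0,T];H^\delta)\cap C^{\delta/2}([0,T];L^2)$, for $\delta\in(0,1-\beta)$, using $\upsilon\in H^{2-\beta}$ for the initial-data contribution.

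With these uniform bounds in hand, I would pass to the limit: by Lemma \ref{Simon} with $A=H^{1+\delta}$, $B=H^1\cap C(\Lambda)$ (or $B = C(\Lambda)$), $C=L^2$, and $W^{\delta/2,2}$ in time, the sequence $\{\Upsilon^N\}$ is precompact in $L^2([0,T];C(\Lambda)\cap H^1)$, so along a subsequence $\Upsilon^N\to\Upsilon$ there and a.e. in $(t,x)$; combined with $\mathcal X^N\to\mathcal X$ in $L^2_tH^{-\beta}_+$ and $e^{\alpha\Upsilon^N}\to e^{\alpha\Upsilon}$ in $L^1_tC(\Lambda)$ (dominated convergence, using the $L^\infty$ bound), Theorem \ref{replaced keythm} gives $\mathcal{M}(e^{\alpha\Upsilon^N},\mathcal X^N)\to\mathcal{M}(e^{\alpha\Upsilon},\mathcal X)$ in $L^{r'}_tH^{-\beta}$ for $r'<2$, which is enough to identify the limit of the mild formula and conclude $\Upsilon\in\mathscr{Y}_T$ solves \eqref{eq:mildUps}. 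The a priori bound \eqref{eq:aprioriUps} survives the limit by lower semicontinuity of the norms. The main obstacle I anticipate is the two-sided $L^\infty$ control of $\Upsilon$: the sign trick only gives a one-sided pointwise bound, and in the opposite direction one must rule out blow-up, which requires combining the energy $L^2_tH^1_x$ estimate with a Moser/De Giorgi-type or Gagliardo–Nirenberg iteration (or a more careful use of the Schauder estimate and the fact that $e^{\alpha\Upsilon}$ appears linearly in the drift) — this is where the exponential growth of the nonlinearity could in principle cause trouble, and where the argument must be done with some care rather than invoked from general theory.
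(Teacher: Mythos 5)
Your construction follows essentially the same route as the paper's proof: approximate $\mathcal{X}$ by nonnegative continuous functions (time mollification composed with $e^{\frac1N\triangle}$), solve the regularized equation classically, use positivity of the heat semigroup in the mild form to get the sign-dependent one-sided bound, control the drift by Theorem \ref{thm:func*dist}, apply the Schauder estimate of Proposition \ref{prop:Schauder}, extract a convergent subsequence via Lemma \ref{Simon}, and identify the limit as a mild solution through Theorem \ref{replaced keythm} (the paper handles the exponential there by replacing $e^x$ with an $f\in C^1_b$ equal to $e^x$ on $(-\infty,a]$, legitimate precisely because of the uniform \emph{upper} bound on $\alpha\Upsilon^N$; your dominated-convergence variant serves the same purpose). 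The a priori bound and the exponential bound then pass to the limit by lower semicontinuity and Fatou, as in the paper.

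The one substantive correction concerns what you call the main obstacle: a two-sided $L^\infty$ control of $\Upsilon$ is never needed, and the paper does not prove one. The only sup-norm quantity entering the argument is $e^{\alpha\Upsilon^N}$ (it appears in the drift, in the definition of $\mathscr{Y}_T$, and in the constant of \eqref{eq:aprioriUps}), and for this the one-sided comparison $\alpha\Upsilon^N_t\le|\alpha|\,\|\upsilon\|_{C(\Lambda)}$ already gives $\|e^{\alpha\Upsilon^N}\|_{C([0,T];C(\Lambda))}\le e^{|\alpha|\|\upsilon\|_{C(\Lambda)}}$; the ``other direction'' is the trivial bound $e^{\alpha\Upsilon^N}\ge0$. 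All norms of $\Upsilon^N$ itself in \eqref{eq:aprioriUps} are $L^2$-based Sobolev norms, and they follow directly from the Schauder estimate applied to the mild form once the drift is bounded in $L^2([0,T];H^{-\beta})$ by $e^{|\alpha|\|\upsilon\|_{C(\Lambda)}}\|\mathcal{X}\|_{L^2([0,T];H^{-\beta})}$; membership in $\mathscr{Y}_T$ then comes from $L^2([0,T];H^{1+\delta})\hookrightarrow L^2([0,T];C(\Lambda)\cap H^1)$. So no Moser/De Giorgi iteration, Gagliardo--Nirenberg bootstrap, or pointwise lower bound on $\Upsilon$ is required (for fixed $N$, global existence of the classical solution also only needs the one-sided bound plus the boundedness of $\mathcal{X}^N$). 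Likewise, your $L^2$ energy identity using $\alpha xe^{\alpha x}\ge-e^{-1}$ is correct but superfluous: the Schauder estimate supersedes it.
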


\begin{proof}
Let $\{\mathcal{X}^N\}_{N\in\mathbb{N}}$ be a family of nonnegative continuous functions on $[0,T]\times\Lambda$ converging to $\mathcal{X}$ in $L^2([0,T];H^{-\beta})$. Such an approximation exists. Indeed, if $\eta$ is a nonnegative continuous function on $\mathbb{R}$ supported in $[-1,1]$ and such that $\int _{-\infty}^\infty \eta (s) ds=1$, then the nonnegative continuous function
$$
\mathcal{X}_t^N(x):=N\int_0^T\eta(N(t-s))(e^{\frac1N\triangle}\mathcal{X}_s)(x)ds
$$
converges to $\mathcal{X}$ in $L^2([0,T];H^{-\beta})$ as $N\to\infty$.
Now we consider the classical global solutions of the approximating equations
\begin{align}
\label{eq:UpsN}
\left\{
\begin{aligned}
\partial_t\Upsilon_t^N&=\frac12(\triangle-1)\Upsilon_t^N-\frac\alpha2 e^{\alpha \Upsilon_t^N}\mathcal{X}_t^N,\\
\Upsilon_0^N&=\upsilon.
\end{aligned}
\right.
\end{align}
Note that $\upsilon\in H^{2-\beta}\subset C(\Lambda)$ by the Sobolev embedding.
By using the mild form, if $\alpha>0$, we have
\begin{align*}
\Upsilon_t^N&=e^{\frac12(\triangle-1)t}\upsilon-\frac\alpha2\int_0^te^{\frac12(\triangle-1)(t-s)}e^{\alpha \Upsilon_s^N}\mathcal{X}_s^Nds\\
&\le e^{\frac12(\triangle-1)t}\upsilon\le\|\upsilon\|_{C(\Lambda)},
\end{align*}
and if $\alpha<0$, we have
\begin{align*}
\Upsilon_t^N
&\ge e^{\frac12(\triangle-1)t}\upsilon\ge-\|\upsilon\|_{C(\Lambda)}.
\end{align*}
These yield 
\begin{align}\label{eq:bdUpsN}
\|e^{\alpha \Upsilon^N}\|_{C([0,T];C(\Lambda))}\le e^{|\alpha|\|\upsilon\|_{C(\Lambda)}}.
\end{align}

Let $\delta<\delta'<1-\beta$. Applying the Schauder estimate (Proposition \ref{prop:Schauder}) to $\Upsilon^N$,
\begin{align*}
&\|\Upsilon^N\|_{L^2([0,T];H^{1+\delta'})\cap C([0,T];H^{\delta'})\cap C^{\delta'/2}([0,T];L^2)}\\
&\lesssim \left(\|\upsilon\|_{H^{2-\beta}}+\|\mathcal{M}(e^{\alpha \Upsilon^N},\mathcal{X}^N)\|_{L^2([0,T];H^{-\beta})}\right)\\
&\lesssim \left(\|\upsilon\|_{H^{2-\beta}}+\|e^{\alpha \Upsilon^N}\|_{L^\infty(0,T;C(\Lambda))}\|\mathcal{X}^N\|_{L^2([0,T];H^{-\beta})}\right)\\
&\lesssim \left(\|\upsilon\|_{H^{2-\beta}}+e^{|\alpha|\|\upsilon\|_{C(\Lambda)}}\|\mathcal{X}^N\|_{L^2([0,T];H^{-\beta})}\right).
\end{align*}
By Lemma \ref{Simon}, the embeddings
\begin{align*}
L^2([0,T];H^{1+\delta'})\cap C^{\delta'/2}([0,T];L^2)&\hookrightarrow
L^2([0,T];H^{1+\delta}),\\
C([0,T];H^{\delta'})\cap C^{\delta'/2}([0,T];L^2)&\hookrightarrow
C([0,T];H^{\delta})
\end{align*}
are compact. Hence there exists a subsequence $\{N_k\}$ such that
\begin{align}\label{conv:Upsexists}
\Upsilon^{N_k}\to \Upsilon\quad\text{in $L^2([0,T];H^{1+\delta})\cap C([0,T];H^{\delta})$}.
\end{align}
In particular, we have \eqref{eq:aprioriUps} for $\Upsilon$ and $\|e^{\alpha\Upsilon}\|_{L^\infty([0,T];C(\Lambda))}\le e^{|\alpha|\|\upsilon\|_{C(\Lambda)}}$ by \eqref{eq:bdUpsN} and Fatou's lemma.

We show that $\Upsilon$ solves the mild equation \eqref{eq:mildUps}.
Since $(\Upsilon^{N_k},\mathcal{X}^{N_k})$ satisfies \eqref{eq:mildUps}, it is sufficient to show
\begin{align}\label{eq:convMUps}
\mathcal{M}(e^{\alpha \Upsilon^{N_k}},\mathcal{X}^{N_k})\to
\mathcal{M}(e^{\alpha \Upsilon},\mathcal{X})
\quad\text{in $L^{2-\kappa}([0,T];H^{-\beta})$}
\end{align}
for some $\kappa>0$.
Then letting $N_k\to\infty$ on both sides of \eqref{eq:mildUps} and applying Proposition \ref{prop:Schauder}, we have the same equality for $(\Upsilon,\mathcal{X})$ in the space $C([0,T];H^\delta)$.
Now \eqref{eq:convMUps} is an immediate consequence of Theorem \ref{replaced keythm}.
Indeed, by \eqref{conv:Upsexists} and the embedding $H^{1+\delta}\subset C(\Lambda)$,
\begin{align*}
\alpha\Upsilon^{N_k}&\to \alpha\Upsilon
\quad\text{in $L^2([0,T];C(\Lambda))$}.
\end{align*}
Moreover, since $\alpha\Upsilon^{N_k}$ is uniformly bounded from above (see (\ref{eq:bdUpsN})), we can apply Theorem \ref{replaced keythm} to a function $f\in C_b^1(\mathbb{R})$ such that $f(x)=e^x$ on a subset $x\in(-\infty,a]$ for some fixed $a\in\mathbb{R}$.

Thus we have the existence of the mild solution. Applying Proposition \ref{prop:Schauder}, the unique solution $\Upsilon$ also satisfies the a priori estimate \eqref{eq:aprioriUps}.

\end{proof}

We obtained that the solution map $\mathcal{S}:(\upsilon,\mathcal{X})\mapsto\Upsilon$ is well-defined. Finally we show the stability of the map $\mathcal{S}$.

\begin{proof}[{\bfseries Proof of Theorem \ref{thm:solmapUps}}]
Fix $(\upsilon,\mathcal{X}) \in H^{2-\beta}\times L^2([0,T];H_+^{-\beta})$ and let $\{(\upsilon^N,\mathcal{X}^N)\}_{N\in\mathbb{N}}\subset H^{2-\beta}\times L^2([0,T];H_+^{-\beta})$ be an arbitrary sequence which converges to $(\upsilon,\mathcal{X})$.
By using the a priori estimate \eqref{eq:aprioriUps} for $\Upsilon^N=\mathcal{S}(\upsilon^N,\mathcal{X}^N)$ and for $\delta''\in(\delta,1-\beta)$, we have
$$
\sup_N\|\Upsilon ^N\|_{L^2([0,T];H^{1+\delta''})\cap C([0,T];H^{\delta''})\cap C^{\delta''/2}([0,T];L^2)}<\infty.
$$
Thus we are in just a similar situation to the proof of Lemma \ref{lem:existence}.
By the compactness argument and by Theorem \ref{replaced keythm}, there exists a subsequence $\{\Upsilon^{N_k}\}$ which converges in the space $L^2([0,T];H^{1+\delta})\cap C([0,T];H^{\delta})$ and its limit coincides with $\Upsilon=\mathcal{S}(\upsilon,\mathcal{X})$.
This yields that any subsequence $\{\Upsilon^{N_k}\}$ has a subsequence $\{\Upsilon^{N_{k_\ell}}\}$ which converges to the common $\Upsilon$.
Thus we can conclude that
$$
\Upsilon^N\to\Upsilon
\quad\text{in $L^2([0,T];H^{1+\delta})\cap C([0,T];H^{\delta})$},
$$
which means the continuity of the map $\mathcal{S}$.
\end{proof}

\subsection{Proof of Theorem \ref{mainthm1}}

Now the first main result immediately follows.

\begin{proof}[{\bfseries Proof of Theorem \ref{mainthm1}}]
By the Da Prato-Debussche decomposition \eqref{eq:X}-\eqref{eq:Y}, the solution $\Phi^N(\phi)$ of the equation \eqref{expsqe1} has the form
$$
\Phi^N(\phi)=P_NX(\phi)+\mathcal{S}(0,\mathcal{X}^{(\exp,N)}(\phi)).
$$
For $\mu_0$-a.e. $\phi$, the first term in the right-hand side converges almost surely to $X(\phi)$ in $C([0,T];H^{-\varepsilon})$ by Proposition \ref{prop:aprioriOU}, and the second term converges almost surely to $\mathcal{S}(0,\mathcal{X}^{(\exp,\infty)}(\phi))$ in $C([0,T];H^\delta)$ by Theorem \ref{thm:expOU} and Theorem \ref{thm:solmapUps}. Hence $\Phi^N(\phi)$ converges to
$$
\Phi(\phi)=X(\phi)+\mathcal{S}(0,\mathcal{X}^{(\exp,\infty)}(\phi))
$$
in the space $C([0,T];H^{-\varepsilon})$ almost surely, for $\mu_0$-a.e. $\phi$.
\end{proof}

\section{Stationary solution}\label{sec:stationary}

In this section, we consider the SPDE \eqref{expsqe2}.
We first note that the generator of ${\bf\Phi}^N$ on ${\mathfrak F}C^{\infty}_{b}$ is given by
\begin{align*}
\mathcal{L}_{N}F(\phi)
&=\frac12\sum_{i, j=1}^n \partial_i \partial_jf(\langle \phi,{l_1}\rangle,\dots,\langle \phi,{l_n}\rangle) 
\langle l_{i}, l_{j} \rangle
\\
&\quad-\frac12\sum_{j=1}^n \partial_jf(\langle \phi,{l_1}\rangle,\dots,\langle \phi,{l_n}\rangle)
\cdot \big \{
{\big \langle} (1-\triangle)\phi, l_{j}
{\big \rangle}
+\alpha
{\big \langle}
P_{N}
\exp^\diamond_{N}(\alpha \phi), {l_j} {\big \rangle}
\big \},
\end{align*}
where $F(\phi)=f(\langle \phi,{l_1}\rangle,\dots,\langle \phi,{l_n}\rangle)$
with $f\in C^{\infty}_{b}(\mathbb R^{n}), l_{1}, \ldots l_{n} \in {\rm Span}\{e_{k};
k\in {\mathbb Z}^{2} \}$.
Applying the integration by parts formula for $\mu_{N}^{(\alpha)}$, we have
\begin{equation}
\int_{\mathcal{D}'(\Lambda)}\mathcal{L}_NF(\phi)G(\phi)\mu_N^{(\alpha)}(d\phi)
=\frac{1}{2} \int_{\mathcal{D}'(\Lambda)}
 (D_{H}F(\phi), D_{H}G(\phi))_{H} \mu_{N}^{(\alpha)}(d\phi)
\nonumber
\end{equation}
for $F, G\in {\mathfrak F}C^{\infty}_{b}$.
Hence by putting $G=1$ and applying Echeverr\'ia's criterion \cite{Ech82},  
we obtain that $\mu_N^{(\alpha)}$ is an invariant measure of the process ${\bf\Phi}^N$.
Therefore, if $\xi_N$ be a random variable with the law $\mu_N^{(\alpha)}$ and independent of $W$, then 
$\bar{\bf\Phi}^N={\bf\Phi}(\xi_N)$ is a stationary process.
In this section, we show the convergence of $\{\bar{\bf\Phi}^N\}$ in law.

\subsection{Tightness of stationary solutions}

We show the tightness of $\{\bar{\bf\Phi}^N\}$.
By the definition \eqref{eq:OU} of the OU process $X$, we can decompose $\bar{\bf\Phi}^N=X(\xi_N)+{\bf Y}^N$, where ${\bf Y}^N$ solves
\begin{align}
\label{eq:statY}
&\left\{
\begin{aligned}
\partial _t {\bf Y}_t^N&= \frac 12 (\triangle-1) {\bf Y}_t^N - \frac\alpha2 P_N
\left\{\exp(\alpha P_N{\bf Y}_t^N)
\exp\left(\alpha P_NX_t(\xi_N)-\frac{\alpha^2}2C_N\right)\right\},\\
{\bf Y}_0^N&=0.
\end{aligned}
\right.
\end{align}
For $X(\xi_N)$, by the a priori estimate of the OU process
 (Proposition \ref{prop:aprioriOU}) and the uniform bound
$$
\sup_{N\in\mathbb{N}}\mathbb{E} \big [ \|\xi_N\|_{H^{-\varepsilon}} \big ]
=\sup_{N\in\mathbb{N}}\int_{\mathcal{D}'(\Lambda)}\|\phi\|_{H^{-\varepsilon}}\mu_N^{(\alpha)}(d\phi)<\infty,
$$
it is easy to check that
\begin{align}\label{ineq:tightX}
\sup_{N\in\mathbb{N}}\mathbb{E}\left[ \left\| X_0(\xi_N) \right\| _{H^{-\varepsilon}} \right]
+\sup_{N\in\mathbb{N}} \mathbb{E}\left[ \sup _{s,t \in [0,T]} \frac{\left\| X_t(\xi_N)  - X_s(\xi_N) \right\| _{H^{-\varepsilon}}}{|t-s|^\lambda} \right] \leq C.
\end{align}
for sufficiently small $\lambda ,\varepsilon>0$.
Next we show the uniform bound of ${\bf Y}^N$.

\begin{prop}\label{prop:tightY}
For $\lambda \in (0, (1-\beta )/2)$, we have
$$
\sup_{N\in\mathbb{N}} \mathbb{E}\left[ \sup _{s,t \in [0,T]} \frac{\left\| {\bf Y}_t^N  - {\bf Y}_s^N \right\| _{L^2}}{|t-s|^\lambda } \right] \leq C.
$$
\end{prop}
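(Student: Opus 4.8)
The plan is to estimate $\mathbf{Y}^N$ directly from its mild formulation, using the regularizing effect of the heat semigroup together with the pointwise upper bound on the nonlinear term that is already available. From \eqref{eq:statY} we have
$$
\mathbf{Y}_t^N=-\frac\alpha2\int_0^t e^{\frac12(\triangle-1)(t-s)}P_N\,\mathcal{M}\!\left(e^{\alpha P_N\mathbf{Y}_s^N},\ \mathcal{X}_s^{(\exp,N)}(\xi_N)\right)ds,
$$
where $\mathcal{X}_s^{(\exp,N)}(\xi_N)=\exp(\alpha P_NX_s(\xi_N)-\tfrac{\alpha^2}2C_N)$ is nonnegative. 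The first step is to obtain, just as in the proof of Lemma \ref{lem:existence} via the maximum-principle argument on the mild form, a pathwise bound $\|e^{\alpha P_N\mathbf{Y}^N}\|_{C([0,T];C(\Lambda))}\le 1$ (the initial datum is $0$, so $\|v\|_{C(\Lambda)}=0$ and the exponential factor is bounded by $e^{0}=1$; here the sign of $\alpha$ is handled exactly as in the two cases treated there). Consequently, by Theorem \ref{thm:func*dist} and the boundedness of $P_N$ on $H^{-\beta}$,
$$
\left\|P_N\,\mathcal{M}\!\left(e^{\alpha P_N\mathbf{Y}_s^N},\mathcal{X}_s^{(\exp,N)}(\xi_N)\right)\right\|_{H^{-\beta}}
\lesssim \left\|\mathcal{X}_s^{(\exp,N)}(\xi_N)\right\|_{H^{-\beta}},
$$
so the driving term lies in $L^2([0,T];H^{-\beta})$ uniformly in $N$ once we control the right-hand side in expectation.

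The second step is the stochastic bound on the driving term. Since $\xi_N$ has law $\mu_N^{(\alpha)}$ and these laws have uniformly bounded Radon--Nikodym derivatives with respect to $\mu_0$ (Corollary \ref{cor:expmeas}), and since $\mu_0$ is invariant for the OU process, we get
$$
\mathbb{E}\!\left[\int_0^T\left\|\mathcal{X}_s^{(\exp,N)}(\xi_N)\right\|_{H^{-\beta}}^2 ds\right]
\lesssim \mathbb{E}\!\left[\int_{\mathcal{D}'(\Lambda)}\mu_0(d\phi)\int_0^T\left\|\mathcal{X}_s^{(\exp,N)}(\phi)\right\|_{H^{-\beta}}^2 ds\right]
= T\int_{\mathcal{D}'(\Lambda)}\left\|\exp_N^\diamond(\alpha\phi)\right\|_{H^{-\beta}}^2\mu_0(d\phi),
$$
which is bounded uniformly in $N$ by Theorem \ref{thm:expgff} (the partial sums converge in $L^2(\mu_0;H^{-\beta})$, hence are bounded). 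Call this deterministic $L^2([0,T];H^{-\beta})$-norm bound $\mathcal{R}_N$, so $\mathbb{E}[\mathcal{R}_N^2]\le C$.

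The third step is to convert the $H^{-\beta}$-bound on the driving term into the claimed Hölder bound for $\mathbf{Y}^N$ in $L^2(\Lambda)$. For $0\le s<t\le T$ write
$$
\mathbf{Y}_t^N-\mathbf{Y}_s^N=\left(e^{\frac12(\triangle-1)(t-s)}-1\right)\mathbf{Y}_s^N-\frac\alpha2\int_s^t e^{\frac12(\triangle-1)(t-r)}P_N\,\mathcal{M}\!\left(e^{\alpha P_N\mathbf{Y}_r^N},\mathcal{X}_r^{(\exp,N)}(\xi_N)\right)dr.
$$
For the second term, using $\|e^{\frac12(\triangle-1)(t-r)}g\|_{L^2}\lesssim (t-r)^{-\beta/2}\|g\|_{H^{-\beta}}$ together with Hölder's inequality in $r$ (with exponent $2$, against the square-integrable bound on the driving term), one gets a bound of the form $(t-s)^{(1-\beta)/2}\,\mathcal{R}_N$ — this is where the restriction $\lambda<(1-\beta)/2$ enters. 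For the first term, one uses that $\mathbf{Y}^N$ already lies in $L^2([0,T];H^{1+\delta})$ with a norm bounded by $\mathcal{R}_N$ (from the Schauder estimate, Proposition \ref{prop:Schauder}, applied to \eqref{eq:statY} exactly as in Lemma \ref{lem:existence}), so $\|(e^{\frac12(\triangle-1)(t-s)}-1)\mathbf{Y}_s^N\|_{L^2}\lesssim (t-s)^{(1+\delta)/2}\|\mathbf{Y}_s^N\|_{H^{1+\delta}}$, and again one absorbs this against the $L^2$-in-time bound. Combining, $\sup_{s\ne t}|t-s|^{-\lambda}\|\mathbf{Y}_t^N-\mathbf{Y}_s^N\|_{L^2}\lesssim \mathcal{R}_N$ pathwise, and taking expectations and using $\mathbb{E}[\mathcal{R}_N^2]\le C$ (hence $\mathbb{E}[\mathcal{R}_N]\le C$) gives the claim.

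The main obstacle I expect is the first step: making rigorous the comparison/maximum-principle argument giving $\|e^{\alpha P_N\mathbf{Y}^N}\|_{C([0,T];C(\Lambda))}\le 1$ in the presence of the extra projector $P_N$ acting on the \emph{outside} of the nonlinearity in \eqref{eq:statY} (unlike in \eqref{eq:Y}). Since $P_N$ need not be positivity-preserving when built from $\psi$ as a Fourier multiplier, the naive pointwise inequality $\mathbf{Y}_t^N\le e^{\frac12(\triangle-1)t}\cdot 0=0$ used in Lemma \ref{lem:existence} is not immediately available here; one must instead exploit that in the mollifier setting of Theorem \ref{mainthm2} the operator $P_N$ \emph{is} nonnegative, so $P_N$ of a nonnegative measure is a nonnegative function, and then the same mild-form comparison as in Lemma \ref{lem:existence} goes through (treating $\alpha>0$ and $\alpha<0$ separately). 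Everything else is a routine combination of the Schauder estimate, Theorem \ref{thm:func*dist}, the uniform $L^2(\mu_0)$-bound on the Wick exponential, and the uniform boundedness of $d\mu_N^{(\alpha)}/d\mu_0$.
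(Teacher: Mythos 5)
Your proposal follows the same route as the paper: the three essential ingredients are identical, namely (i) the uniform bound $\|e^{\alpha P_N\mathbf{Y}^N}\|_{C([0,T];C(\Lambda))}\le 1$ obtained by the comparison argument of Lemma \ref{lem:existence}, which, as you correctly flag, survives the outer $P_N$ precisely because in the setting of Theorem \ref{mainthm2} the operator $P_N$ is a nonnegative (mollifier) operator; (ii) Theorem \ref{thm:func*dist} to bound the forcing term in $L^2([0,T];H^{-\beta})$ by $\|\mathcal{X}^{(\exp,N)}(\xi_N)\|_{L^2([0,T];H^{-\beta})}$; and (iii) the uniform bound on $d\mu_N^{(\alpha)}/d\mu_0$ together with the invariance of $\mu_0$ under the OU dynamics to control this quantity in expectation, uniformly in $N$. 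The only difference is that the paper then simply cites the Schauder estimate (Proposition \ref{prop:Schauder}, with $r=p=q=2$, $\theta=-\beta$), which directly yields $\|\mathbf{Y}^N\|_{C^{\lambda}([0,T];L^2)}\lesssim\|U\|_{L^2([0,T];H^{-\beta})}$ for $\lambda<(1-\beta)/2$, whereas you re-derive this parabolic estimate by hand.

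In that hand re-derivation there is one slip. For the term $\bigl(e^{\frac12(\triangle-1)(t-s)}-1\bigr)\mathbf{Y}_s^N$ you use the bound $(t-s)^{(1+\delta)/2}\|\mathbf{Y}_s^N\|_{H^{1+\delta}}$ and propose to ``absorb this against the $L^2$-in-time bound''; but the H\"older seminorm requires a bound uniform in $s$, and $\|\mathbf{Y}_s^N\|_{H^{1+\delta}}$ is only square-integrable in $s$, not bounded, so this step does not close as written. The repair is immediate and is exactly what the appendix proof of Proposition \ref{prop:Schauder} does: choose $\delta''\in(2\lambda,1-\beta)$ (possible since $\lambda<(1-\beta)/2$) and use instead
\begin{equation*}
\bigl\|\bigl(e^{\frac12(\triangle-1)(t-s)}-1\bigr)\mathbf{Y}_s^N\bigr\|_{L^2}
\lesssim (t-s)^{\delta''/2}\sup_{s\in[0,T]}\|\mathbf{Y}_s^N\|_{H^{\delta''}},
\end{equation*}
where the $C([0,T];H^{\delta''})$ norm of $\mathbf{Y}^N$ is also controlled by the $L^2([0,T];H^{-\beta})$ norm of the forcing (again by the Schauder estimate you already invoke). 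With this correction your argument is complete and coincides in substance with the paper's proof.
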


\begin{proof}
Note that the renormalized exponential in the right hand side of \eqref{eq:statY} is equal to
$$
\exp\left(\alpha P_NX_t(\xi_N)-\frac{\alpha^2}2C_N\right)
=\mathcal{X}_t^{(\exp,N)}(\xi_N).
$$
Similarly to the proof of Lemma \ref{lem:existence}, we have
\begin{align*}
\|e^{\alpha P_N{\bf Y}^N}\|_{C([0,T];C(\Lambda))}\lesssim 
\exp \big( {|\alpha|\|{\bf Y}_0^N\|_{C(\Lambda)}} \big)=1,
\end{align*}
so by the Schauder estimate (see Proposition \ref{prop:Schauder}) and Theorem \ref{thm:func*dist},
\begin{align*}
\mathbb{E}\big [  \|{\bf Y}^N\|_{C^\lambda([0,T];L^2)} \big ]
&\lesssim
\mathbb{E}\Big [  \left\|P_N\left\{e^{\alpha P_N{\bf Y}^N}
\mathcal{X}^{(\exp,N)}(\xi_N)\right\}\right\|_{L^2([0,T];H^{-\beta})}
\Big ]
\\
&\lesssim
\mathbb{E} \Big [  \|e^{\alpha P_N{\bf Y}^N}\|_{C([0,T];C(\Lambda))}
\left\|\mathcal{X}^{(\exp,N)}(\xi_N)\right\|_{L^2([0,T];H^{-\beta})}
\Big ]
\\
&\lesssim
\mathbb{E}
\Big [
\left\|\mathcal{X}^{(\exp,N)}(\xi_N)\right\|_{L^2([0,T];H^{-\beta})}
\Big ].
\end{align*}
Since the Radon-Nikodym derivative 
$\frac{d\mu_N^{(\alpha)}}{d\mu_0}$ is uniformly bounded (see Corollary \ref{cor:expmeas}), 
\begin{align*}
\sup_N\mathbb{E}\Big[  \left\|\mathcal{X}^{(\exp,N)}(\xi_N)\right\|_{L^2([0,T];H^{-\beta})} \Big ]
\lesssim\sup_N E\left[ \int_{\mathcal{D}'(\Lambda)}
\left\|\mathcal{X}^{(\exp,N)}(\phi)\right\|_{L^2([0,T];H^{-\beta})}\mu_0(d\phi) \right] <\infty.
\end{align*}
Hence we obtain the required estimate.
\end{proof}

\begin{thm}
The laws of ${\bar{\bf\Phi}^N}$ in $C([0,T],H^{-\varepsilon})$ are tight. Moreover, for any subsequence $\{\bar{\bf\Phi}^{N_k}\}$ which converges to a process $\bar{\bf\Phi}$ in law, the law of $\bar{\bf\Phi}_t$ is $\mu^{(\alpha )}$ for any $t\ge0$.
\end{thm}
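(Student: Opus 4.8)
The plan is to prove the two assertions separately, using the decomposition $\bar{\bf\Phi}^N = X(\xi_N) + {\bf Y}^N$ that is already in place. For tightness in $C([0,T];H^{-\varepsilon})$, I would combine the uniform bound \eqref{ineq:tightX} for $X(\xi_N)$ with Proposition \ref{prop:tightY} for ${\bf Y}^N$. More precisely, fix $\varepsilon'\in(0,\varepsilon)$ and $\lambda\in(0,(1-\beta)/2)$ small. From \eqref{ineq:tightX} the family $\{X(\xi_N)\}$ is bounded in probability in $C^\lambda([0,T];H^{-\varepsilon'})$, and from Proposition \ref{prop:tightY} the family $\{{\bf Y}^N\}$ is bounded in expectation in $C^\lambda([0,T];L^2)$; hence $\{\bar{\bf\Phi}^N\}$ is bounded in probability in $C^\lambda([0,T];H^{-\varepsilon'})$. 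Since the embedding $H^{-\varepsilon'}\hookrightarrow H^{-\varepsilon}$ is compact, the Arzelà--Ascoli-type embedding $C^\lambda([0,T];H^{-\varepsilon'})\hookrightarrow C([0,T];H^{-\varepsilon})$ is compact (this is the $p=\infty$, $A=H^{-\varepsilon'}$, $B=C=H^{-\varepsilon}$ case of Lemma \ref{Simon}, or just the classical Arzelà--Ascoli theorem). Therefore the closed balls of $C^\lambda([0,T];H^{-\varepsilon'})$ are compact in $C([0,T];H^{-\varepsilon})$, and a Chebyshev/Markov argument on their complements shows that for every $\eta>0$ there is such a ball carrying mass $\ge 1-\eta$ uniformly in $N$. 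This gives tightness.

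For the second assertion, let $\bar{\bf\Phi}^{N_k}\to\bar{\bf\Phi}$ in law in $C([0,T];H^{-\varepsilon})$. For each fixed $t\ge0$, the evaluation map $C([0,T];H^{-\varepsilon})\ni w\mapsto w_t\in H^{-\varepsilon}$ is continuous, so $\bar{\bf\Phi}^{N_k}_t\to\bar{\bf\Phi}_t$ in law in $H^{-\varepsilon}$ by the continuous mapping theorem. On the other hand, since $\mu_{N_k}^{(\alpha)}$ is an invariant measure for ${\bf\Phi}^{N_k}$ and $\xi_{N_k}$ has law $\mu_{N_k}^{(\alpha)}$, the process $\bar{\bf\Phi}^{N_k}$ is stationary, so the law of $\bar{\bf\Phi}^{N_k}_t$ equals $\mu_{N_k}^{(\alpha)}$ for every $t$. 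By Corollary \ref{cor:expmeas}, $\mu_{N_k}^{(\alpha)}\to\mu^{(\alpha)}$ weakly (in $H^{-\varepsilon}$). Uniqueness of weak limits then forces the law of $\bar{\bf\Phi}_t$ to be $\mu^{(\alpha)}$ for every $t\ge0$.

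The only point requiring a little care is matching the topologies: one must check that the weak convergence $\mu_N^{(\alpha)}\to\mu^{(\alpha)}$, which in Corollary \ref{cor:expmeas} may be phrased on $\mathcal{D}'(\Lambda)$ or in a weaker topology, transfers to weak convergence on $H^{-\varepsilon}$, and likewise that the evaluation maps are genuinely continuous into the space on which that weak convergence lives. Both are routine since $\mu_0$ (hence every $\mu_N^{(\alpha)}$ and $\mu^{(\alpha)}$) is supported on $H^{-\varepsilon}$ and the inclusion $H^{-\varepsilon}\hookrightarrow\mathcal{D}'(\Lambda)$ is continuous with dense range; so I do not anticipate a genuine obstacle here. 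The main substantive input is really Proposition \ref{prop:tightY}, already proved, together with the compact-embedding lemma; everything in the present theorem is assembled from those.
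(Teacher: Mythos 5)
Your proposal is correct and follows essentially the same route as the paper: both parts rest on the bound \eqref{ineq:tightX}, Proposition \ref{prop:tightY}, Chebyshev's inequality and the compact embedding between Sobolev spaces for tightness (you package this as a compact ball in $C^\lambda([0,T];H^{-\varepsilon'})$, the paper as an Arzel\`a--Ascoli modulus-of-continuity criterion landing in $C([0,T];H^{-\varepsilon'})$ with $\varepsilon'>\varepsilon$ --- the same argument up to relabeling $\varepsilon$), and on evaluation-map continuity, stationarity and the weak convergence $\mu_{N}^{(\alpha)}\to\mu^{(\alpha)}$ from Corollary \ref{cor:expmeas} for the identification of the marginal law. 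The topology-matching point you flag is indeed harmless, since the uniformly bounded densities $d\mu_N^{(\alpha)}/d\mu_0$ converge $\mu_0$-a.e., so the convergence holds against bounded continuous functions on $H^{-\varepsilon}$.
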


\begin{proof}
By \eqref{ineq:tightX}, Proposition \ref{prop:tightY}, and Chebyshev's inequality, for $h\in (0,1]$ and $\kappa \in (0,1]$, we have
\begin{align*}
\sup _{N\in {\mathbb N}}
{\mathbb P} \left( \sup _{\substack{s,t\in [0,T];\\|s-t|<h}}
\left\| \bar{\bf\Phi}_t^N - \bar{\bf\Phi}_s^N \right\| _{H^{-\varepsilon}} > \kappa \right)
&\leq \frac{h^\lambda}{\kappa}
\mathbb{E}\left[\sup _{\substack{s,t\in [0,T];\\|s-t|<h}}
\frac{\left\| \bar{\bf\Phi}_t^N - \bar{\bf\Phi}_s^N \right\| _{H^{-\varepsilon}}}{(t-s)^{\lambda}} \right] 
\xrightarrow{h \searrow
0}0.
\end{align*}
On the other hand, for any $R>0$,
$$
\sup _{N\in {\mathbb N}} {\mathbb P}  \left( \left\| \bar{\bf\Phi}^N_0 \right\| _{H^{-\varepsilon}} >R \right) 
\leq \frac{1}{R} \sup _{N\in {\mathbb N}} \mathbb{E}\left[ \left\| \xi_N \right\| _{H^{-\varepsilon}} \right]
\xrightarrow{R\to\infty}0.
$$
Since $H^{-\varepsilon}$ is compactly embedded in $H^{-\varepsilon'}$ for any $\varepsilon'>\varepsilon$, we can conclude that $\{\bar{\bf\Phi}^N\}$ is tight in $C([0,T];H^{-\varepsilon'})$.

Let $f$ be a bounded continuous function on $H^{-\varepsilon}$. Then, by the invariance of 
$\mu_N^{(\alpha)}$ with respect to $\bar{\bf\Phi}^N$, we have
\begin{align*}
\mathbb{E}[f(\bar{\bf\Phi}_t)]
=\lim_{k\to\infty}\mathbb{E}[f(\bar{\bf\Phi}^{N_k})]
=\lim_{k\to\infty}\int _{{\mathcal D}'(\Lambda )} f(\phi) \mu_{N_k}^{(\alpha)}(d\phi)
=\int  _{{\mathcal D}'(\Lambda )} f(\phi) \mu^{(\alpha)}(d\phi)
\end{align*}
for any $t\ge0$.
\end{proof}

\subsection{Uniqueness of the limit}

Let $\Phi=\Phi(\phi)$ be the strong solution of \eqref{expsqe} with the initial value $\phi$, as in Theorem \ref{mainthm1}.
Let $\xi$ be a $\mathcal{D}'(\Lambda)$-valued random variable which has the law $\mu^{(\alpha )}$ and is independent of $W$, and define
$$
\bar{\Phi}:=\Phi(\xi).
$$
We show that the law of $\bar{\Phi}$ is the unique limit of $\{\bar{\bf\Phi}^N\}$.

\begin{proof}[{\bfseries Proof of Theorem \ref{mainthm2}}]
We show that
$$
\bar{\bf\Phi}^N\xrightarrow{d}\bar{\Phi}
$$
in $C([0,T];H^{-\varepsilon})$.
Since the family $\{\xi_N\}_{N\in\mathbb{N}}\cup\{\xi\}$ is independent of $W$, we regard the probability space $(\Omega,\mathbb{P})$ as a product space $(\Omega_1\times\Omega_2,\mathbb{P}_1\otimes\mathbb{P}_2)$ where $\{\xi_N\}_{N\in\mathbb{N}}\cup\{\xi\}$ are defined on $(\Omega_1,\mathbb{P}_1)$ and $W$ is defined on $(\Omega_2,\mathbb{P}_2)$.
Since $\xi_N\xrightarrow{d}\xi$ in $H^{-\varepsilon}$ (Corollary \ref{cor:expmeas}), by the Skorokhod's representation theorem, there is another probability space $(\hat{\Omega}_1,\hat{\mathbb{P}}_1)$ and random variables $\hat{\xi}_N$ and $\hat{\xi}$, such that $\hat{\xi}_N\overset{d}{=}\xi_N$, $\hat{\xi}\overset{d}{=}\xi$, and
$$
\hat{\xi}_N\to\hat{\xi}
$$
in $H^{-\varepsilon}$ almost surely.
Let $\hat{\bf\Phi}^N={\bf\Phi}(\hat{\xi}_N)$ be the solution of \eqref{expsqe2} with an initial value $\hat{\xi}_N$, and let $\hat{\Phi}=\Phi(\hat{\xi})$. The stochastic processes $\hat{\bf\Phi}^N$ and $\hat{\Phi}$ are defined on the space $(\hat{\Omega}_1\times\Omega_2,\hat{\mathbb{P}}_1\otimes\mathbb{P}_2)$.
Since $\hat{\bf\Phi}^N\overset{d}{=}\bar{\bf\Phi}^N$ and $\hat{\Phi}\overset{d}{=}\bar{\Phi}$, it is sufficient to show that
$$
\hat{\bf\Phi}^N\to\hat{\Phi}
$$
in $C([0,T];H^{-\varepsilon})$, in probability.

We decompose $\hat{\bf\Phi}^N=X(\hat{\xi}_N)+\hat{\bf Y}^N$ similarly to \eqref{eq:statY}, and decompose $\hat{\Phi}=\hat{X}+\hat{Y}$, where
$$
\hat{X}=X(\hat{\xi}),\quad
\hat{Y}=\mathcal{S}(0,\mathcal{X}^{(\exp,\infty)}(\hat{\xi})).
$$
For the OU terms, we have
\begin{align*}
\|X(\hat{\xi}_N)-X(\hat{\xi})\|_{C([0,T];H^{-\varepsilon})}
=\sup_{t\in[0,T]}\|e^{\frac12(\triangle-1)t}(\hat{\xi}_N-\hat{\xi})\|_{H^{-\varepsilon}}
\le\|\hat{\xi}_N-\hat{\xi}\|_{H^{-\varepsilon}}
\xrightarrow{N\to\infty}0,
\end{align*}
almost surely.
For the remainders, we consider the deterministic initial value problem
\begin{align*}
\left\{
\begin{aligned}
\partial_t\Upsilon_t^N&=\frac12(\triangle-1)\Upsilon_t^N-\frac\alpha2 P_N\left(e^{\alpha P_N\Upsilon_t^N}\mathcal{X}_t^N\right),\\
\Upsilon_0^N&=\upsilon^N,
\end{aligned}
\right.
\end{align*}
for $\mathcal{X}^N\in C([0,T];C_+(\Lambda))$ and $\upsilon^N\in H^{2-\beta}$.
Denote the unique classical global solution by $\Upsilon^N=\mathcal{S}_N(\upsilon^N,\mathcal{X}^N)$.
Similarly to the proof of Lemma \ref{lem:existence}, we can show that, if
$$
\upsilon^N\to\upsilon\quad\text{in $H^{2-\beta}$},\quad
\mathcal{X}^N\to\mathcal{X}\quad\text{in $L^2([0,T];H_+^{-\beta})$},
$$
then one has
$$
\mathcal{S}_N(\upsilon^N,\mathcal{X}^N)
\to\mathcal{S}(\upsilon,\mathcal{X})
\quad\text{in $L^2([0,T];H^{1+\delta})\cap C([0,T];H^\delta)$}
$$
for any $\delta\in(0,1-\beta)$. By using this fact, to show the convergence $\hat{\bf Y}^N\to\hat{Y}$ in probability, it is sufficient to show that
$$
\mathcal{X}^{(\exp,N)}(\hat{\xi}_N)\to\mathcal{X}^{(\exp,\infty)}(\hat{\xi})
$$
in probability. This is a consequence of Lemma \ref{lem:contiexpOU}.
\end{proof}
%
%
%
%
\section{Proof of Theorem \ref{mainthm3}} \label{sec:DF}
In this section, we give a proof of Theorem \ref{mainthm3}. We fix
$\beta\in (\frac{\alpha^{2}}{4\pi},1)$ and set 
$D={\rm Span}\{e_{k};
k\in {\mathbb Z}^{2} \}$,
$H=L^{2}(\Lambda)$ and $E=H^{-\beta}(\Lambda)$.
In what follows, $\langle \cdot, \cdot \rangle$ stands for the pairing of $E$ and its dual space
$E^{*}=H^{\beta}(\Lambda)$.
By Theorem \ref{thm:expgff}, there exists a ${\mathcal B}(E)/{\mathcal B}(E)$-measurable map
which extends ${\rm exp}^{\diamond}(\alpha \cdot) \in L^{2}(\mu_{0}; E)$. We also denote it
by ${\rm exp}^{\diamond}(\alpha \cdot)$.
Let 
$({\mathcal E}, {\mathfrak F}C^{\infty}_{b})$ be the 
pre-Dirichlet form defined by (\ref{DF-intro}).
Applying the integration by parts formula for the ${\rm exp}(\Phi)_{2}$-measure $\mu^{(\alpha)}$
as in \cite{AR91, AKMR19}, we have
\begin{equation*}
{\cal E}(F,G)=-\int_{E} {\cal L}F(\phi) G(\phi) \mu^{(\alpha)}(d\phi),
\quad F,G\in {\mathfrak F}C_{b}^{\infty},
\label{IbP}
\end{equation*}
where
\begin{align*}
\mathcal{L}F (\phi )
&=\frac12\sum_{i, j=1}^n \partial_i \partial_jf(\langle \phi,{l_1}\rangle,\dots,\langle \phi,{l_n}\rangle) \langle l_{i}, l_{j} \rangle
\\
&\quad-\frac12\sum_{j=1}^n \partial_jf(\langle \phi,{l_1}\rangle,\dots,\langle \phi,{l_n}\rangle)
\cdot \big \{
{\big \langle} (1-\triangle)\phi, l_{j}
{\big \rangle}
+\alpha
{\big \langle}
\exp^\diamond(\alpha \phi), {l_j} {\big \rangle}
\big \}
\end{align*}
for $F(\phi)=f(\langle \phi,{l_1}\rangle,\dots,\langle \phi,{l_n}\rangle)$
with $f\in C^{\infty}_{b}(\mathbb R^{n}), l_{1}, \ldots l_{n} \in 
D$.
Note that Theorem \ref{thm:expgff} and Corollary \ref{cor:expmeas} imply ${\mathcal L}F\in L^{2}(\mu ^{(\alpha)})$.
This formula implies that $({\mathcal E}, {\mathfrak F}C^{\infty}_{b})$ is closable on $L^{2}(\mu^{(\alpha)})$.
We denote the closure of (\ref{DF-intro})
by
$({\mathcal E}, {\mathcal D}({\mathcal E}))$.
As mentioned in Section 1.2, $({\mathcal E}, {\mathcal D}({\mathcal E}))$
is a quasi-regular Dirichlet form 
on $L^{2}(\mu^{(\alpha)})$
and we obtain an $E$-valued diffusion
process 
${\mathbb M}=(\Theta, {\mathcal G}, ( {\mathcal G}_{t})_{t\geq 0},  (\Psi_{t})_{t\geq 0}, ({\mathbb Q}_{\phi} )_{\phi \in E})$
properly associated with $({\cal E}, {\cal D(E)})$. By recalling Theorem \ref{thm:expgff} and applying
\cite[Lemma 4.2]{AR91},
we have 
\begin{equation}
{\mathbb E}^{{\mathbb Q}_{\phi}} \Big[
\int_{0}^{T} \Vert \exp^{\diamond}(\alpha \Psi_{t}) \Vert^{2}_{E} dt \Big ]<\infty, \quad T>0, ~\mu^{(\alpha)}
\mbox{-a.e.}~\phi.
\label{Q-1}
\end{equation}
Then (\ref{Q-1}) implies
\begin{equation}
{\mathbb Q}_{\phi} \Big( \int_{0}^{T} 
\Vert \exp^{\diamond}(\alpha \Psi_{t})\Vert_{E} dt<\infty~\mbox{for all }T>0 \Big)=1, \quad \mu^{(\alpha)}
\mbox{-a.e.}~\phi.
\label{Q-2}
\end{equation}
Thus we may apply \cite[Lemma 6.1 and Theorem 6.2]{AR91}, which implies that there exists
a family of independent one-dimensional $({\mathcal G}_{t})$-Brownian motions $\{b^{(k)}
=(b^{(k)}_{t})_{t\geq 0} \}_{k\in \mathbb Z^{2}}$ defined on 
$(\Theta, {\mathcal G}, {\mathbb Q}_{\phi})$ such that
\begin{equation}
\label{Q-weak form-1}
\begin{array}{rl}
\langle \Psi_{t}, e_{k} \rangle=&{\displaystyle{
\langle \phi, e_{k} \rangle+b_{t}^{(k)}
+\frac{1}{2} \int_{0}^{t} \big \langle \Psi_{s}, (\Delta-1)e_{k} \big \rangle ds
}}
\\
&{\displaystyle{-
\frac{\alpha}{2}
 \int_{0}^{t} \big \langle  \exp^{\diamond}(\alpha \Psi_{s}), e_{k} \big \rangle ds, \quad t\geq 0,~{\mathbb Q}_{\phi}
 \mbox{-a.s.},~\mu^{(\alpha)}\mbox{-a.e. } \phi
}}
\end{array}
\end{equation}
for each $k\in {\mathbb Z}^{2}$.
Hence there exists 
an $H$-cylindrical 
$( {\mathcal G}_{t})$-Brownian motion ${\mathcal W}=({\mathcal W}_{t})_{t\geq 0}$
defined on $(\Theta, {\mathcal G}, {\mathbb Q}_{\phi})$ such that
\begin{equation}
\label{Q-weak form}
\begin{array}{rl}
\langle \Psi_{t}, l \rangle=&{\displaystyle{
\langle \phi, l \rangle+\langle {\mathcal W}_{t}, l \rangle
+\frac{1}{2} \int_{0}^{t} \big \langle \Psi_{s}, (\Delta-1)l \big \rangle ds
}}
\\
&{\displaystyle{-
\frac{\alpha}{2}
 \int_{0}^{t} \big \langle  \exp^{\diamond}(\alpha \Psi_{s}), l \big \rangle ds, \quad t\geq 0,~
 l\in D,~
 {\mathbb Q}_{\phi}
 \mbox{-a.s.},~\mu^{(\alpha)}\mbox{-a.e. } \phi.
 }}
\end{array}
\end{equation}
By noting that
$D$ is dense in ${\rm Dom}(\Delta)=H^{2}(\Lambda)$
and 
(\ref{Q-2}), we may apply \cite[Theorem 13]{Ond04}, and thus
we have that
(\ref{Q-weak form}) is equivalent to the mild form of the SPDE (\ref{expsqe}), i.e.,
\begin{equation}
{\Psi}_t=e^{\frac12(\triangle-1)t}\phi
-\frac\alpha2\int_0^te^{\frac12(\triangle-1)(t-s)}\exp^\diamond(\alpha{\Psi}_s)ds
+\int_0^te^{\frac12(\triangle-1)(t-s)}d{\mathcal W}_s, \quad t\geq 0.
\label{Q-mild}
\end{equation}


Now we are going to prove 
that the weak solution $({\Psi},{\mathcal W})$ coincides with the strong solution $\Phi$ driven by ${\mathcal W}$.
We need prepare the following two lemmas.

\begin{lem}[{\cite[Corollary~2.91]{BCD11}}]\label{lem:f(u)}
Let $f$ be a smooth function on $\mathbb{R}$. Let $s>0$ and $p,q\in[1,\infty]$.
For any $u\in B_{p,q}^s\cap L^\infty$, the function $f(u)$ belongs to $B_{p,q}^s\cap L^\infty$.
Moreover, the mapping
$$
B_{p,q}^s\cap L^\infty\ni u\mapsto
f(u)\in B_{p,q}^s\cap L^\infty
$$
is Lipschitz on any bounded set $\{u;\|u\|_{B_{p,q}^s\cap L^\infty}\le K\}$ for $K>0$.
\end{lem}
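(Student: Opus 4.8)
The statement is the classical Moser-type composition estimate in Besov spaces, so the plan is to reduce everything to the Littlewood--Paley decomposition together with the product rule for $B^s_{p,q}\cap L^\infty$. Writing $u=\sum_{j\ge-1}\Delta_j u$ with the low-frequency cut-offs $S_j u=\sum_{j'<j}\Delta_{j'}u$, I would use the telescoping identity $f(u)=f(0)+\sum_{j\ge-1}\bigl(f(S_{j+1}u)-f(S_j u)\bigr)$ (the series converging in $B^s_{p,q}$ once the bounds below are in hand), and observe that $f(S_{j+1}u)-f(S_j u)=m_j\,\Delta_j u$ with $m_j:=\int_0^1 f'\bigl(S_j u+t\,\Delta_j u\bigr)\,dt$. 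Since $\|S_j u\|_{L^\infty}+\|\Delta_j u\|_{L^\infty}\lesssim\|u\|_{L^\infty}$, the multiplier satisfies $\|m_j\|_{L^\infty}\le\sup_{|y|\lesssim\|u\|_{L^\infty}}|f'(y)|$ and, via Bernstein's inequalities applied to $S_j u$ and $\Delta_j u$, one gets $\|\nabla^L m_j\|_{L^\infty}\lesssim 2^{jL}\,C(\|u\|_{L^\infty})$ for every $L\ge0$. The $L^\infty$ part of the conclusion, $\|f(u)\|_{L^\infty}\le\sup_{|y|\le\|u\|_{L^\infty}}|f(y)|$, is then immediate.

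For the Besov bound I would estimate $\Delta_k f(u)=\sum_j\Delta_k(m_j\Delta_j u)$ by splitting at $j\sim k$: for $j\gtrsim k$ simply $\|\Delta_k(m_j\Delta_j u)\|_{L^p}\lesssim\|m_j\|_{L^\infty}\|\Delta_j u\|_{L^p}$, while for $j\ll k$, using that the kernel of $\Delta_k$ has vanishing moments, $\|\Delta_k(m_j\Delta_j u)\|_{L^p}\lesssim 2^{-kL}\|\nabla^L(m_j\Delta_j u)\|_{L^p}\lesssim 2^{-(k-j)L}C(\|u\|_{L^\infty})\|\Delta_j u\|_{L^p}$ by the Leibniz rule and the bounds on $m_j$. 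Writing $\|\Delta_j u\|_{L^p}=c_j 2^{-js}\|u\|_{B^s_{p,q}}$ with $(c_j)\in\ell^q$ of norm one, multiplying by $2^{ks}$ and summing over $j$ produces a convolution of $(c_j)$ against two geometrically decaying sequences---the first because $s>0$, the second once $L>s$ is chosen---so Young's inequality for $\ell^q$ gives $\|f(u)\|_{B^s_{p,q}}\lesssim|f(0)|+C(\|u\|_{L^\infty})\|u\|_{B^s_{p,q}}$.

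For the Lipschitz statement on $\{u:\|u\|_{B^s_{p,q}\cap L^\infty}\le K\}$ I would write $f(u)-f(v)=(u-v)\,g$ with $g:=\int_0^1 f'\bigl(v+t(u-v)\bigr)\,dt$. Each $v+t(u-v)$ is a convex combination, hence has $B^s_{p,q}\cap L^\infty$-norm at most $K$, so the first part applied to $f'$ yields $\|f'\bigl(v+t(u-v)\bigr)\|_{B^s_{p,q}\cap L^\infty}\le C(K)$ uniformly in $t$, whence $\|g\|_{B^s_{p,q}\cap L^\infty}\le C(K)$ after integrating in $t$. Since $B^s_{p,q}\cap L^\infty$ is a Banach algebra for $s>0$ (the product estimate recalled from \cite[Theorems 2.82 and 2.85]{BCD11}), I conclude $\|f(u)-f(v)\|_{B^s_{p,q}\cap L^\infty}\lesssim\|g\|_{B^s_{p,q}\cap L^\infty}\|u-v\|_{B^s_{p,q}\cap L^\infty}\le C(K)\|u-v\|_{B^s_{p,q}\cap L^\infty}$.

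The main obstacle is the middle paragraph: the multiplier $m_j$ is not frequency-localized, so one cannot invoke $\Delta_k\Delta_j=0$ directly and must instead exploit the vanishing-moment/Bernstein trade-off to recover the off-diagonal decay $2^{-|k-j|L}$, while at the same time ensuring that every constant depends on $u$ only through $\|u\|_{L^\infty}$ (and, in the Lipschitz bound, only through $K$). Alternatively, the statement may simply be quoted from \cite[Corollary~2.91]{BCD11}.
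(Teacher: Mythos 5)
The paper does not prove this lemma at all---it simply quotes \cite[Corollary~2.91]{BCD11}---and your argument is precisely the standard Meyer first-linearization proof that underlies the cited result (telescoping $f(S_{j+1}u)-f(S_ju)=m_j\Delta_ju$, Bernstein bounds on $\nabla^Lm_j$, off-diagonal decay from vanishing moments, then the algebra property of $B^s_{p,q}\cap L^\infty$ together with the composition bound for $f'$ for the Lipschitz part), so it is essentially the same approach as the source the paper relies on, and it is correct. The only points worth making explicit are that on the torus constants belong to every $B^s_{p,q}$, so the $f(0)$ term is harmless (on $\mathbb{R}^d$ one would need $f(0)=0$ as in \cite{BCD11}), and that convergence of the telescoping series follows from the uniform partial-sum bounds together with $S_Ju\to u$ boundedly a.e.; both are routine.
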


\begin{lem}\label{lem:wexplaw}
Let $\phi\in E$ and $f\in H^{1+\delta}$.
Assume that the following convergences hold.
\begin{align*}
\exp^\diamond(\alpha\phi)&=\lim_{N\to\infty}\exp_N^\diamond(\alpha\phi)
\quad\text{in $H^{-\beta}$},\\
\exp^\diamond(\alpha(f+\phi))&=\lim_{N\to\infty}\exp_N^\diamond(\alpha(f+\phi))
\quad\text{in $B_{1,1}^{-\beta}$}.
\end{align*}
Then one has the equality
$$
\exp^\diamond(\alpha(f+\phi))
=e^{\alpha f}\exp^\diamond(\alpha\phi).
$$
\end{lem}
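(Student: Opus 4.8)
The plan is to exploit the exact multiplicative identity satisfied by the approximations, and then pass to the limit using the stability of the product with nonnegative distributions developed in Section~\ref{sec:nnegdist}. For every finite $N$, linearity of $P_N$ together with the definition of the approximating Wick exponential (recall $\exp_N^\diamond(\alpha\psi)(x)=e^{\alpha P_N\psi(x)-\frac{\alpha^2}2C_N}$ for any $\psi\in H^{-2}$, with $C_N$ the same constant regardless of $\psi$) gives the pointwise identity
\begin{equation*}
\exp_N^\diamond(\alpha(f+\phi))(x)=\exp\Big(\alpha P_Nf(x)+\alpha P_N\phi(x)-\frac{\alpha^2}2C_N\Big)=e^{\alpha P_Nf(x)}\exp_N^\diamond(\alpha\phi)(x),\qquad x\in\Lambda.
\end{equation*}
Since $\exp_N^\diamond(\alpha\phi)$ is a nonnegative continuous function on $\Lambda$, the right-hand side is nothing but $\mathcal{M}(e^{\alpha P_Nf},\exp_N^\diamond(\alpha\phi))$.

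Next I would record the convergences of the two factors. Because $f\in H^{1+\delta}$ with $1+\delta>1$, the Sobolev embedding gives $f\in C(\Lambda)$, and the approximation property of $P_N$ yields $P_Nf\to f$ in $H^{1+\delta}$, hence in $C(\Lambda)$; by the mean value theorem (or by Lemma~\ref{lem:f(u)} applied to the function $y\mapsto e^{\alpha y}$) this gives $e^{\alpha P_Nf}\to e^{\alpha f}$ in $C(\Lambda)$. For the other factor, the first hypothesis gives $\exp_N^\diamond(\alpha\phi)\to\exp^\diamond(\alpha\phi)$ in $H^{-\beta}$, which (identifying $H^{-\beta}$ with $B_{2,2}^{-\beta}$ with equivalent norms) is convergence in $B_{2,2}^{-\beta}$; moreover, since each $\exp_N^\diamond(\alpha\phi)$ is nonnegative, so is its $\mathcal{D}'$-limit, so $\exp^\diamond(\alpha\phi)\in B_{2,2}^{-\beta,+}$.

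Then I would pass to the limit. By Theorem~\ref{thm:stableM} (with $p=q=2$, $s=\beta$), the map $\mathcal{M}$ is continuous on $C(\Lambda)\times B_{2,2}^{-\beta,+}$, whence
\begin{equation*}
\exp_N^\diamond(\alpha(f+\phi))=\mathcal{M}(e^{\alpha P_Nf},\exp_N^\diamond(\alpha\phi))\xrightarrow{N\to\infty}\mathcal{M}(e^{\alpha f},\exp^\diamond(\alpha\phi))\quad\text{in }H^{-\beta},
\end{equation*}
hence in $\mathcal{D}'(\Lambda)$. On the other hand, the second hypothesis gives $\exp_N^\diamond(\alpha(f+\phi))\to\exp^\diamond(\alpha(f+\phi))$ in $B_{1,1}^{-\beta}$, hence also in $\mathcal{D}'(\Lambda)$. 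By uniqueness of limits in $\mathcal{D}'(\Lambda)$ we obtain $\exp^\diamond(\alpha(f+\phi))=\mathcal{M}(e^{\alpha f},\exp^\diamond(\alpha\phi))=e^{\alpha f}\exp^\diamond(\alpha\phi)$, which is the claim.

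The argument is essentially soft once the finite-$N$ identity is in place, so I do not expect a serious obstacle. The only mild point is reconciling the two a priori different topologies ($H^{-\beta}$ produced by the continuity of $\mathcal{M}$, and $B_{1,1}^{-\beta}$ coming from the hypothesis on the shifted Wick exponential), and this is handled simply by comparing both limits in $\mathcal{D}'(\Lambda)$; in fact the identity shows that $\exp_N^\diamond(\alpha(f+\phi))$ already converges in $H^{-\beta}$, so the weaker $B_{1,1}^{-\beta}$ hypothesis is used only to name the limit.
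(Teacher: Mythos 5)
Your proof is correct and follows essentially the same route as the paper: both rest on the finite-$N$ identity $\exp_N^\diamond(\alpha(f+\phi))=e^{\alpha P_Nf}\exp_N^\diamond(\alpha\phi)$, pass to the limit in this product via the stability results of Section \ref{sec:nnegdist}, and conclude by uniqueness of limits against the $B_{1,1}^{-\beta}$ hypothesis. The only minor difference is the limit step: the paper bounds $\|\exp_N^\diamond(\alpha(f+\phi))-e^{\alpha f}\exp^\diamond(\alpha\phi)\|_{B_{1,1}^{-\beta}}$ by a two-term splitting, using $e^{\alpha P_Nf}\to e^{\alpha f}$ in $H^{\beta}$ (via Lemma \ref{lem:f(u)}) together with Theorem \ref{thm:func*dist}, whereas you use convergence of $e^{\alpha P_Nf}$ only in $C(\Lambda)$ and the continuity of $\mathcal{M}$ on $C(\Lambda)\times B_{2,2}^{-\beta,+}$ (Theorem \ref{thm:stableM}) to get convergence in $H^{-\beta}$, then identify the two limits in $\mathcal{D}'(\Lambda)$.
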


\begin{proof}
Since $\exp_N^\diamond(\alpha(f+\phi))=\exp(\alpha P_Nf)\exp_N^\diamond(\alpha\phi)$ by definition, we have
\begin{align*}
&\|\exp_N^\diamond(\alpha(f+\phi))-e^{\alpha f}\exp^\diamond(\alpha\phi)\|_{B_{1,1}^{-\beta}}\\
&\lesssim\|\exp(\alpha P_Nf)-\exp(\alpha f)\|_{H^\beta}\|\exp_N^\diamond(\alpha\phi)\|_{H^{-\beta}}\\
&\quad+\|\exp(\alpha f)\|_{C(\Lambda)}\|\exp_N^\diamond(\alpha\phi)-\exp^\diamond(\alpha\phi)\|_{H^{-\beta}}
\end{align*}
by Lemma \ref{thm:func*dist} and Theorem \ref{thm:stableM}.
The second term in the right hand side converges to $0$ by assumption.
For the first term, since $P_Nf$ is uniformly bounded in $H^{1+\delta}$,
\begin{align*}
\|\exp(\alpha P_Nf)-\exp(\alpha f)\|_{H^\beta}
\lesssim_f\|P_Nf-f\|_{H^\beta\cap C(\Lambda)}
\lesssim\|P_Nf-f\|_{H^{1+\delta}}
\end{align*}
by Lemma \ref{lem:f(u)}.
Since $f\in H^{1+\delta}$, we have that $\lim_{N\to\infty}\|P_Nf-f\|_{H^{1+\delta}}=0$.
Therefore we have the required equality.
\end{proof}

Let ${\mathfrak X}={\mathfrak X}(\phi)$ be the OU process 
driven by $\mathcal W$ with an initial value ${\mathfrak X}_0=\phi\in E$. 

\begin{thm}
For any $\mu^{(\alpha)}$-a.e. $\phi\in E$, the equality
$$
{\Psi}={\mathfrak X}(\phi)+\mathcal{S}(0,\exp^\diamond(\alpha{\mathfrak X}(\phi))),
$$
holds ${\mathbb Q}_\phi$-almost surely, and hence Theorem \ref{mainthm3} follows.
\end{thm}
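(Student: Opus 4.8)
The plan is to identify $\tilde Y:=\Psi-{\mathfrak X}(\phi)$ with $\mathcal{S}\bigl(0,\exp^\diamond(\alpha{\mathfrak X}(\phi))\bigr)$ by showing that $\tilde Y$ is a mild solution, lying in $\mathscr{Y}_T$, of the shifted equation \eqref{eq:Ups} with data $(\upsilon,\mathcal{X})=\bigl(0,\exp^\diamond(\alpha{\mathfrak X}(\phi))\bigr)$, and then invoking the uniqueness part of Theorem \ref{thm:solmapUps}. Writing ${\mathfrak X}={\mathfrak X}(\phi)$ in the mild form \eqref{eq:OUmild} with $\mathcal W$ in place of $W$ and subtracting it from \eqref{Q-mild}, the Ornstein--Uhlenbeck and stochastic integral parts cancel, so that, ${\mathbb Q}_\phi$-a.s.,
\begin{equation*}
\tilde Y_t=-\frac\alpha2\int_0^te^{\frac12(\triangle-1)(t-s)}\exp^\diamond(\alpha\Psi_s)\,ds,\qquad t\ge0.
\end{equation*}
Since the law of $\Psi_s$ is absolutely continuous with respect to $\mu_0$ for a.e.\ $s$ and $\mu^{(\alpha)}$-a.e.\ $\phi$ (which is what underlies \eqref{Q-1}), Theorem \ref{thm:expgff} gives $\exp^\diamond(\alpha\Psi_s)=\lim_N\exp_N^\diamond(\alpha\Psi_s)$ in $H^{-\beta}$, and this limit is a nonnegative distribution; combined with \eqref{Q-1} this yields $\exp^\diamond(\alpha\Psi_\cdot)\in L^2([0,T];H_+^{-\beta})$, ${\mathbb Q}_\phi$-a.s.

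First I would check $\tilde Y\in\mathscr{Y}_T$. Applying the Schauder/smoothing estimate (Proposition \ref{prop:Schauder}) to the displayed heat convolution of $\exp^\diamond(\alpha\Psi_\cdot)\in L^2([0,T];H^{-\beta})$ gives $\tilde Y\in L^2([0,T];H^{2-\beta})\cap C([0,T];H^{1-\beta})\cap C^{(1-\beta)/2}([0,T];L^2)$; since $2-\beta>1$ we have $H^{2-\beta}\hookrightarrow C(\Lambda)\cap H^1$, so only the bound on $e^{\alpha\tilde Y}$ remains. By Theorem \ref{thm:LL} each $\exp^\diamond(\alpha\Psi_s)$ is a nonnegative measure, and positivity of the heat kernel makes $e^{\frac12(\triangle-1)(t-s)}\exp^\diamond(\alpha\Psi_s)$ a nonnegative continuous function; integrating in $s$ shows $\tilde Y_t$ has the sign of $-\alpha$ for a.e.\ $t$, so $\alpha\tilde Y_t\le0$ and $\|e^{\alpha\tilde Y}\|_{L^\infty([0,T];C(\Lambda))}\le1$, exactly as in the proof of Lemma \ref{lem:existence}. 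Hence $\tilde Y\in\mathscr{Y}_T$.

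The key step is the Wick-exponential factorisation $\exp^\diamond(\alpha\Psi_s)=\mathcal{M}\bigl(e^{\alpha\tilde Y_s},\exp^\diamond(\alpha{\mathfrak X}_s)\bigr)$ for a.e.\ $s$, which I would obtain from Lemma \ref{lem:wexplaw} applied with $f=\tilde Y_s$ and $\phi={\mathfrak X}_s$, using $\Psi_s={\mathfrak X}_s+\tilde Y_s$. For a.e.\ $s$ we have $\tilde Y_s\in H^{2-\beta}\subset H^{1+\delta}$ with $\delta\in(0,1-\beta)$, so the regularity hypothesis holds; the first convergence hypothesis $\exp_N^\diamond(\alpha{\mathfrak X}_s)\to\exp^\diamond(\alpha{\mathfrak X}_s)$ in $H^{-\beta}$ holds for a.e.\ $s$ by Theorem \ref{thm:expOU} (applied with $\mathcal W$), and for the second, $\exp_N^\diamond(\alpha\Psi_s)\to\exp^\diamond(\alpha\Psi_s)$ in $B_{1,1}^{-\beta}$, I would fix an auxiliary exponent $\beta'\in(\tfrac{\alpha^2}{4\pi},\beta)$ and note that by Theorem \ref{thm:expgff} the convergence holds in $H^{-\beta'}$ for a.e.\ $s$, while on the torus $H^{-\beta'}=B_{2,2}^{-\beta'}\hookrightarrow B_{1,2}^{-\beta'}\hookrightarrow B_{1,1}^{-\beta}$ since $\beta'<\beta$. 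Substituting this factorisation into the displayed identity shows that $\tilde Y$ is a mild solution of \eqref{eq:Ups} in $\mathscr{Y}_T$, with $(\upsilon,\mathcal{X})=\bigl(0,\exp^\diamond(\alpha{\mathfrak X}(\phi))\bigr)\in H^{2-\beta}\times L^2([0,T];H_+^{-\beta})$, the membership of $\exp^\diamond(\alpha{\mathfrak X}(\phi))$ being Theorem \ref{thm:expOU}.

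By the uniqueness assertion of Theorem \ref{thm:solmapUps} (i.e.\ Lemma \ref{lem:unique}) we conclude $\tilde Y=\mathcal{S}\bigl(0,\exp^\diamond(\alpha{\mathfrak X}(\phi))\bigr)$, that is $\Psi={\mathfrak X}(\phi)+\mathcal{S}\bigl(0,\exp^\diamond(\alpha{\mathfrak X}(\phi))\bigr)$ holds ${\mathbb Q}_\phi$-a.s.\ for $\mu^{(\alpha)}$-a.e.\ $\phi$; since the right-hand side is, by the definition of the strong solution in Theorem \ref{mainthm1}, precisely $\Phi(\phi)$ driven by $\mathcal W$, and $(\Psi,\mathcal W)$ satisfies \eqref{Q-mild}, i.e.\ the mild form of \eqref{expsqe}, Theorem \ref{mainthm3} follows. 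I expect the main obstacle to be the regularity bookkeeping in the middle: verifying that the field $\exp^\diamond(\alpha\Psi_\cdot)$ produced on the Dirichlet-form side genuinely lies in $L^2([0,T];H_+^{-\beta})$, that its heat convolution $\tilde Y$ carries the $H^{1+\delta}$-in-space regularity and the sign needed to split off the continuous factor $e^{\alpha\tilde Y_s}$, and that the Besov exponents line up so that Lemma \ref{lem:wexplaw} is genuinely applicable pointwise in $s$.
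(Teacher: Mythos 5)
Your proposal is correct and follows essentially the same route as the paper: decompose $\Psi={\mathfrak X}(\phi)+\mathfrak{Y}$, use the a priori bound on $\exp^\diamond(\alpha\Psi)$ in $L^2([0,T];H^{-\beta}_+)$ together with the Schauder estimate and the sign argument of Lemma \ref{lem:existence} to put $\mathfrak{Y}$ in $\mathscr{Y}_T$, apply Lemma \ref{lem:wexplaw} pointwise in $t$ (the paper secures the a.e.-$t$ hypotheses via a Fubini/invariance argument with a full-measure set $A$, which is the precise form of your ``absolute continuity of the law of $\Psi_s$'' remark), and conclude by the uniqueness in Lemma \ref{lem:unique}/Theorem \ref{thm:solmapUps}. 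Your treatment of the $B_{1,1}^{-\beta}$ convergence hypothesis through an auxiliary exponent $\beta'\in(\frac{\alpha^2}{4\pi},\beta)$ and the embedding $H^{-\beta'}\hookrightarrow B_{1,1}^{-\beta}$ is in fact a slightly more careful bookkeeping than the paper's, but the argument is otherwise the same.
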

\begin{proof}
We decompose ${\Psi}={\mathfrak X}(\phi)+{\mathfrak Y}$. 
For $\mu^{(\alpha)}$-a.e. $\phi$, 
${\Psi}$ solves the mild equation (\ref{Q-mild}).
The second term on the right-hand side of 
(\ref{Q-mild}) is nothing but the remainder ${\mathfrak Y}$.
To show the result, it is sufficient to show that
$$
{\mathbb Q}_\phi\Bigl({\mathfrak Y}=\mathcal{S}(0,\exp^\diamond(\alpha{\mathfrak X}(\phi)))\Bigr)=1,
\quad\text{$\mu^{(\alpha)}$-a.e. $\phi$.}
$$
By the invariance of $\mu^{(\alpha)}$ under ${\Psi}$ and Lemma \ref{lem:contiexpOU},
\begin{align*}
\int_E
{\mathbb E}^{{\mathbb Q}_{\phi}}
{\Big [}
\|\exp^\diamond(\alpha{\Psi})\|_{L^2([0,T];H^{-\beta})}^2 
{\Big ]}
\mu^{(\alpha)}(d\phi)
=\int_0^Tdt\int_E\|\exp^\diamond(\alpha\phi)\|_{H^{-\beta}}^2\mu^{(\alpha)}(d\phi)
<\infty.
\end{align*}
In particular,
$$
{\mathbb Q}_\phi\Bigl (
\exp^\diamond(\alpha{\Psi})\in L^2([0,T];H^{-\beta})\Bigr )=1,
\quad\text{$\mu^{(\alpha)}$-a.e. $\phi$.}
$$
Then since ${\mathfrak Y}$ belongs to $L^2([0,T];H^{1+\delta})\cap C([0,T];H^\delta)$ by 
the Schauder estimate, we can check that
$$
{\mathbb Q}_\phi
\big (
{\mathfrak Y}\in\mathscr{Y}_T\big )=1,
\quad\text{$\mu^{(\alpha)}$-a.e. $\phi$,}
$$
similarly to the proof of Lemma \ref{lem:existence}.
Let $A$ be  the set of all $\phi\in E$ such that the convergence
$$
\exp^\diamond(\alpha\phi)
=\lim_{N\to\infty}\exp_N^\diamond(\alpha\phi)
$$
holds in $H^{-\beta}$. Recall that $\mu_0(A)=1$, so $\mu^{(\alpha)}(A)=1$ 
by the absolute continuity (see Corollary \ref{cor:expmeas}).
By using the invariance of $\mu^{(\alpha)}$ under ${\Psi}$,
\begin{align*}
\int_E{\mathbb E}^{{\mathbb Q}_{\phi}}
\left[\int_0^T\mathbf{1}_{A^c}({\Psi}_t)dt\right]\mu^{(\alpha)}(d\phi)
=\int_0^T\int_E\mathbf{1}_{A^c}(\phi)\mu^{(\alpha)}(d\phi)
=T\mu^{(\alpha)}(A^c)=0.
\end{align*}
Similarly, by the invariance of $\mu_0$ under ${\mathfrak X}$,
\begin{align*}
\int_E
{\mathbb E}^{{\mathbb Q}_\phi}
\left[\int_0^T\mathbf{1}_{A^c}({\mathfrak X}_t)dt\right]\mu^{(\alpha)}(d\phi)
&\lesssim \int_E
{\mathbb E}^{{\mathbb Q}_\phi}
\left[\int_0^T\mathbf{1}_{A^c}({\mathfrak X}_t)dt
\right]\mu_0(d\phi)\\
&=\int_0^T\int_E\mathbf{1}_{A^c}(\phi)\mu_0(d\phi)
=T\mu_0(A^c)=0.
\end{align*}
As a result,
\begin{align*}
{\mathbb Q}_\phi \Big(
{\Psi}_t\in A,\ {\mathfrak X}_t\in A,\ \text{a.e. $t$}
\Big )
=1,
\quad\text{$\mu^{(\alpha)}$-a.e. $\phi$.}
\end{align*}
Since ${\mathfrak Y}\in L^2([0,T];H^{1+\delta})$ 
holds ${\mathbb Q}_\phi$-almost surely, we additionally get
\begin{align*}
{\mathbb Q}_\phi
\Big(
{\Psi}_t\in A,\ {\mathfrak X}_t\in A,\ {\mathfrak Y}_t\in H^{1+\delta},\ \text{a.e. $t$}
\Big )
=1.
\end{align*}
Hence by noting that ${\Psi} = {\mathfrak X}(\phi)+{\mathfrak Y}$ and 
applying Lemma \ref{lem:wexplaw}, we have
$$
{\mathbb Q}_\phi
\Big (
\exp^\diamond(\alpha{\Psi}_t)=e^{\alpha{\mathfrak Y}_t}\cdot\exp^\diamond(\alpha{\mathfrak X}_t),
\ \text{a.e. $t$}
\Big )=1,
\quad\text{$\mu^{(\alpha)}$-a.e. $\phi$},
$$
which yields that ${\mathfrak Y}$ is a mild solution of 
\eqref{eq:Ups} with $(\upsilon,\mathcal{X})=(0,\exp^\diamond(\alpha{\mathfrak X}))$.
\end{proof}
%
%
%
\vspace{4mm}
\noindent
{\bf Acknowledgements.}
The authors thank anonymous referees for helpful suggestions that 
improved the quality of the present paper.
This work was partially supported by JSPS KAKENHI Grant Numbers 17K05300, 17K14204 
and 19K14556.

\renewcommand{\thesection}{\Alph{section}}

\setcounter{section}{0}

\section{Appendix}

\subsection{Besov space}

Let $(\chi,\rho)$ be a dyadic partition of unity, i.e., they are smooth radial functions on $\mathbb{R}^2$ such that,
\begin{itemize}
\item $0\le\chi\le1$, $0\le\rho\le1$,
\item $\chi$ is supported in $\{x;|x|\le\frac43\}$, $\rho$ is supported in $\{x;\frac34\le|x|\le\frac83\}$,
\item $\chi(\xi)+\sum_{j=0}^\infty\rho(2^{-j}\xi)=1$ for any $\xi\in\mathbb{R}^2$.
\end{itemize}
Denote $\rho_{-1}=\chi$ and $\rho_j=\rho(2^{-j}\cdot)$ for $j\ge0$. Define
$$
\Delta_jf=\sum_{k\in\mathbb{Z}^2}\rho_j(k)\langle f, {\bf e}_k\rangle {\bf e}_k.
$$
For $s\in\mathbb{R}$ and $p,q\in[1,\infty]$, we define the inhomogeneous Besov norm
$$
\|f\|_{B_{p,q}^s}:=\left\|\{2^{js}\|\Delta_jf\|_{L^p(\Lambda)}\}_{j\ge-1}\right\|_{\ell^q}.
$$

\begin{prop}[{\cite[Page 99]{BCD11}}]
For any $s\in\mathbb{R}$, $H^s=B_{2,2}^s$.
\end{prop}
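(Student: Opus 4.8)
The plan is to transfer both norms to the Fourier side, where they become weighted $\ell^2$-sums over $k\in\mathbb{Z}^2$, and to reduce the whole statement to a single elementary pointwise estimate on the Littlewood--Paley multipliers. Since $\{{\bf e}_k\}_{k\in\mathbb{Z}^2}$ is an orthonormal system in $L^2(\Lambda;\mathbb{C})$ and $\Delta_jf=\sum_k\rho_j(k)\langle f,{\bf e}_k\rangle{\bf e}_k$, Parseval's identity gives $\|\Delta_jf\|_{L^2(\Lambda)}^2=\sum_k\rho_j(k)^2|\langle f,{\bf e}_k\rangle|^2$; multiplying by $2^{2js}$, summing over $j\ge-1$ and interchanging the (non-negative) sums, I would obtain
\[
\|f\|_{B_{2,2}^s}^2=\sum_{k\in\mathbb{Z}^2}m_s(k)\,|\langle f,{\bf e}_k\rangle|^2,\qquad m_s(k):=\sum_{j\ge-1}2^{2js}\rho_j(k)^2,
\]
while $\|f\|_{H^s}^2=\sum_k(1+|k|^2)^s|\langle f,{\bf e}_k\rangle|^2$ by definition. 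Hence it suffices to prove $m_s(k)\simeq(1+|k|^2)^s$ uniformly in $k\in\mathbb{Z}^2$, with constants depending only on $s$ and on the dyadic partition of unity; the resulting equivalence of norms then yields the identity of the two spaces.

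For the pointwise bound I would exploit the annular supports. Since $\rho$ is supported in $\{\tfrac34\le|\xi|\le\tfrac83\}$ and $\chi=\rho_{-1}$ in $\{|\xi|\le\tfrac43\}$, the set $\mathcal{J}(k):=\{j\ge-1;\rho_j(k)\ne0\}$ is nonempty (because $\sum_{j\ge-1}\rho_j(k)=1$) and has cardinality bounded by an absolute constant $M$ (consecutive dyadic annuli overlap only a bounded number of times), while every $j\in\mathcal{J}(k)$ with $j\ge0$ satisfies $\tfrac34\,2^j\le|k|\le\tfrac83\,2^j$; for such $k$ one has $|k|\ge1$, so $1+|k|^2\simeq|k|^2\simeq2^{2j}$ and hence $2^{2js}\simeq(1+|k|^2)^s$ uniformly over $j\in\mathcal{J}(k)$. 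The finitely many $k$ with $-1\in\mathcal{J}(k)$, that is $|k|\le\tfrac43$, are checked by hand, the weight $(1+|k|^2)^s$ being bounded above and below there.

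Granting this, the upper bound follows from $0\le\rho_j\le1$, which gives $\rho_j(k)^2\le\rho_j(k)$ and therefore
\[
m_s(k)\le\sum_{j\ge-1}2^{2js}\rho_j(k)\le\Big(\max_{j\in\mathcal{J}(k)}2^{2js}\Big)\sum_{j\ge-1}\rho_j(k)=\max_{j\in\mathcal{J}(k)}2^{2js}\lesssim(1+|k|^2)^s.
\]
For the lower bound, Cauchy--Schwarz over the at most $M$ nonzero terms yields $1=\big(\sum_{j\ge-1}\rho_j(k)\big)^2\le M\sum_{j\ge-1}\rho_j(k)^2$, whence
\[
m_s(k)\ge\Big(\min_{j\in\mathcal{J}(k)}2^{2js}\Big)\sum_{j\ge-1}\rho_j(k)^2\ge\frac1M\min_{j\in\mathcal{J}(k)}2^{2js}\gtrsim(1+|k|^2)^s.
\]
Combining the two displays gives $m_s(k)\simeq(1+|k|^2)^s$, which is exactly what is needed.

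The only mildly technical point is the bookkeeping behind the finite-overlap claim --- that $\mathcal{J}(k)$ has bounded size and that $2^j\simeq|k|$ for $j\ge0$ in $\mathcal{J}(k)$ --- together with the separate (finite) treatment of the low-frequency block $\chi$; neither presents a genuine obstacle, and the statement is in any case the classical Littlewood--Paley characterisation of the $L^2$-Sobolev scale, so one may simply invoke \cite[Chapter~2]{BCD11}.
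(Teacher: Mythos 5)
Your proof is correct. Note that the paper itself offers no argument for this proposition: it is quoted verbatim from \cite[Page 99]{BCD11} as a classical fact, so there is no internal proof to compare against. What you have written is the standard Littlewood--Paley characterisation of the $L^2$-Sobolev scale, transplanted from $\mathbb{R}^d$ to Fourier series on the torus: Parseval turns both norms into weighted $\ell^2$-sums over $k\in\mathbb{Z}^2$, and the whole matter reduces to $\sum_{j\ge-1}2^{2js}\rho_j(k)^2\simeq(1+|k|^2)^s$, which you obtain from the partition-of-unity identity $\sum_j\rho_j(k)=1$, the bound $0\le\rho_j\le1$ for the upper estimate, Cauchy--Schwarz over the boundedly many overlapping annuli for the lower estimate, and the relation $2^j\simeq|k|$ on the support of $\rho_j$ (with the finitely many low-frequency modes $|k|\le\tfrac43$ handled separately, which is where the sign of $s$ ceases to matter). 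All steps are sound, including the interchange of the nonnegative sums and the uniformity in $s$ of the constants, so your argument is a complete self-contained substitute for the citation; the only difference from the paper is that the paper delegates exactly this computation to \cite{BCD11}.
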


\subsection{Schauder estimates}

\begin{prop}[{\cite[Propositions~5 and 6]{MW17a}}]\label{prop:heatsemigr}
Let $s\in\mathbb{R}$, $p,q\in[1,\infty]$ and $\mu>0$.
\begin{enumerate}
\item \label{prop:heatsemigr1} For every $\delta\ge0$, $\|e^{\frac12(\triangle-1)t}u\|_{B_{p,q}^{s+2\delta}}\lesssim t^{-\delta}\|u\|_{B_{p,q}^s}$ uniformly over $t>0$.
\item \label{prop:heatsemigr2} For every $\delta\in[0,1]$, $\|(e^{\frac12(\triangle-1)t}-1)u\|_{B_{p,q}^{s-2\delta}}\lesssim t^\delta\|u\|_{B_{p,q}^s}$ uniformly over $t>0$.
\end{enumerate}
\end{prop}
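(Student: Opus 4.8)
The plan is to reduce both estimates to frequency-localized $L^p$ bounds via the Littlewood--Paley characterization of the Besov norm recalled just above. Since $e^{\frac12(\triangle-1)t}$ is a Fourier multiplier, it commutes with each block $\Delta_j$, so $\Delta_j e^{\frac12(\triangle-1)t}u=e^{\frac12(\triangle-1)t}\Delta_j u$, and by the very definition of the norm it suffices to control $2^{js}\|e^{\frac12(\triangle-1)t}\Delta_j u\|_{L^p}$ block by block and then take the $\ell^q$ norm in $j$. On the spectral support of $\Delta_j$ (the annulus $|k|\simeq2^j$ for $j\ge0$, and $|k|\lesssim1$ for $j=-1$) the symbol $e^{-\frac12(1+|k|^2)t}$ is comparable to $e^{-c2^{2j}t}$, with the mass term contributing an extra factor $e^{-t/2}$ that is harmless at low frequency. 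The heart of the argument is therefore the single-block smoothing estimate: for $u$ with Fourier support in $\{|k|\simeq2^j\}$,
\[
\|e^{\frac12(\triangle-1)t}\Delta_j u\|_{L^p}\lesssim e^{-c2^{2j}t}\|\Delta_j u\|_{L^p},
\]
uniformly in $t>0$ and $j$.

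For (i), granting this estimate, I would write
\[
2^{j(s+2\delta)}\|e^{\frac12(\triangle-1)t}\Delta_j u\|_{L^p}\lesssim 2^{2j\delta}e^{-c2^{2j}t}\,2^{js}\|\Delta_j u\|_{L^p},
\]
and bound the prefactor using $\sup_{x\ge0}x^\delta e^{-cx}=:C_\delta<\infty$ with $x=2^{2j}t$, which gives $2^{2j\delta}e^{-c2^{2j}t}=t^{-\delta}(2^{2j}t)^\delta e^{-c2^{2j}t}\lesssim t^{-\delta}$. Hence each block obeys $2^{j(s+2\delta)}\|e^{\frac12(\triangle-1)t}\Delta_j u\|_{L^p}\lesssim t^{-\delta}\,2^{js}\|\Delta_j u\|_{L^p}$, and taking the $\ell^q$ norm in $j$ yields the claim. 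For the block $j=-1$ the factor $2^{2j\delta}$ is a constant and $e^{-\frac12(1+|k|^2)t}\le e^{-t/2}\lesssim t^{-\delta}$ uniformly in $t>0$, so the same bound holds.

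For (ii), I would instead use the frequency-localized difference estimate
\[
\|(e^{\frac12(\triangle-1)t}-1)\Delta_j u\|_{L^p}\lesssim(2^{2j}t)^\delta\|\Delta_j u\|_{L^p},\qquad\delta\in[0,1],
\]
which comes from the elementary bound $|e^{-a}-1|\le a^\delta$ (valid for $a\ge0$, $\delta\in[0,1]$, since $|e^{-a}-1|\le\min\{a,1\}\le a^\delta$) applied to the symbol $a=\frac12(1+|k|^2)t\simeq2^{2j}t$ on the annulus. Then
\[
2^{j(s-2\delta)}\|(e^{\frac12(\triangle-1)t}-1)\Delta_j u\|_{L^p}\lesssim 2^{j(s-2\delta)}2^{2j\delta}t^\delta\|\Delta_j u\|_{L^p}=t^\delta\,2^{js}\|\Delta_j u\|_{L^p},
\]
so the loss $2^{-2j\delta}$ in the target regularity is exactly compensated by the symbol gain $2^{2j\delta}$; taking $\ell^q$ in $j$ gives the bound, and the case $j=-1$ is again immediate.

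The main obstacle is upgrading the pointwise symbol bounds to genuine $L^p\to L^p$ operator bounds, i.e. the two frequency-localized estimates stated above. The standard route is to realize $e^{\frac12(\triangle-1)t}\Delta_j$ (respectively $(e^{\frac12(\triangle-1)t}-1)\Delta_j$) as convolution against a kernel whose symbol is the product of the multiplier with a fattened annular cutoff, and to bound the $L^1$ norm of that kernel by $e^{-c2^{2j}t}$ (respectively $(2^{2j}t)^\delta$) through a scaling $k=2^j\eta$ together with the smoothness and rapid decay of the Gaussian-type symbol; Young's inequality then transfers the $L^1$ kernel bound to the $L^p$ operator bound. On the torus one extra point requires care: the blocks are Fourier \emph{series} rather than transforms, so the kernel estimates must be carried out for periodic kernels (e.g.\ via Poisson summation or by comparison with the corresponding estimate on $\mathbb{R}^2$), but the annular localization makes this transfer routine. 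This is precisely the content of \cite[Propositions 5 and 6]{MW17a}.
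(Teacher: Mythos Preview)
Your argument is correct and is precisely the standard Littlewood--Paley proof of these smoothing estimates; the paper itself gives no proof but simply cites \cite[Propositions~5 and~6]{MW17a}, where the same block-wise multiplier argument (reducing to $L^1$ kernel bounds on dyadic annuli and then invoking Young's inequality) is carried out. There is nothing to add.
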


\begin{prop}\label{prop:Schauder}
Let $u$ solve the equation (in the mild form)
\begin{align*}
\left\{
\begin{aligned}
\partial_tu(t,x)&=\frac12(\triangle-1)u(t,x)+U(t,x),\quad t>0,\quad x\in\Lambda,\\
u(0,\cdot)&=u_0,\quad x\in\Lambda.
\end{aligned}
\right.
\end{align*}
Let $r\in(1,\infty]$ and define $r'\in[1,\infty)$ by $1/r+1/r'=1$.
Then for any $p,q\in[1,\infty]$, $\theta\in\mathbb{R}$, $\varepsilon>0$, and $\eta\in(0,2/r')$, one has
$$
\|u\|_{L^r([0,T];B_{p,q}^{\theta+2-\varepsilon})\cap C([0,T];B_{p,q}^{\theta+2/r'-\varepsilon})\cap C^{\eta/2}([0,T];B_{p,q}^{\theta+2/r'-\varepsilon-\eta})}
\lesssim\|u_0\|_{B_{p,q}^{\theta+2-\varepsilon}}+\|U\|_{L^r([0,T];B_{p,q}^{\theta})}.
$$
In particular, for $\beta\in(0,1)$ and $\delta\in(0,1-\beta)$, setting $r=p=q=2$, $\theta = -\beta$, $\varepsilon = 1-\beta -\delta$ and $\eta = \delta$, one has
$$
\|u\|_{L^2([0,T];H^{1+\delta})\cap C([0,T];H^{\delta})\cap C^{\delta/2}([0,T];L^2)}
\lesssim\|u_0\|_{H^{1+\delta}}+\|U\|_{L^2([0,T];H^{-\beta})}.
$$
\end{prop}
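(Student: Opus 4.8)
The plan is to work directly from the mild formulation, writing $u=u^{(1)}+u^{(2)}$ with homogeneous part $u^{(1)}_t=e^{\frac12(\triangle-1)t}u_0$ and Duhamel part $u^{(2)}_t=\int_0^t e^{\frac12(\triangle-1)(t-s)}U(s)\,ds$, and estimating each of the three left-hand norms separately for the two pieces. The only analytic inputs are the two semigroup bounds of Proposition~\ref{prop:heatsemigr} (the smoothing bound $\|e^{\frac12(\triangle-1)t}u\|_{B_{p,q}^{s+2\delta}}\lesssim t^{-\delta}\|u\|_{B_{p,q}^s}$, call it (i), and the difference bound $\|(e^{\frac12(\triangle-1)t}-1)u\|_{B_{p,q}^{s-2\delta}}\lesssim t^{\delta}\|u\|_{B_{p,q}^s}$, call it (ii)), together with Young's and H\"older's inequalities in the time variable. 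The specialization $(r,p,q)=(2,2,2)$, $\theta=-\beta$, $\varepsilon=1-\beta-\delta$, $\eta=\delta$ is then immediate from $B^s_{2,2}=H^s$ and the arithmetic check that the three target indices become $1+\delta$, $\delta$, and $0$.

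For the homogeneous part, Proposition~\ref{prop:heatsemigr}(i) with smoothing index $0$ gives $\|u^{(1)}_t\|_{B_{p,q}^{\theta+2-\varepsilon}}\lesssim\|u_0\|_{B_{p,q}^{\theta+2-\varepsilon}}$ uniformly in $t$, which controls the $L^r$-norm and, after the trivial lowering $2/r'\le 2$, the sup-in-time norm. For the time-H\"older seminorm I would write $u^{(1)}_t-u^{(1)}_s=(e^{\frac12(\triangle-1)(t-s)}-1)e^{\frac12(\triangle-1)s}u_0$ and apply Proposition~\ref{prop:heatsemigr}(ii) with smoothing index $\eta/2$, which loses exactly $\eta$ derivatives and produces the factor $|t-s|^{\eta/2}$, the remaining semigroup being bounded on $B_{p,q}^{\theta+2/r'-\varepsilon}$.

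The Duhamel term carries the real content. For the $L^r$-in-time, $B^{\theta+2-\varepsilon}_{p,q}$-in-space bound I would apply Proposition~\ref{prop:heatsemigr}(i) with smoothing index $1-\varepsilon/2$ under the integral, obtaining the scalar convolution bound
$$
\|u^{(2)}_t\|_{B_{p,q}^{\theta+2-\varepsilon}}\lesssim\int_0^t(t-s)^{-1+\varepsilon/2}\|U(s)\|_{B_{p,q}^{\theta}}\,ds,
$$
after which Young's convolution inequality closes the estimate because the kernel $t^{-1+\varepsilon/2}$ is integrable on $[0,T]$ precisely when $\varepsilon>0$. For the sup-in-time bound at regularity $\theta+2/r'-\varepsilon$ I would instead use smoothing index $1/r'-\varepsilon/2$ and H\"older's inequality with exponents $r,r'$; the resulting integral $\int_0^t(t-s)^{-(1-r'\varepsilon/2)}\,ds$ is finite uniformly in $t$ exactly when $r>1$ (so $r'<\infty$) and $\varepsilon>0$, with continuity in $t$ following by dominated convergence.

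The main obstacle is the time-H\"older bound for $u^{(2)}$ at regularity $\theta+2/r'-\varepsilon-\eta$, where I would use the standard near-diagonal/far-diagonal splitting
$$
u^{(2)}_t-u^{(2)}_s=\int_s^t e^{\frac12(\triangle-1)(t-\tau)}U(\tau)\,d\tau+(e^{\frac12(\triangle-1)(t-s)}-1)\,u^{(2)}_s.
$$
The second term is dispatched by Proposition~\ref{prop:heatsemigr}(ii) with index $\eta/2$ applied to $u^{(2)}_s$, whose $B^{\theta+2/r'-\varepsilon}_{p,q}$-norm was just bounded uniformly. The first is delicate: when $2/r'-\varepsilon-\eta\ge0$, applying Proposition~\ref{prop:heatsemigr}(i) with index $\tfrac12(2/r'-\varepsilon-\eta)$ and then H\"older over $[s,t]$ yields the factor $|t-s|^{(\varepsilon+\eta)/2}$, while in the opposite regime one discards the smoothing, uses the Besov embedding into the lower-order space, and bounds by $|t-s|^{1/r'}$ via H\"older. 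In both regimes the exponent dominates $\eta/2$ exactly because of the hypothesis $\eta<2/r'$, which is the sharp condition making the argument work; I expect the delicate bookkeeping to lie in verifying that all these exponents remain integrable across the two regimes and that the implicit constants stay uniform on $[0,T]$.
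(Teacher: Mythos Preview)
Your proposal is correct and follows essentially the same route as the paper: the same mild-form splitting $u=u^{(1)}+u^{(2)}$, the same use of Proposition~\ref{prop:heatsemigr}(i) with Young (resp.\ H\"older) for the $L^r$ (resp.\ $L^\infty$) bound, and the same near/far-diagonal decomposition of the increment for the $C^{\eta/2}$ bound. The only cosmetic difference is that the paper applies the increment splitting directly to the full solution $u$ (writing $u_t-u_s=(e^{\frac12(\triangle-1)(t-s)}-1)u_s+\int_s^t e^{\frac12(\triangle-1)(t-v)}U_v\,dv$ and invoking the just-proved $L^\infty$ bound on $u_s$), whereas you treat $u^{(1)}$ and $u^{(2)}$ separately; and the paper does not explicitly separate the two regimes $2/r'-\varepsilon-\eta\gtrless 0$ for the near-diagonal integral, simply writing the smoothing bound uniformly (your explicit case distinction is in fact cleaner here, since Proposition~\ref{prop:heatsemigr}(i) literally requires a nonnegative index).
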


\begin{proof}
We decompose
\begin{align*}
&u_t=e^{\frac12t(\triangle-1)}u_0+\int_0^te^{\frac12(t-s)(\triangle-1)}U_sds
=:u_t^0+u_t^1,\\
&u_t-u_s=(e^{\frac12(t-s)(\triangle-1)}-1)u_s+\int_s^te^{\frac12(t-v)(\triangle-1)}U_vdv
=:u_{ts}^0+u_{ts}^1.
\end{align*}
(1) {\bf Bound in $L^r([0,T];B_{p,q}^{\theta+2-\varepsilon})$.}
By Proposition \ref{prop:heatsemigr}-(i),
\begin{align*}
\|u_t^0\|_{B_{p,q}^{\theta+2-\varepsilon}}
\lesssim\|u_0\|_{B_{p,q}^{\theta+2-\varepsilon}},\quad
\|u_t^1\|_{B_{p,q}^{\theta+2-\varepsilon}}
\lesssim\int_0^t(t-s)^{-\frac{2-\varepsilon}2}\|U_s\|_{B_{p,q}^{\theta}}ds.
\end{align*}
By Young's inequality,
\begin{align*}
\|u^1\|_{L^r([0,T];B_{p,q}^{\theta+2-\varepsilon})}
\lesssim\|t\mapsto t^{-\frac{2-\varepsilon}2}\|_{L^1([0,T])}
\|U\|_{L^r([0,T];B_{p,q}^{\theta})}
\lesssim\|U\|_{L^r([0,T];B_{p,q}^{\theta})}.
\end{align*}
(2) {\bf Bound in $L^\infty([0,T];B_{p,q}^{\theta+2/r'-\varepsilon})$.}
By Proposition \ref{prop:heatsemigr}-(i),
\begin{align*}
\|u_t^1\|_{B_{p,q}^{\theta+2/r'-\varepsilon}}
\lesssim\int_0^t(t-s)^{-(\frac1{r'}-\frac\varepsilon2)}\|U_s\|_{B_{p,q}^{\theta}}ds.
\end{align*}
By Young's inequality,
\begin{align*}
\|u^1\|_{L^\infty([0,T];H^{\theta+2-2/r-\varepsilon})}
\lesssim\|t\mapsto t^{-(\frac1{r'}-\frac\varepsilon2)}\|_{L^{r'}([0,T])}
\|U\|_{L^r([0,T];B_{p,q}^{\theta})}
\lesssim\|U\|_{L^r([0,T];B_{p,q}^{\theta})}.
\end{align*}
(3) {\bf Bound in $C^{\eta/2}([0,T];B_{p,q}^{\theta+2/r'-\varepsilon-\eta})$.}
By Proposition \ref{prop:heatsemigr}-(ii) and the bound in $L^\infty([0,T];B_{p,q}^{\theta+2/r'-\varepsilon})$,
\begin{align*}
\|u_{ts}^0\|_{B_{p,q}^{\theta+2/r'-\varepsilon-\eta}}
\lesssim(t-s)^{\eta/2}\|u_s\|_{B_{p,q}^{\theta+2/r'-\varepsilon}}
\lesssim(t-s)^{\eta/2}\|U\|_{L^r([0,T];B_{p,q}^{\theta})},
\end{align*}
and by Proposition \ref{prop:heatsemigr}-(i),
\begin{align*}
\|u_{ts}^1\|_{B_{p,q}^{\theta+2/r'-\varepsilon-\eta}}
&\lesssim\int_s^t(t-v)^{-(\frac1{r'}-\frac\varepsilon2-\frac\eta2)}\|U_v\|_{B_{p,q}^{\theta}}dv\\
&\lesssim\left(\int_s^t(t-v)^{-1+(\varepsilon+\eta)\frac{r'}{2}}dv\right)^{\frac1{r'}}\left(\int_s^t\|U_v\|_{B_{p,q}^{\theta}}^rdv\right)^{\frac1r}\\
&\lesssim(t-s)^{(\varepsilon+\eta)/2}\|U\|_{L^r([0,T];B_{p,q}^{\theta})}.
\end{align*}
Then $u\in C^{\eta/2}([0,T];B_{p,q}^{\theta+2/r'-\varepsilon-\eta})$ implies $u\in C([0,T];B_{p,q}^{\theta+2/r'-\varepsilon-\eta})$. Since $\eta,\varepsilon>0$ are arbitrary small, one has $u\in C([0,T];B_{p,q}^{\theta+2/r'-\varepsilon})$ for any $\varepsilon>0$.
\end{proof}


\end{document}